\newcommand{\myname}{\textbf{Stefan Waldmann}}
\newcommand{\myemail}{\texttt{stefan.waldmann@mathematik.uni-wuerzburg.de}}
\newcommand{\myaddress}{Julius Maximilian University of Würzburg \\
     Department of Mathematics \\
     Chair of Mathematics X (Mathematical Physics) \\
     Emil-Fischer-Straße 31 \\
     97074 Würzburg \\
     Germany}
\newcommand{\AuthorEmailOne}{\texttt{{marvin.dippell@mathematik.uni-wuerzburg.de}}}
\newcommand{\AuthorEmailTwo}{\texttt{felix.menke@mathematik.uni-wuerzburg.de}}
\newcommand{\AuthorEmailThree}{\texttt{\myemail}}
\newcommand{\AuthorOne}{\textbf{Marvin Dippell}}
\newcommand{\AuthorTwo}{\textbf{Felix Menke}}
\newcommand{\AuthorThree}{\textbf{\myname}}
\author{\AuthorOne\thanks{\AuthorEmailOne},
        \addtocounter{footnote}{2}
        \AuthorTwo\thanks{\AuthorEmailTwo},
        \textbf{and}
        \AuthorThree\thanks{\AuthorEmailThree}\\[0.2cm]
        \AuthorAddressThree
        \\[0.5cm]
}
\newcommand{\AuthorAddressThree}{\myaddress}
\newcommand{\Transport}{\mathrm{T}}
\newcommand{\Proj}{\categoryname{Proj}}
\newcommand{\VectTriple}{\categoryname{Vect_3}}
\newcommand{\VBQuad}[1]{\underline{#1}}
\newcommand{\cCoisoModTriple}[1]{\categoryname{C_3Mod(}#1\categoryname{)}}
\newcommand{\cModules}[1]{\categoryname{Mod(}#1\categoryname{)}}
\newcommand{\CoisoModTriple}{\categoryname{C_3Mod}}
\newcommand{\CoisoHom}{\categoryname{C_3}\!\operatorname{Hom}}
\newcommand{\coisoField}[1]{{\underline{\field{#1}}}}
\newcommand{\CoisoAlgTriple}{\categoryname{C_3Alg}}
\newcommand{\CoisoSetTriple}{\categoryname{C_3Set}}
\newcommand{\CoisoSetTripleInj}{\categoryname{C_3Set^\mathrm{inj}}}
\newcommand{\SetTriple}{\categoryname{Set_3}}
\newcommand{\SetTripleInj}{\categoryname{Set_3^\mathrm{inj}}}
\newcommand{\Total}{\mathrm{tot}}
\newcommand{\Wobs}{\mathrm{N}}
\newcommand{\Null}{\mathrm{0}}
\title{A Serre-Swan Theorem for Coisotropic Algebras}
\date{December 2020}
\begin{document}

\selectlanguage{english}

%
%

\maketitle

%
%

\begin{abstract}
    Coisotropic algebras are used to formalize coisotropic reduction
    in Poisson geometry as well as in deformation quantization and
    find applications in various other fields as well. In this paper
    we prove a Serre-Swan Theorem relating the regular projective
    modules over the coisotropic algebra built out of a manifold $M$,
    a submanifold $C$ and an integrable smooth distribution
    $D \subseteq TC$ with vector bundles over this geometric
    situation and show an equivalence of categories for the case of
    a simple distribution.
\end{abstract}

%
%

\tableofcontents
\newpage

%
%

\section{Introduction}
\label{sec:Introduction}

In Poisson geometry, coisotropic reduction is one of the common
reduction schemes to construct out of a Poisson manifold $(M, \pi)$
with a coisotropic submanifold $\iota\colon C \to M$ a new Poisson
manifold: being coisotropic means that the Hamiltonian vector fields
$X_f \in \Secinfty(TM)$ are tangent to $C$ for all functions
$f \in \Cinfty(M)$ with $\iota^*f = 0$. Thus they define a smooth, but
in general singular, distribution which turns out to be integrable
nevertheless. The quotient of $C$ with respect to the orbit relation
$\sim$ induced by this distribution then becomes a Poisson manifold
$M_\red = C \big/ \mathord{\sim}$ with Poisson structure $\pi_\red$
whenever it is actually a manifold at all. This construction
generalizes the Marsden-Weinstein reduction at momentum level zero,
where the coisotropic submanifold is the level set of the momentum map
for value zero.

Being omnipresent in Poisson geometry it raises immediately the
question whether and how one can pass to a quantum analogue. In fact,
this question was first posed by Dirac \cite{dirac:1964a} when
discussing the quantization of first class constraints which, in
modern language, are essentially coisotropic submanifolds. The physical
necessity of understanding such constraint systems comes from quantum
field theoretical models whenever gauge degrees of freedom are
involved.

Within deformation quantization \cite{bayen.et.al:1978a} many general
results on the quantization of coisotropic submanifolds have been
obtained, most notably by Bordemann \cite{bordemann:2005a} for the
global aspects in the symplectic case and by Cattaneo and Felder
\cite{cattaneo.felder:2004a, cattaneo.felder:2007a} for the general
Poisson case. The principal idea is to find a star product $\star$ on
$(M, \pi)$ in such a way that the classical vanishing ideal
$\algebra{J}_C$ of the constraint surface $C$ becomes or is deformed
into a left ideal inside $\Cinfty(M)\formal{\lambda}$ with respect to
$\star$. Moreover, one wants the classical Lie normalizer of
$\algebra{J}_C$ with respect to the Poisson bracket to be deformed
into the Lie normalizer with respect to the star product
commutator. Then the quotient algebra of this normalizer by the
(therein two-sided associative) ideal $\algebra{J}_C$ should provide a
quantization of the reduced phase space $(M_\red, \pi_\red)$. In
simple enough geometric situations this program is shown to be
successful in the above references.

Slightly more general and beyond the original Poisson geometric
motivation is the situation of a manifold $M$ with a submanifold $C$
being equipped with a possibly singular but integrable smooth
distribution $D \subseteq TC$. This will be our geometric framework in
this paper. In particular, in this case one can still consider the orbit
space $M_\red = C / D$ whenever it turns out to be a manifold again.

In \cite{dippell.esposito.waldmann:2019a,
  dippell.esposito.waldmann:2020a} a more algebraic and conceptual
approach was proposed: the notion of coisotropic (triples of) algebras
is designed to provide an algebraic formulation for the above
reduction scheme. One considers a total algebra $\algebra{A}_\Total$
together with another algebra $\algebra{A}_\Wobs$ and an algebra
homomorphism $\iota\colon \algebra{A}_\Wobs \to
\algebra{A}_\Total$. Finally, we require a two-sided ideal
$\algebra{A}_\Null \subseteq \algebra{A}_\Wobs$. Then such triples
$\algebra{A} = (\algebra{A}_\Total, \algebra{A}_\Wobs,
\algebra{A}_\Null)$ of algebras are suitable to encode both the
classical situation as well as the quantized version. They always
admit a reduction given by the quotient
$\algebra{A}_\red = \algebra{A}_\Wobs / \algebra{A}_\Null$. In the
previous works a good category of bimodules over such triples of
algebras was studied in detail allowing for Morita theory.  In
addition, their deformation theory was discussed, including a first
investigation of the relevant Hochschild cohomologies.

In our geometric setting, we can associate a coisotropic algebra
$\Cinfty(M, C, D)$ by taking $\Cinfty(M)$ as $\Total$-component, the
functions on $M$ whose restrictions to $C$ are constant in direction
of $D$ as $\Wobs$-component and the vanishing ideal of $C$ as
$\Null$-component. If the quotient $C / D$ is a manifold, the reduced
algebra $\Cinfty(M, C, D)_\red$ becomes isomorphic to
$\Cinfty(M_\red)$.

While \cite{dippell.esposito.waldmann:2019a,
  dippell.esposito.waldmann:2020a} focus on general algebraic features,
the aim of this paper is to give a first contact to the underlying
geometry for a particular class of coisotropic algebras by formulating
a Serre-Swan Theorem in this context.

Recall that the classical Serre-Swan Theorem for smooth manifolds
identifies vector bundles over manifolds with finitely generated
projective modules over the algebra of smooth functions.  Thus, we
would like to understand vector bundles over manifolds that are
equipped with a distribution on a submanifold by comparing them to
certain projective modules over the corresponding coisotropic algebra
$\Cinfty(M, C, D)$.

To achieve such a theorem we first have to specify what
\emph{projective} module over a coisotropic algebra should really
mean: this is complicated by the fact that the module category of a
coisotropic algebra is \emph{not} an abelian category. Hence the usual
standard definition has to be interpreted in the correct way. We base
our definition on a notion of free modules and a splitting
property with respect to \emph{regular} epimorphisms instead of all
epimorphisms. This seems to be the most reasonable choice for a
category $\Proj(\algebra{A})$ of projective modules over a coisotropic
algebra $\algebra{A}$.

On the geometric side, we investigate vector bundles $E_\Total \to M$
with a specified subbundle $E_\Wobs \to M$ and a subbundle
$E_\Null \subseteq \iota^\sharp E_\Wobs \to C$ inside the pull-back
bundle on the submanifold $C$. Moreover, we need a holonomy-free
$D$-connection for the vector bundle $\iota^\sharp E_\Wobs$ preserving
the subbundle $E_\Null$. With an obvious notion of morphisms such
$(E_\Total, E_\Wobs, E_\Null, \nabla)$ form a category
$\VectTriple(M, C, D)$.

Taking sections of such $(E_\Total, E_\Wobs, E_\Null, \nabla)$ then
yields a module over the corresponding coisotropic algebra
$\Cinfty(M, C, D)$.  However, without additional assumptions it will
be hard to control whether this module is indeed projective.

Conversely, if we have a finitely generated regular projective
module over $\Cinfty(M, C, D)$, then we can construct
$(E_\Total, E_\Wobs, E_\Null, \nabla)$ whose sections reproduce the
module we started with: here we do not yet need any additional
assumptions about $D$.

The final Serre-Swan Theorem we state assumes that the quotient
$M_\red = C/D$ is a manifold, i.e. $D$ is a simple distribution. Then
the functor of taking sections yields the equivalence of categories
\begin{equation}
    \label{eq:SerreSwan}
    \Secinfty\colon
    \VectTriple(M, C, D)
    \to
    \Proj(\Cinfty(M, C, D)).
\end{equation}
We finally show that reduction of modules matches the geometric
reduction of vector bundles in a functorial way.

The Serre-Swan Theorem raises several questions some of which will be
pursued in future works:
\begin{itemize}
\item Having a meaningful definition of projective modules one can
    start investigating the algebraic $K$-theory of coisotropic
    algebras, both in the commutative case but also in general. The
    Serre-Swan Theorem then provides a passage to geometric $K$-theory
    adapted to submanifolds equipped with an integrable
    distribution. In this context it would be interesting to
    understand the deformation quantization of projective modules, see
    also \cite{bursztyn.waldmann:2000b}.
\item It would be interesting to understand in which scenarios of
    non-simple distributions the equivalence \eqref{eq:SerreSwan}
    still holds and when it ultimately breaks down.
\item Coisotropic algebras arise in other contexts in differential
    geometry as well: one particularly interesting situation would be
    a submanifold $C \subseteq M$ in an ambient manifold with a group
    action of a discrete group $G$ on $C$. Then the $\Wobs$-component
    would be smooth functions on $M$ whose restrictions to $C$ are
    $G$-invariant and the $\Null$-component is still the vanishing
    ideal.  In this situation, one can also try to find an analogue of
    the above Serre-Swan Theorem.
\end{itemize}

%
%

\noindent
\textbf{Acknowledgements:} It is a pleasure to thank Chiara Esposito
for various stimulating discussions.

%
%

\section{Coisotropic Structures}
\label{sec:CoisotropicStructures}

In order to formulate a coisotropic version of the Serre-Swan Theorem
we first need to find a well-behaved notion of projective coisotropic
module.  To this end we will first recall the definitions and some
properties of coisotropic modules and algebras, before studying
coisotropic index sets and the free coisotropic modules generated by
such sets.  This will finally allow us to find a reasonable definition
of projective coisotropic modules and prove many characterizations
analogous to the classical case, such as a dual basis lemma and the
equivalence to direct summands of free modules.  In the following,
$\field{k}$ denotes a commutative unital ring.

%
%

\subsection{Coisotropic Algebras and their Modules}
\label{sec:CoisotropicAlgebrasAndModules}

In this preliminary section we introduce coisotropic algebras and
their modules following \cite{dippell.esposito.waldmann:2019a,
  dippell.esposito.waldmann:2020a}.  For this we will first need to
consider coisotropic $\field{k}$-modules as the fundamental algebraic
structure underlying coisotropic algebras and their modules.
\begin{definition}[Coisotropic $\field{k}$-modules]
    \label{definition:CoisoTriplesModules}%
    \begin{definitionlist}
    \item \label{item:CoisoTripleModule} A triple
        $\module{E} = (\module{E}_\Total, \module{E}_\Wobs,
        \module{E}_\Null)$ of $\field{k}$-modules together with a
        module homomorphism
        $\iota_\module{E} \colon \module{E}_\Wobs \longrightarrow
        \module{E}_\Total$ is called a \emph{coisotropic
          $\field{k}$-module} if
        $\module{E}_\Null \subseteq \module{E}_\Wobs$ is a sub-module.
    \item \label{item:CoisoTripleModuleMorphism} A morphism
        $\Phi\colon \module{E} \longrightarrow \module{F}$ between
        coisotropic $\field{k}$-modules is a pair
        $(\Phi_\Total, \Phi_\Wobs)$ of module homomorphisms
        $\Phi_\Total \colon \module{E}_\Total \longrightarrow
        \module{F}_\Total$ and
        $\Phi_\Wobs\colon \module{E}_\Wobs \longrightarrow
        \module{F}_\Wobs$ such that
        $\Phi_\Total \circ \iota_\module{E} = \iota_{\module{F}} \circ
        \Phi_\Wobs$ and
        $\Phi_\Wobs(\module{E}_\Null) \subseteq \module{F}_\Null$.
    \item The category of coisotropic $\field{k}$-modules is denoted
        by $\CoisoModTriple$.
    \end{definitionlist}
\end{definition}
Note that we suppress the ring $\field{k}$ in our notation.

We now collect some categorical properties of coisotropic modules.
All of the following can be proved by directly checking the
categorical properties, see e.g. \cite{maclane:1998a}.  For this let
$\module{E}$, $\module{F}$ be coisotropic modules and let
$\Phi, \Psi \colon \module{E} \to \module{F}$ be morphisms of
coisotropic modules.
\begin{remarklist}[label=\alph*)]
\item The morphism $\Phi$ is a monomorphism iff $\Phi_\Total$ and
    $\Phi_\Wobs$ are injective module homomorphisms.
\item The morphism $\Phi$ is an epimorphism iff $\Phi_\Total$ and
    $\Phi_\Wobs$ are surjective module homomorphisms.
\item The morphism $\Phi$ is a regular monomorphism iff it is a
    monomorphism with
    $\Phi_\Wobs^{-1}(\module{F}_\Null) = \module{E}_\Null$.
\item The morphism $\Phi$ is a regular epimorphism iff it is an
    epimorphism with $\Phi_\Wobs(\module{E}_\Null) = \module{F}_\Null$.
\end{remarklist}

Note that monomorphisms (epimorphisms) in
$\CoisoModTriple$ do in general not agree with
regular monomorphisms (epimorphisms), showing that
$\CoisoModTriple$ is not an abelian category, unlike
the usual categories of modules.
\begin{remarklist}[resume,label=\alph*)]
\item \label{Kernel} The kernel of $\Phi$ is given by the coisotropic
    module
    \begin{equation}
	\ker(\Phi)
        =
	\bigl(\ker(\Phi_\Total),
	\ker(\Phi_\Wobs),
	\ker(\Phi_\Wobs) \cap \module{E}_0\bigr)
    \end{equation}
    and $\iota_{\ker} \colon \ker(\Phi_\Wobs) \to \ker(\Phi_\Total)$
    the morphism induced by $\iota_{\module{E}}$.
\item \label{CoisoImage} The image of $\Phi$ is given by the
    coisotropic module
    \begin{equation}
	\image(\Phi)
        =
        \bigl(
            \image(\Phi_\Total),
            \image(\Phi_\Wobs),
            \image(\Phi_\Wobs\at{\module{E}_\Null})
        \bigr)
    \end{equation}
    and
    $\iota_{\image} \colon \image(\Phi_\Wobs) \to \image(\Phi_\Total)$
    the morphism induced by $\iota_{\module{F}}$.
\item \label{InternalHom} The morphisms $\Hom(\module{E},\module{F})$
    of coisotropic modules over $\field{k}$ form a $\field{k}$-module
    which can be enlarged to a coisotropic $\field{k}$-module by
    defining
    \begin{equation}
        \label{Eq:Hom_Module}
	\begin{split}
            \CoisoHom(\module{E},\module{F})_\Total
            &\coloneqq
            \Hom(\module{E}_\Total, \module{F}_\Total)
            \\
            \CoisoHom(\module{E},\module{F})_\Wobs
            &\coloneqq
            \Hom(\module{E},\module{F})
            \\
            \CoisoHom(\module{E},\module{F})_\Null
            &\coloneqq
            \left\{
                (\Phi_\Total, \Phi_\Wobs)
                \in
                \Hom(\module{E},\module{F})
                \mid
                \Phi_\Wobs(\module{E}_\Wobs) \subseteq \module{F}_\Null
            \right\},
	\end{split}
    \end{equation}
    where
    $\iota \colon \Hom(\module{E},\module{F}) \to
    \Hom(\module{E}_\Total, \module{F}_\Total)$ is the projection onto
    the first component.
\end{remarklist}
Using the image every morphism of coisotropic modules can be
factorized into a regular epimorphism and a monomorphism.
\begin{remarklist}[resume,label=\alph*)]
\item The coproduct of $\module{E}$ and $\module{F}$ is given by
    \begin{equation}
	\module{E} \oplus \module{F}
	=
        \left(
            \module{E}_\Total \oplus \module{F}_\Total,
            \;
            \module{E}_\Wobs \oplus \module{F}_\Wobs,
            \;
            \module{E}_\Null \oplus \module{F}_\Null
        \right)
    \end{equation}
    with $\iota_\oplus = \iota_\module{E} + \iota_\module{F}$. It is
    also called the \emph{direct sum} of $\module{E}$ and
    $\module{F}$.
\end{remarklist}
It should be clear that also infinite direct sums can be defined this
way.

The definition of coisotropic modules allows us to reinterpret several
(geometric) reduction procedures in a completely algebraic fashion, as
stated in the following straightforward proposition.
\begin{proposition}[Reduction]
    \label{prop:Reduction}%
    Mapping a coisotropic module $\module{E}$ to the quotient
    $\module{E}_\red = \module{E}_\Wobs / \module{E}_\Null$ and
    morphisms of coisotropic modules to the induced morphisms yields a
    functor
    \begin{equation}
        \red\colon
        \CoisoModTriple \to \Modules,
    \end{equation}
    to the category $\Modules$ of $\field{k}$-modules.
\end{proposition}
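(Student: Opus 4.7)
The plan is to verify the three defining properties of a functor: well-definedness on objects, well-definedness on morphisms, and compatibility with identities and composition. Since on objects $\red$ is just the ordinary quotient $\module{E}_\Wobs / \module{E}_\Null$, which makes sense because $\module{E}_\Null \subseteq \module{E}_\Wobs$ is a sub-module by \ref{item:CoisoTripleModule}, this part is immediate.

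The main content lies in the definition of $\red$ on morphisms. Given a morphism $\Phi = (\Phi_\Total, \Phi_\Wobs) \colon \module{E} \to \module{F}$, I would define $\Phi_\red$ as the map induced by $\Phi_\Wobs$ on the quotients. To show this is well-defined I would invoke the universal property of the quotient: by \ref{item:CoisoTripleModuleMorphism} we have $\Phi_\Wobs(\module{E}_\Null) \subseteq \module{F}_\Null$, so the composition $\pi_\module{F} \circ \Phi_\Wobs \colon \module{E}_\Wobs \to \module{F}_\red$ factors uniquely through the projection $\pi_\module{E} \colon \module{E}_\Wobs \to \module{E}_\red$, yielding the desired $\field{k}$-linear map $\Phi_\red \colon \module{E}_\red \to \module{F}_\red$ characterized by $\Phi_\red \circ \pi_\module{E} = \pi_\module{F} \circ \Phi_\Wobs$.

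The functoriality then follows from this universal characterization. For identities, the unique map satisfying $\id_\red \circ \pi_\module{E} = \pi_\module{E} \circ \id_{\module{E}_\Wobs}$ is plainly $\id_{\module{E}_\red}$. For composition, given $\Phi \colon \module{E} \to \module{F}$ and $\Psi \colon \module{F} \to \module{G}$, both $(\Psi \circ \Phi)_\red$ and $\Psi_\red \circ \Phi_\red$ satisfy the defining equation $f \circ \pi_\module{E} = \pi_\module{G} \circ (\Psi_\Wobs \circ \Phi_\Wobs)$, so uniqueness of the factorization forces them to agree.

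No step here presents a genuine obstacle; the argument is essentially bookkeeping with quotients of $\field{k}$-modules, and the only substantive observation is that the condition $\Phi_\Wobs(\module{E}_\Null) \subseteq \module{F}_\Null$ built into the definition of a coisotropic morphism is precisely what makes $\Phi_\red$ well-defined. I would present the proof in a few lines, emphasizing this point.
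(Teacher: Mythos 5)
Your proof is correct and follows exactly the argument the paper has in mind: the paper states this as a straightforward proposition without written proof, and the content is precisely your observation that the condition $\Phi_\Wobs(\module{E}_\Null) \subseteq \module{F}_\Null$ in the definition of a coisotropic morphism makes the induced map on quotients well-defined, with functoriality following from the universal property of the quotient. Nothing is missing.
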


The coisotropic modules that arise in geometric examples will be
equipped with an additional multiplicative structure.  This is
captured by the following definition.
\begin{definition}[Coisotropic algebra]
    \label{def:CoisotropicAlgebras}%
    \begin{definitionlist}
    \item A \emph{coisotropic algebra} is a triple
        $\algebra{A} = (\algebra{A}_\Total, \algebra{A}_\Wobs,
        \algebra{A}_\Null)$ consisting of unital associative
        $\field{k}$-algebras $\algebra{A}_\Total$, $\algebra{A}_\Wobs$
        and a two-sided ideal
        $\algebra{A}_\Null \subseteq \algebra{A}_\Wobs$ together with
        a unital algebra homomorphism
        $\iota \colon \algebra{A}_\Wobs \to \algebra{A}_\Total$.
    \item A morphism of coisotropic algebras $\algebra{A}$ and
        $\algebra{B}$ is given by a pair of unital algebra
        homomorphisms
        $\Phi_\Total \colon \algebra{A}_\Total \to \algebra{B}_\Total$
        and
        $\Phi_\Wobs \colon \algebra{A}_\Wobs \to \algebra{B}_\Wobs$
        such that
        $\iota_\algebra{B} \circ \Phi_\Wobs = \Phi_\Total \circ
        \iota_\algebra{A}$ and
        $\Phi_\Wobs(\algebra{A}_\Null) \subseteq \algebra{B}_\Null$.
    \item The category of coisotropic algebras is denoted by
        $\CoisoAlgTriple$.
    \end{definitionlist}
\end{definition}
Note that $\CoisoAlgTriple$ forms indeed a category. A coisotropic
algebra $\algebra{A}$ is called \emph{commutative} if
$\algebra{A}_\Total$ and $\algebra{A}_\Wobs$ are commutative algebras.
The reduction functor of \autoref{prop:Reduction} clearly induces a
functor $\red\colon \CoisoAlgTriple \to \Algebras$ with the reduced
algebra given by
$\algebra{A}_\red = \algebra{A}_\Wobs / \algebra{A}_\Null$.
\begin{example}
    \label{ex:CoisotropicAlgebras}%
    From differential geometry we obtain the following two basic
    examples:
    \begin{examplelist}
    \item \label{ex:CoisotropicAlgebras_1} Let $\iota \colon C \to M$
        be a submanifold of a manifold $M$ and let $D \subseteq TC$ be
        an integrable distribution on $C$.  We denote the functions on
        $C$ constant along the leaves of $D$ by $\Cinfty_D(C)$.  Then
        \begin{equation}
            \Cinfty(M,C,D)
            \coloneqq
            (\Cinfty(M), \Cinfty_D(M), \algebra{J}_C),
        \end{equation}
        with
        \begin{equation}
            \Cinfty_D(M)
            \coloneqq
            \lbrace f \in \Cinfty(M) \mid \iota^* f \in \Cinfty_D(C) \rbrace
        \end{equation}
        and the vanishing ideal
        \begin{equation}
            \algebra{J}_C
            \coloneqq
            \lbrace f \in \Cinfty(M) \mid \iota^* f = 0 \rbrace
        \end{equation}
        is a commutative coisotropic algebra.  As soon as the leaf
        space $C/D$ carries a canonical manifold structure we have
        $\Cinfty(M,C,D)_\red = \Cinfty(C/D)$.  Note the slight abuse of
        notation: $\Cinfty_D(C)$ denotes the algebra of functions on
        $C$ which are constant along the distribution on $C$, whereas
        $\Cinfty_D(M)$ denotes the algebra of functions on $M$ which
        are constant along the distribution only on the submanifold
        $C$.
    \item \label{ex:CoisotropicAlgebras_2} Let $(M,\pi)$ be a Poisson
        manifold together with a coisotropic submanifold
        $\iota\colon C \to M$.  Then
        $\algebra{A} = (\Cinfty(M), \Cinfty_D(M), \algebra{J}_C)$ is a
        commutative coisotropic algebra and
        $\algebra{A}_\red \simeq \Cinfty_D(M) / \algebra{J}_C$ is even
        a Poisson algebra.
    \end{examplelist}
\end{example}

Having these examples in mind we want to understand vector bundles
that are compatible with the submanifold and the distribution on it.
On the algebraic side this will be captured by the following notion of
module over a coisotropic algebra.
\begin{definition}[Right module over coisotropic algebra]
    \label{def:CoisoModules}%
    Let $\algebra{A} \in \CoisoAlgTriple$ be a coisotropic algebra.
    \begin{definitionlist}
    \item \label{item:CoisoTripleBimodule} A triple
        $\module{E} = (\module{E}_\Total, \module{E}_\Wobs,
        \module{E}_\Null)$ consisting of a right
        $\algebra{A}_\Total$-module $\module{E}_\Total$ and right
        $\algebra{A}_\Wobs$-modules $\module{E}_\Wobs$ and
        $\module{E}_\Null$ together with a module morphism
        $\iota_\module{E} \colon \module{E}_\Wobs \longrightarrow
        \module{E}_\Total$ along the morphism
        $\iota_\algebra{A} \colon \algebra{A}_\Wobs \to
        \algebra{A}_\Total$ is called a \emph{coisotropic right
          $\algebra{A}$-module} if
        $\module{E}_\Null \subseteq \module{E}_\Wobs$ is a sub-module
        such that
        \begin{equation}
            \label{eq:ModuleTripleCondition}
            \module{E}_\Wobs \cdot \algebra{A}_\Null
            \subseteq
            \module{E}_\Null.
        \end{equation}
    \item \label{item:CoisoTripleBimoduleMorphism} A morphism
        $\Phi\colon \module{E} \longrightarrow \module{F}$ between
        coisotropic right $\algebra{A}$-modules is a pair
        $(\Phi_\Total, \Phi_\Wobs)$ consisting of an
        $\algebra{A}_\Total$-module morphism
        $\Phi_\Total \colon \module{E}_\Total \longrightarrow
        \module{F}_\Total$ and an $\algebra{A}_\Wobs$-module morphism
        $\Phi\colon \module{E}_\Wobs \longrightarrow \module{F}_\Wobs$
        such that
        $\Phi_\Total \circ \iota_\module{E} = \iota_{\module{F}} \circ
        \Phi_\Wobs$ and
        $\Phi_\Wobs(\module{E}_\Null) \subseteq \module{F}_\Null$.
    \item The category of coisotropic right $\algebra{A}$-modules is
        denoted by $\cCoisoModTriple{\algebra{A}}$.
    \end{definitionlist}
\end{definition}
There is an obvious notion of left modules and bimodules over
coisotropic algebras, see \cite{dippell.esposito.waldmann:2019a}, but
in the following we will only need right modules.  Therefore we will
shorten our notation and just say module instead of right module from
now on.  If we consider the coisotropic algebra
$\coisoField{k} = (\field{k}, \field{k}, 0)$, then modules over the
coisotropic algebra $\coisoField{k}$ agree with coisotropic
$\field{k}$-modules as introduced in
\autoref{definition:CoisoTriplesModules}, i.e.
$\cCoisoModTriple{\coisoField{k}} = \CoisoModTriple$.
\begin{example}
    Let $C \subseteq M$ be a submanifold and let $D \subseteq TC$ be
    an integrable distribution on $C$.  Let, moreover, $E \to M$ be a
    vector bundle over $M$ and $\nabla$ a covariant derivative on $E$.
    Then setting $\module{E}_\Total = \Secinfty(E)$,
    \begin{equation}
        \label{eq:ENormal}
        \module{E}_\Wobs
        =
        \left\{
            s \in \Secinfty(E)
            \; \big| \;
            \nabla_X s\at{C} = 0
            \text{ for all }
            X \in \Secinfty(TM)
            \textrm{ with }
            X\at{C} \in D
        \right\}
    \end{equation}
    and $\module{E}_\Null = \{ s \in \Secinfty(E) \mid s\at{C} = 0 \}$
    defines a coisotropic $\algebra{A}$-module $\module{E}$ for
    $\algebra{A} = (\Cinfty(M), \Cinfty_D(M), \algebra{J}_C)$ as in
    \autoref{ex:CoisotropicAlgebras}, \ref{ex:CoisotropicAlgebras_1}.
    Note that the construction of $\module{E}_\Wobs$ strongly depends
    on the choice of the covariant derivative.
\end{example}

Clearly, the quotient $\module{E}_\Wobs / \module{E}_\Null$ is an
$\algebra{A}_\red$-module for any $\algebra{A}$-module $\module{E}$.
Thus, we can easily extend the reduction of \autoref{prop:Reduction}
to the case of modules over coisotropic algebras by constructing the
functor
$\red \colon \cCoisoModTriple{\algebra{A}} \to
\cModules{\algebra{A}_\red}$ using
$\module{E}_\red = \module{E}_\Wobs / \module{E}_\Null$.

%
%

\subsection{Coisotropic Index Sets}
\label{sec:CoisotropicIndexSets}

Forgetting all algebraic structure of a given coisotropic module or
coisotropic algebra yields a triple of sets
$(M_\Total,M_\Wobs,M_\Null)$ such that $M_\Null \subseteq M_\Wobs$ and
a map $\iota_M \colon M_\Wobs \to M_\Total$.
\begin{definition}[Coisotropic index set]
    \begin{definitionlist}
    \item A triple $(M_\Total, M_\Wobs, M_\Null)$ of sets with
        $M_\Null \subseteq M_\Wobs$ together with a map
        $\iota_M \colon M_\Wobs \longrightarrow M_\Total$ is called a
        \emph{coisotropic index set}.
    \item A morphism of coisotropic index sets
        $M = (M_\Total, M_\Wobs ,M_\Null)$ and
        $N =(N_\Total, N_\Wobs, N_\Null)$ is a pair
        $(f_\Total, f_\Wobs)$ with
        $f_\Total \colon M_\Total \longrightarrow N_\Total$ and
        $f_\Wobs \colon M_\Wobs \longrightarrow N_\Wobs$ such that
        $f_\Wobs(M_\Null) \subseteq N_\Null$ and
        $f_\Total \circ \iota_M = \iota_N \circ f_\Wobs$.
    \item The category of coisotropic index sets is denoted by
        $\SetTriple$.
    \end{definitionlist}
\end{definition}
\begin{remark}\label{remark:Coisotropic_Sets}
    Instead of keeping the underlying set of the $\Null$-component of
    a given coisotropic module we could also use the equivalence
    relation on the $\Wobs$-component induced by the
    $\Null$-component.  This leads to the notion of coisotropic sets
    as introduced in \cite{dippell.esposito.waldmann:2020a}.  These
    two concepts do not agree, but for our purpose coisotropic index
    sets will be more useful.
\end{remark}

The category $\SetTriple$ inherits a lot of structure from the
category $\Sets$ of sets.  In particular, $\SetTriple$ has all finite
limits and colimits.  But it does not resemble $\Sets$ completely, as
the next result shows.
\begin{proposition}[Mono- and epimorphisms in $\SetTriple$]
    Let $f \colon M \to N$ be a morphism of coisotropic index sets.
    \begin{propositionlist}
    \item The morphism $f$ is a monomorphism iff $f_\Total$
        and $f_\Wobs$ are injective.
    \item The morphism $f$ is an epimorphism iff $f_\Total$
        and $f_\Wobs$ are surjective.
    \item The morphism $f$ is a regular monomorphism iff
        $f_\Total$ and $f_\Wobs$ are injective and
        $f_\Wobs^{-1}(N_\Null) = M_\Null$.
    \item The morphism $f$ is a regular epimorphism iff
        $f_\Total$ and $f_\Wobs$ are surjective and
        $f_\Wobs(M_\Null) = N_\Null$.
    \end{propositionlist}
\end{proposition}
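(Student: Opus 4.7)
The plan is to handle the four items in two natural groups: parts (i)--(ii) follow by standard testing arguments, while parts (iii)--(iv) reduce to an explicit description of equalizers and coequalizers in $\SetTriple$.

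For the monomorphism characterization, I would note that if $f_\Total$ and $f_\Wobs$ are both injective, then from $f \circ g = f \circ h$ one reads off componentwise equalities and concludes $g = h$, so $f$ is mono. Conversely, if $f$ is mono I would test against the coisotropic index set $\underline{1} = (\{*\}, \{*\}, \{*\})$ with $\iota = \id$ to detect failures of injectivity in either $f_\Total$ or $f_\Wobs$: two distinct elements of $M_\Wobs$ with the same image under $f_\Wobs$ give two parallel maps $\underline{1} \to M$ that are distinguished in $M$ but agree after composition with $f$, contradicting monomorphy; an analogous test using $(\{*\}, \emptyset, \emptyset)$ handles $f_\Total$. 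The epimorphism statement (ii) is dual, tested against a two-element target constructed from the complement of the image.

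For (iii)--(iv) I would first spell out (co)equalizers in $\SetTriple$. Given a parallel pair $g, h \colon N \rightrightarrows P$, I claim the equalizer $E$ has
\[
  E_\Total = \{x \in N_\Total \mid g_\Total(x) = h_\Total(x)\}, \quad
  E_\Wobs = \{y \in N_\Wobs \mid g_\Wobs(y) = h_\Wobs(y)\}, \quad
  E_\Null = E_\Wobs \cap N_\Null,
\]
with $\iota_E$ induced by $\iota_N$; universality forces $E_\Null$ to be as large as possible subject to landing in $N_\Null$, which is exactly this intersection. Dually, the coequalizer $Q$ of $g, h \colon P \rightrightarrows N$ is obtained by taking the ordinary Set coequalizers componentwise and setting $Q_\Null = q_\Wobs(N_\Null)$, since this is the minimal subset making $q$ a morphism of coisotropic index sets.

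With these descriptions in hand, the forward direction of (iii) is immediate: any regular mono $f \colon M \to N$ arises as an equalizer, so $f_\Total, f_\Wobs$ are injective and after identifying $M_\Wobs$ with its image we have $M_\Null = f_\Wobs(M_\Wobs) \cap N_\Null = f_\Wobs^{-1}(N_\Null)$. For the converse I would exhibit a parallel pair whose equalizer is $f$: take $P$ to be the pushout of $N \leftarrow M \to N$ in $\SetTriple$ (formed componentwise in $\Sets$, with $P_\Null$ the image of $N_\Null$ under either leg), together with the two canonical maps $\iota_1, \iota_2 \colon N \rightrightarrows P$. The equalizer of $(\iota_1, \iota_2)$ computed above recovers $f_\Total(M_\Total)$ and $f_\Wobs(M_\Wobs)$ in the Total and Wobs slots, and its Null slot is $f_\Wobs(M_\Wobs) \cap N_\Null = f_\Wobs(M_\Null)$ precisely by the hypothesis $M_\Null = f_\Wobs^{-1}(N_\Null)$; hence $f \cong $ equalizer. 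Part (iv) is handled by the dual construction: a regular epi is a coequalizer and hence satisfies the stated conditions by the coequalizer description, and conversely any $f$ as in (iv) is realized as the coequalizer of its kernel pair $M \times_N M \rightrightarrows M$ (formed componentwise). The main subtlety throughout is verifying that the $\Null$-component of the (co)equalizer behaves as claimed, which is where the asymmetry between ordinary and regular mono/epi genuinely lives; once that lemma is pinned down, the four characterizations fall out in parallel.
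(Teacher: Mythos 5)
Your overall strategy is sound and amounts to the direct categorical verification that the paper itself leaves to the reader (the proposition is stated there without proof): parts (i)--(ii) by testing against small objects, parts (iii)--(iv) by computing (co)equalizers explicitly --- with $E_\Null = E_\Wobs \cap N_\Null$ for equalizers and $Q_\Null = q_\Wobs(N_\Null)$ for coequalizers, which is indeed where the difference between plain and regular mono/epi lives --- and then exhibiting a map with the stated properties as the equalizer of its cokernel pair, respectively the coequalizer of its kernel pair. Those computations are correct, including the key identity $\image(f_\Wobs)\cap N_\Null = f_\Wobs(M_\Null)$ under the hypothesis $f_\Wobs^{-1}(N_\Null)=M_\Null$, and the kernel-pair argument for (iv) works because the componentwise surjections are Set-coequalizers of the componentwise kernel pairs while the hypothesis $f_\Wobs(M_\Null)=N_\Null$ matches the $\Null$-component of the coequalizer.

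One step fails as literally written: in the mono test of (i) you use $\underline{1} = (\{*\},\{*\},\{*\})$ to detect non-injectivity of $f_\Wobs$. A morphism $\underline{1} \to M$ must send the $\Null$-point into $M_\Null$, so your parallel pair only exists when the two elements with equal image both lie in $M_\Null$; as stated the argument only yields injectivity of $f_\Wobs$ on $M_\Null$. The fix is trivial: test with $(\{*\},\{*\},\emptyset)$ instead (morphisms out of it are exactly elements of $M_\Wobs$), keeping $(\{*\},\emptyset,\emptyset)$ for $f_\Total$ as you do. Similarly, in (ii) the ``two-element target'' has to be chosen so that the characteristic-function pair really consists of morphisms compatible with the structure maps --- e.g. $(\{0,1\},\{0,1\},\{0,1\})$ with $\iota = \id$ to detect non-surjectivity of $f_\Total$, and a one-point $\Total$-component to detect non-surjectivity of $f_\Wobs$ --- but that is a matter of spelling out details, not of substance.
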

\begin{remark}
    The relationship between the categories $\Sets$ and $\SetTriple$
    can be made more precise: In contrast to $\Sets$ the category
    $\SetTriple$ is not a topos, but only a quasi-topos, since it does
    only admit a weak subobject classifier.  See
    e.g. \cite{wyler:1991a} and \cite{johnstone:2002a} for more on
    quasi-topoi.
\end{remark}

Recall, that in $\Sets$ the axiom of choice can be rephrased by saying
that every epimorphism splits, i.e. for every surjective map
$f \colon M \to N$ there exists a map $g \colon N \to M$ such that
$f \circ g = \id_M$.  The next example shows that the equivalent
statement need not be true in $\SetTriple$.  In particular, not even
every regular epimorphism splits.
\begin{example}
    Let $M = (\{0,1\}, \{0,1\}, \emptyset)$ with $\iota_M$ the
    identity, and $N = (\{0\},\{0,1\},\emptyset)$ with $\iota_N = 0$.
    Then $\Phi = (0, \id_{\{0,1\}})\colon M \to N$ is a regular
    epimorphism.  Suppose there exists a split $\Psi \colon N \to M$,
    then $\Psi_\Wobs = \id_{\{0,1\}}$.  But $\Psi_\Total$ would need
    to fulfill both
    $\Psi_\Total(0) = \Psi_\Total(\iota_N(0)) = \iota_M(\Psi_\Wobs(0))
    = 0$ and
    $\Psi_\Total(0) = \Psi_\Total(\iota_N(1)) = \iota_M(\Psi_\Wobs(1))
    = 1$. Therefore $(\Psi_\Total,\Psi_\Wobs)$ can not be chosen to be
    a morphism of triples of sets.
\end{example}

The next result characterizes completely the coisotropic index sets
for which any regular epimorphism into them splits.
\begin{proposition}
    \label{prop:ProjectiveIndexSets}%
    Let $P \in \SetTriple$ be a coisotropic index set.  Then the
    following statements are equivalent:
    \begin{propositionlist}
    \item\label{prop:ProjectiveIndexSets_1} Every regular epimorphism
        $M \to P$ splits.
    \item\label{prop:ProjectiveIndexSets_2} For every regular
        epimorphism $\Phi \colon M \to N$ and every morphism
        $\Psi \colon P \to N$ there exists a morphism
        $\chi \colon P \to M$ such that $\Phi \circ \chi = \Psi$.
    \item\label{prop:ProjectiveIndexSets_3} The map
        $\iota_P \colon P_\Wobs \to P_\Total$ is injective.
    \end{propositionlist}
\end{proposition}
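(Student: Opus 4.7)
The plan is to prove the equivalences by the cycle \ref{prop:ProjectiveIndexSets_2} $\Rightarrow$ \ref{prop:ProjectiveIndexSets_1} $\Rightarrow$ \ref{prop:ProjectiveIndexSets_3} $\Rightarrow$ \ref{prop:ProjectiveIndexSets_2}. The first implication is routine: apply \ref{prop:ProjectiveIndexSets_2} to $N = P$ and $\Psi = \id_P$, which yields a section of the given regular epimorphism.

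For \ref{prop:ProjectiveIndexSets_1} $\Rightarrow$ \ref{prop:ProjectiveIndexSets_3} I would proceed by contraposition. Assume there exist $x, y \in P_\Wobs$ with $x \neq y$ but $\iota_P(x) = \iota_P(y)$, and construct a regular epimorphism $\Phi \colon M \to P$ admitting no section. The idea is to produce an \emph{injective resolution of $\iota_P$}: set $M_\Wobs \coloneqq P_\Wobs$, $M_\Null \coloneqq P_\Null$, and $M_\Total \coloneqq P_\Wobs \sqcup (P_\Total \setminus \iota_P(P_\Wobs))$ with $\iota_M$ the canonical (injective) inclusion. Define $\Phi_\Wobs \coloneqq \id_{P_\Wobs}$ and $\Phi_\Total$ by $\iota_P$ on $P_\Wobs$ and by the identity on the complement. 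Then $\Phi$ is an epimorphism with $\Phi_\Wobs(M_\Null) = P_\Null$, hence a regular epimorphism. Any section $\Psi$ would force $\Psi_\Wobs = \id_{P_\Wobs}$, and then the compatibility $\Psi_\Total \circ \iota_P = \iota_M \circ \Psi_\Wobs$ evaluated at $x$ and $y$ gives $\Psi_\Total(\iota_P(x)) = x$ and $\Psi_\Total(\iota_P(y)) = y$, contradicting $\iota_P(x) = \iota_P(y)$.

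For \ref{prop:ProjectiveIndexSets_3} $\Rightarrow$ \ref{prop:ProjectiveIndexSets_2}, assume $\iota_P$ is injective and let $\Phi \colon M \to N$ be a regular epimorphism and $\Psi \colon P \to N$ arbitrary. I construct $\chi_\Wobs$ pointwise using the axiom of choice: for $p \in P_\Null$, pick any $\chi_\Wobs(p) \in M_\Null \cap \Phi_\Wobs^{-1}(\Psi_\Wobs(p))$, which is nonempty because $\Phi_\Wobs(M_\Null) = N_\Null$ and $\Psi_\Wobs(p) \in N_\Null$; for $p \in P_\Wobs \setminus P_\Null$ pick any $\chi_\Wobs(p) \in \Phi_\Wobs^{-1}(\Psi_\Wobs(p))$, which is nonempty by surjectivity of $\Phi_\Wobs$. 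By construction $\Phi_\Wobs \circ \chi_\Wobs = \Psi_\Wobs$ and $\chi_\Wobs(P_\Null) \subseteq M_\Null$.

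The main point is the construction of $\chi_\Total$: injectivity of $\iota_P$ forces the definition $\chi_\Total(\iota_P(p)) \coloneqq \iota_M(\chi_\Wobs(p))$ on $\iota_P(P_\Wobs)$, which is well-defined precisely because $\iota_P$ is injective; this is where assumption \ref{prop:ProjectiveIndexSets_3} enters. A short computation using $\Phi_\Total \circ \iota_M = \iota_N \circ \Phi_\Wobs$ and the defining property of $\chi_\Wobs$ shows that on $\iota_P(P_\Wobs)$ one has $\Phi_\Total \circ \chi_\Total = \Psi_\Total$. On the complement $P_\Total \setminus \iota_P(P_\Wobs)$ choose $\chi_\Total$ arbitrarily inside $\Phi_\Total^{-1}(\Psi_\Total(q))$, again nonempty by surjectivity of $\Phi_\Total$. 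The resulting pair $\chi = (\chi_\Total, \chi_\Wobs)$ is a morphism in $\SetTriple$ with $\Phi \circ \chi = \Psi$. The only delicate point is the consistency of $\chi_\Total$ on $\iota_P(P_\Wobs)$, which is exactly what injectivity of $\iota_P$ buys us.
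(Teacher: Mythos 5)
Your proof is correct, but it routes the equivalences differently from the paper. The paper proves \ref{prop:ProjectiveIndexSets_1} $\Rightarrow$ \ref{prop:ProjectiveIndexSets_2} by a categorical pullback argument (constructing $P \times_{\Psi,\Phi} M$, observing that $\pr_P$ is again a regular epimorphism, and splitting it), then \ref{prop:ProjectiveIndexSets_2} $\Rightarrow$ \ref{prop:ProjectiveIndexSets_3} by the same ``injective resolution'' $M_\Total = P_\Wobs \sqcup (P_\Total \setminus \iota_P(P_\Wobs))$ that you use, and finally \ref{prop:ProjectiveIndexSets_3} $\Rightarrow$ \ref{prop:ProjectiveIndexSets_1} by lifting a section in stages ($\Psi_\Null$, then $\Psi_\Wobs$, then a choice of section $\tau$ of $\iota_P$ onto its image to build $\Psi_\Total$). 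You reverse the cycle: your \ref{prop:ProjectiveIndexSets_2} $\Rightarrow$ \ref{prop:ProjectiveIndexSets_1} is the trivial specialization $N = P$, $\Psi = \id_P$; your \ref{prop:ProjectiveIndexSets_1} $\Rightarrow$ \ref{prop:ProjectiveIndexSets_3} is essentially the paper's resolution construction (which, as you implicitly note, only ever uses the splitting of a regular epimorphism onto $P$, not the full lifting property); and your \ref{prop:ProjectiveIndexSets_3} $\Rightarrow$ \ref{prop:ProjectiveIndexSets_2} constructs the lift $\chi$ directly and pointwise by the axiom of choice, with injectivity of $\iota_P$ guaranteeing that $\chi_\Total$ is well defined on $\iota_P(P_\Wobs)$. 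This makes the pullback construction unnecessary: you prove the stronger lifting property directly from \ref{prop:ProjectiveIndexSets_3}, which is more elementary and arguably shorter, while the paper's detour through the pullback has the conceptual benefit of exhibiting the general mechanism by which ``every regular epi onto $P$ splits'' upgrades to the full lifting property whenever regular epimorphisms are stable under pullback. All verifications in your argument (nonemptiness of the fibers used for the choices, the compatibility $\Phi_\Total \circ \iota_M = \iota_N \circ \Phi_\Wobs$ yielding $\Phi_\Total \circ \chi_\Total = \Psi_\Total$ on $\iota_P(P_\Wobs)$, and the contradiction derived from a hypothetical section when $\iota_P$ is not injective) are sound.
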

\begin{proof}
    For \ref{prop:ProjectiveIndexSets_1} $\implies$
    \ref{prop:ProjectiveIndexSets_2}, suppose every regular
    epimorphism into $P$ splits.  Let $\Phi \colon M \to N$ and
    $\Psi \colon P \to N$ be given, with $\Phi$ a regular epimorphism.
    We can construct the pullback $P \deco{}{\Psi}{\times}{}{\Phi} M$
    of coisotropic index sets given by
    \begin{align*}
        (P \deco{}{\Psi}{\times}{}{\Phi} M)_\Total
        &=
        \left\{
            (x,y) \in P_\Total \times M_\Total
            \mid
            \Psi_\Total(x) = \Phi_\Total(y)
        \right\}
        \\
        (P \deco{}{\Psi}{\times}{}{\Phi} M)_\Wobs
        &=
        \left\{
            (x,y) \in P_\Wobs \times M_\Wobs
            \mid
            \Psi_\Wobs(x) = \Phi_\Wobs(y)
        \right\}
        \\
        (P \deco{}{\Psi}{\times}{}{\Phi} M)_\Null
        &=
        \left\{
            (x,y) \in P_\Null \times M_\Null
            \mid
            \Psi_\Wobs(x) = \Phi_\Wobs(y)
        \right\}
    \end{align*}
    with the morphism $\iota_P \times \iota_M$.  Then it is easy to
    see that $\pr_P \colon P \deco{}{\Psi}{\times}{}{\Phi} M \to P$ is
    a regular epimorphism.  Thus by assumption there exists a split
    $\tilde{\chi} \colon P \to P \deco{}{\Psi}{\times}{}{\Phi} M$, and
    thus $\chi = \mathord{\pr_M} \circ \tilde{\chi}$ fulfills
    $\Phi \circ \chi = \Psi$ as wanted. Next, we consider
    \ref{prop:ProjectiveIndexSets_2} $\implies$
    \ref{prop:ProjectiveIndexSets_3}: Construct a new coisotropic
    index set $M$ by defining
    $M_\Total = P_\Wobs \sqcup (P_\Total \setminus \image(\iota_P))$,
    $M_\Wobs = P_\Wobs$, $M_\Null = P_\Null$ and $\iota_M$ by the
    obvious inclusion.  Then $\iota_M$ is injective and
    $\Phi \colon M \to P$ defined by $\Phi_\Wobs = \id_{P_\Wobs}$ and
    \begin{equation*}
	\Phi_\Total(p)
        =
	\begin{cases}
            \iota_P(p) & \text{ if } p \in P_\Wobs \\
            p & \text{ if } p \in P_\Total\setminus \image(\iota_P)
	\end{cases}
    \end{equation*}
    is a regular epimorphism.  Hence there exists a morphism
    $\chi \colon P \to M$ such that $\Phi \circ \chi = \id_P$.  But
    then $\chi_\Wobs$ is injective and because of
    $\chi_\Total \circ \iota_P = \iota_M \circ \chi_\Wobs$ this
    implies the injectivity of $\iota_P$.  Finally, we show
    \ref{prop:ProjectiveIndexSets_3} $\implies$
    \ref{prop:ProjectiveIndexSets_1}: Let
    $\Phi \colon M \longrightarrow P$ be a regular epimorphism in
    $\SetTriple$, i.e.  $\Phi_\Total$ and $\Phi_\Wobs$ are surjective
    and $\Phi_\Wobs(M_\Null) = P_\Null$.  Thus
    $\Phi_\Null \colon M_\Null \longrightarrow P_\Null$ is a
    surjective map of sets.  Hence there exists a map
    $\Psi_\Null \colon P_\Null \longrightarrow M_\Null$ such that
    $\Phi_\Null \circ \Psi_\Null = \id_{P_\Null}$.  This map can now
    clearly be extended to $P_\Wobs$, giving
    $\Psi_\Wobs \colon P_\Wobs \longrightarrow M_\Wobs$ with
    $\Phi_\Wobs \circ \Psi_\Wobs = \id_{P_\Wobs}$ preserving the
    $\Null$-component.
    \begin{equation*}
	\begin{tikzcd}
            M_\Total
            \arrow[twoheadrightarrow]{r}{\Phi_\Total}
            & P_\Total
            \arrow[bend left = 20]{d}{\tau}
            \\
            M_\Wobs
            \arrow{u}{\iota_M}
            \arrow[twoheadrightarrow]{r}{\Phi_\Wobs}
            & P_\Wobs
            \arrow{u}{\iota_P}
            \arrow[bend left = 20]{l}{\Psi_\Wobs}
	\end{tikzcd}
    \end{equation*}
    Now if we restrict the codomain of $\iota_P$ to $\image(\iota_P)$
    then $\iota_P \colon P_\Wobs \longrightarrow \image(\iota_P)$ is
    surjective, hence there exists a split
    $\tau \colon \image(\iota_P) \longrightarrow P_\Wobs$.  This
    allows us to define
    $\tilde\Psi_\Total = \iota_M \circ \Psi_\Wobs \circ \tau$.  This
    is a split of $\Phi_\Total\at{\image \iota_M}$, since
    \begin{equation*}
	\Phi_\Total \circ \tilde\Psi_\Total
	=
        \Phi_\Total \circ \iota_M \circ \Psi_\Wobs \circ \tau
	=
        \iota_P \circ \Phi_\Wobs \circ \Psi_\Wobs \circ \tau
	=
        \id_{\image(\iota_P)}.
    \end{equation*}
    Then extending $\tilde\Psi_\Total$ to the whole of $P_\Total$
    yields a section $\Psi_\Total \colon P_\Total \to M_\Total$ of
    $\Phi_\Total$.  To show that this is now a morphism of coisotropic
    index sets we need the injectivity of $\iota_P$.  We have
    \begin{equation*}
	\Psi_\Total \circ \iota_P
	=
        \iota_M \circ \Psi_\Wobs \circ \tau \circ \iota_P
	=
        \iota_M \circ \Psi_\Wobs
    \end{equation*}
    since $\tau \circ \iota_P = \id_{P_\Wobs}$.
\end{proof}

We will denote the category of coisotropic index sets $P$ with
injective $\iota_P$ by $\SetTripleInj$.
\autoref{prop:ProjectiveIndexSets} shows that $\SetTripleInj$ is
exactly the subcategory of regular objects in $\SetTriple$.

%
%

\subsection{Free Coisotropic Modules}
\label{sec:FreeCoisotropicModules}

After having established a reasonable notion of sets underlying
coisotropic modules we now want to construct free coisotropic modules
with basis given by a coisotropic index set.
\begin{lemma}
    Let $M \in \SetTriple$ and $\algebra{A} \in \CoisoAlgTriple$ be
    given.  Then $\algebra{A}^{(M)}$ defined by
    \begin{align}
        (\algebra{A}^{(M)})_\Total
        &\coloneqq
        \algebra{A}_\Total^{(M_\Total)}
        \\
        (\algebra{A}^{(M)})_\Wobs
        &\coloneqq
        \algebra{A}_\Wobs^{(M_\Wobs)}
        \\
        (\algebra{A}^{(M)})_\Null
        &\coloneqq
        \algebra{A}_\Wobs^{(M_\Null)} + \algebra{A}_\Null^{(M_\Wobs)}
    \end{align}
    with
    $\iota_{\algebra{A}^{(M)}} \colon \algebra{A}_\Wobs^{(M_\Wobs)}
    \to \algebra{A}_\Total^{(M_\Total)}$ given by
    \begin{equation}
        \iota_{\algebra{A}^{(M)}}
        \Biggl(\sum_{m \in M_\Wobs} b_m^{\Wobs} a_m\Biggr)
        \coloneqq
        \sum_{m \in M_\Total} b_m^{\Total}
        \Biggl(
        \sum_{n \in \iota_M^{-1}(m)} \iota_\algebra{A}(a_n)
        \Biggr)
    \end{equation}
    is a coisotropic $\algebra{A}$-module.  Here $b_m^\Total$ and
    $b_m^\Wobs$ denote the standard bases of
    $\algebra{A}_\Total^{(M_\Total)}$ and
    $\algebra{A}_\Wobs^{(M_\Wobs)}$, respectively.
\end{lemma}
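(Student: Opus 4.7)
The plan is to verify each of the conditions listed in Definition~\ref{def:CoisoModules}, which amounts to four essentially routine checks, the only subtlety being the well-definedness of $\iota_{\algebra{A}^{(M)}}$.

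First, I would note that the $\Total$- and $\Wobs$-components are by construction standard free modules over $\algebra{A}_\Total$ and $\algebra{A}_\Wobs$, respectively, so no verification is needed there. To show that $(\algebra{A}^{(M)})_\Null = \algebra{A}_\Wobs^{(M_\Null)} + \algebra{A}_\Null^{(M_\Wobs)}$ is an $\algebra{A}_\Wobs$-submodule of $(\algebra{A}^{(M)})_\Wobs$, it suffices to check closure under right multiplication on each summand separately: for $\sum_{m \in M_\Null} b_m^{\Wobs} a_m$ and $c \in \algebra{A}_\Wobs$ we get $\sum_{m \in M_\Null} b_m^{\Wobs} (a_m c)$, which is again supported on $M_\Null$; and for $\sum_{m \in M_\Wobs} b_m^{\Wobs} a_m$ with $a_m \in \algebra{A}_\Null$ we use that $\algebra{A}_\Null$ is a right ideal to conclude $a_m c \in \algebra{A}_\Null$.

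Next, I would check the compatibility condition \eqref{eq:ModuleTripleCondition}: for any element $x = \sum_{m \in M_\Wobs} b_m^{\Wobs} a_m \in (\algebra{A}^{(M)})_\Wobs$ and $c \in \algebra{A}_\Null$, the product $x \cdot c = \sum_{m \in M_\Wobs} b_m^{\Wobs}(a_m c)$ has all coefficients in $\algebra{A}_\Null$ since $\algebra{A}_\Null$ is a two-sided ideal, hence $x \cdot c \in \algebra{A}_\Null^{(M_\Wobs)} \subseteq (\algebra{A}^{(M)})_\Null$.

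The most delicate point is the verification that $\iota_{\algebra{A}^{(M)}}$ is a well-defined map which is a module morphism along $\iota_\algebra{A}$. Well-definedness follows because in the outer sum $\sum_{m \in M_\Wobs} b_m^{\Wobs} a_m$ only finitely many $a_m$ are nonzero, so rewriting this in terms of the fibers $\iota_M^{-1}(m')$ for $m' \in M_\Total$ yields only finitely many nonzero contributions. The map is clearly $\field{k}$-linear, so it suffices to test the module property $\iota_{\algebra{A}^{(M)}}(b_m^\Wobs \cdot a) = \iota_{\algebra{A}^{(M)}}(b_m^\Wobs) \cdot \iota_\algebra{A}(a)$ on a basis vector $b_m^\Wobs$ with $m \in M_\Wobs$ and $a \in \algebra{A}_\Wobs$; both sides evaluate to $b_{\iota_M(m)}^\Total \cdot \iota_\algebra{A}(a)$. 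Combining these four checks establishes that $\algebra{A}^{(M)}$ is indeed a coisotropic $\algebra{A}$-module. There is no real obstacle; the only thing to be careful about is tracking the three distinct bases $b_m^\Total$, $b_m^\Wobs$ and the support conditions defining $(\algebra{A}^{(M)})_\Null$.
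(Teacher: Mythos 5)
Your verification is correct and is exactly the routine check the paper has in mind: the paper states this lemma without proof, treating it as a direct consequence of Definition~\ref{def:CoisoModules}, and your four checks (submodule property of the $\Null$-component, the condition $\module{E}_\Wobs \cdot \algebra{A}_\Null \subseteq \module{E}_\Null$ via the ideal property, and well-definedness plus the module-morphism property of $\iota_{\algebra{A}^{(M)}}$ along $\iota_\algebra{A}$ tested on basis vectors) cover precisely what needs to be verified. No gaps.
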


The following result shows that these coisotropic modules fulfill the
usual universal property for free modules.
\begin{proposition}
    Let $M \in \SetTriple$ and $\algebra{A} \in \CoisoAlgTriple$ be
    given.  The coisotropic $\algebra{A}$-module $\algebra{A}^{(M)}$
    together with the morphism
    $i \colon M \longrightarrow \algebra{A}^{(M)}$ given by the usual
    embedding of the standard basis fulfills the following universal
    property: For every $\module{E} \in \cCoisoModTriple{\algebra{A}}$
    and every morphism $\phi \colon M \longrightarrow \module{E}$ of
    coisotropic index sets there exists a unique morphism
    $\Phi \colon \algebra{A}^{(M)} \longrightarrow \module{E}$ of
    coisotropic $\algebra{A}$-modules such that the diagram
    \begin{equation}
        \begin{tikzcd}
            \algebra{A}^{(M)}
            \arrow[dashed]{r}{\Phi}
            & \module{E}
            \\
            M
            \arrow{u}{i}
            \arrow{ur}[swap]{\phi}
            & { }
        \end{tikzcd}
    \end{equation}
    commutes.
\end{proposition}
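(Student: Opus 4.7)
The plan is to reduce everything to the universal property of free modules in the classical (non-coisotropic) sense, applied componentwise, and then check that the coisotropic compatibility conditions hold for free.

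First I would define the two components of $\Phi$ independently. Using that $\algebra{A}_\Total^{(M_\Total)}$ is the free right $\algebra{A}_\Total$-module on $M_\Total$, there is a unique $\algebra{A}_\Total$-linear map $\Phi_\Total \colon \algebra{A}_\Total^{(M_\Total)} \to \module{E}_\Total$ determined by $b_m^\Total \mapsto \phi_\Total(m)$; analogously, $\Phi_\Wobs \colon \algebra{A}_\Wobs^{(M_\Wobs)} \to \module{E}_\Wobs$ is the unique $\algebra{A}_\Wobs$-linear map with $b_m^\Wobs \mapsto \phi_\Wobs(m)$. Uniqueness of the total morphism $\Phi = (\Phi_\Total, \Phi_\Wobs)$ is then automatic from these classical universal properties, since any candidate must agree with $\phi$ on the image of $i$, which generates both components as modules.

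Next I would verify the two coisotropic compatibility conditions. For the square $\Phi_\Total \circ \iota_{\algebra{A}^{(M)}} = \iota_\module{E} \circ \Phi_\Wobs$, it suffices by linearity to evaluate on a basis element $b_m^\Wobs$; by the defining formula for $\iota_{\algebra{A}^{(M)}}$ this sends $b_m^\Wobs$ to $b_{\iota_M(m)}^\Total$, and both sides then reduce to $\phi_\Total(\iota_M(m)) = \iota_\module{E}(\phi_\Wobs(m))$, which holds because $\phi$ is a morphism of coisotropic index sets. For the Null-preservation, I would use that $(\algebra{A}^{(M)})_\Null = \algebra{A}_\Wobs^{(M_\Null)} + \algebra{A}_\Null^{(M_\Wobs)}$ is additively generated by two kinds of elements: first, elements $b_m^\Wobs a$ with $m \in M_\Null$, which are sent to $\phi_\Wobs(m) \cdot a \in \module{E}_\Null$ since $\phi_\Wobs(M_\Null) \subseteq \module{E}_\Null$ and $\module{E}_\Null$ is an $\algebra{A}_\Wobs$-submodule; and second, elements $b_m^\Wobs a$ with $a \in \algebra{A}_\Null$, which land in $\module{E}_\Null$ by the module condition $\module{E}_\Wobs \cdot \algebra{A}_\Null \subseteq \module{E}_\Null$ from \eqref{eq:ModuleTripleCondition}.

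Nothing in this argument is really an obstacle; the only point worth highlighting is that the Null-component of $\algebra{A}^{(M)}$ has been defined as a sum of two pieces, and it is precisely the two clauses in the definition of a morphism in $\cCoisoModTriple{\algebra{A}}$ (preservation of the Null-index component on one side, and the module condition on the other) that match the two summands. Commutativity of the triangle $\Phi \circ i = \phi$ is immediate from the construction on basis elements, and uniqueness again follows from the classical universal properties applied componentwise.
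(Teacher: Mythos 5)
Your proposal is correct and follows essentially the same route as the paper: both reduce to the classical universal property of the free modules $\algebra{A}_\Total^{(M_\Total)}$ and $\algebra{A}_\Wobs^{(M_\Wobs)}$ componentwise and then verify that $(\algebra{A}^{(M)})_\Null = \algebra{A}_\Wobs^{(M_\Null)} + \algebra{A}_\Null^{(M_\Wobs)}$ is mapped into $\module{E}_\Null$ using $\phi_\Wobs(M_\Null) \subseteq \module{E}_\Null$ together with the module condition $\module{E}_\Wobs \cdot \algebra{A}_\Null \subseteq \module{E}_\Null$. Your explicit check of the compatibility $\Phi_\Total \circ \iota_{\algebra{A}^{(M)}} = \iota_\module{E} \circ \Phi_\Wobs$ on basis elements is a detail the paper leaves implicit, but it is the same argument in substance.
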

\begin{proof}
    Note that $(\algebra{A}^{(M)})_\Total$ and
    $(\algebra{A}^{(M)})_\Wobs$ are free.  Therefore, they fulfill
    the corresponding universal property separately.  Moreover, we
    clearly have
    \begin{equation*}
	\Phi_\Wobs(\algebra{A}_\Null^{(M_\Wobs)})
	=
        \Phi_\Wobs(\algebra{A}_\Wobs^{(M_\Wobs)} \cdot \algebra{A}_\Null)
	=
        \Phi_\Wobs(\algebra{A}_\Wobs^{(M_\Wobs)}) \cdot \algebra{A}_\Null
	\subseteq
        \module{E}_\Wobs \cdot \algebra{A}_\Null
	\subseteq
        \module{E}_\Null
    \end{equation*}
    and
    $\Phi_\Wobs(\algebra{A}_\Wobs^{(M_\Null)})
    = \Phi_\Wobs(i_\Wobs(M_\Null)) =
    \phi_\Wobs(M_\Null) \subseteq \module{E}_\Null$.
\end{proof}
\begin{definition}[$\SetTriple$-free coisotropic $\algebra{A}$-module]
    A coisotropic $\algebra{A}$-module of the form $\algebra{A}^{(M)}$
    for some coisotropic index set $M \in \SetTriple$ is called
    \emph{$\SetTriple$-free} with basis $M$.  It is called
    \emph{finitely generated} if $M$ consists of finite sets.
\end{definition}

Coisotropic modules of the form $\algebra{A}^{(M)}$ with $M$ a
coisotropic index set with injective
$\iota_M \colon M_\Wobs \to M_\Total$ will be called
$\SetTripleInj$-free.
\begin{remark}
    It is not hard to show that mapping coisotropic index sets to free
    coisotropic modules defines a functor from $\SetTriple$ to
    $\cCoisoModTriple{\algebra{A}}$ which is left adjoint to the
    forgetful functor from $\cCoisoModTriple{\algebra{A}}$ to
    $\SetTriple$, hence justifying the name free coisotropic module.
\end{remark}
\begin{example}
    \label{Example:Free_Modules}%
    \begin{examplelist}
    \item \label{item:AlgebraIsFreeModule} Every coisotropic algebra
        $\algebra{A}$ regarded as a module over itself is free with
        basis $M = (\{\pt\}, \{\pt \}, \emptyset)$.
    \item \label{item:DualModuleNotFree} Analogously to
        \eqref{Eq:Hom_Module} we can associate to every coisotropic
        right $\algebra{A}$-module $\module{E}$ a dual $\module{E}^*$
        with
        \begin{equation}
            \begin{split}
                \module{E}^*_\Total
                &=
                \Hom_{\algebra{A}_\Total}
                \bigl(\module{E}_\Total, \algebra{A}_\Total\bigr),
                \\
                \module{E}^*_\Wobs
                &=
                \Hom_{\algebra{A}}\bigl(\module{E}, \algebra{A}\bigr),
                \\
                \module{E}^*_\Null
                &=
                \bigl\lbrace
                (\alpha_\Total, \alpha_\Wobs)
                \in
                \Hom_{\algebra{A}}\bigl(\module{E}, \algebra{A}\bigr)
                \bigm\vert
                \alpha_\Wobs\bigl(\module{E}_\Wobs\bigr)
                \subseteq
                \algebra{A}_\Null
                \bigr\rbrace.
            \end{split}
        \end{equation}
        It has the structure of a coisotropic left
        $\algebra{A}$-module.  Let now $\algebra{A}^{(M)}$ be a
        finitely generated $\SetTripleInj$-free coisotropic module
        with $M_\Null \subseteq M_\Wobs \subseteq M_\Total$.  Then its
        dual is given by
        \begin{equation}
            \begin{split}
                (\algebra{A}^{(M)})^*_\Total
                &\simeq
                \algebra{A}_\Total^{(M_\Total)},
                \\
                (\algebra{A}^{(M)})^*_\Wobs
                &\simeq
                \algebra{A}_\Null^{(M_\Null)}
                \oplus
                \algebra{A}_\Wobs^{(M_\Wobs \setminus M_\Null)}
                \oplus
                \algebra{A}_\Total^{(M_\Total \setminus M_\Wobs)},
                \\
                (\algebra{A}^{(M)})^*_\Null
                &\simeq
                \algebra{A}_\Null^{(M_\Wobs)}
                \oplus
                \algebra{A}_\Total^{(M_\Total \setminus M_\Wobs)}.
            \end{split}
        \end{equation}
        As the $\Wobs$-component might fail to be a free
        $\algebra{A}_\Wobs$-module, this is in general not a free
        coisotropic $\algebra{A}$-module.  Note, however, that its
        reduction is given by the free module
        $(\algebra{A}_\red)^{(M_\Wobs \setminus M_\Null)}$ and that
        there exists a direct sum decomposition
        \begin{equation}
            (\algebra{A}^{(M)})^*
            =
            \module{F} \oplus \module{G}
        \end{equation}
        into coisotropic left $\algebra{A}$-modules $\module{F}$ and
        $\module{G}$ where $\module{F}$ is free and
        $\module{G}_\red = \lbrace 0 \rbrace$.
    \end{examplelist}
\end{example}

%
%

\subsection{Projective Coisotropic Modules}
\label{sec:PorjectiveCoisotropicModules}

In this section we want to give characterizations of regular
projective coisotropic modules that more closely resemble the usual
characterizations of projective modules via projections, dual bases,
or as direct summands of free modules. For this let us start with a
categorical definition of projective objects using a lifting property
based on regular epimorphisms instead of general epimorphisms:
\begin{definition}[Regular projective module]
    A coisotropic $\algebra{A}$-module
    $\module{P} \in \cCoisoModTriple{\algebra{A}}$ is called
    \emph{regular projective} if for every morphism
    $\Psi \colon \module{P} \longrightarrow \module{F}$ and every
    regular epimorphism
    $\Phi \colon \module{E} \longrightarrow \module{F}$ there exists a
    morphism $\chi \colon \module{P} \longrightarrow \module{E}$ such
    that $\Phi \circ \chi = \Psi$.  Diagrammatically:
    \begin{equation}
	\begin{tikzcd}
            { }
            & \module{E}
            \arrow[twoheadrightarrow]{d}{\Phi} \\
            \module{P}
            \arrow{r}{\Psi}
            \arrow[dashed]{ur}{\chi}
            & \module{F}
	\end{tikzcd}
    \end{equation}
\end{definition}
We start with an important class of regular projective coisotropic
modules: the $\SetTripleInj$-free coisotropic modules are regular
projective.
\begin{lemma}
    Every $\SetTripleInj$-free coisotropic module in
    $\cCoisoModTriple{\algebra{A}}$ is regular projective.
\end{lemma}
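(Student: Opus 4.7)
The plan is to reduce the lifting problem to a problem about coisotropic index sets and then invoke the universal property of the free module established in the previous proposition. Given the $\SetTripleInj$-free module $\algebra{A}^{(M)}$ with basis morphism $i\colon M \to \algebra{A}^{(M)}$, a regular epimorphism $\Phi\colon\module{E}\to\module{F}$ and a morphism $\Psi\colon\algebra{A}^{(M)}\to\module{F}$, it suffices to construct a morphism of coisotropic index sets $\phi\colon M\to\module{E}$ (with $\module{E}$ viewed as underlying coisotropic index set) such that $\Phi\circ\phi = \Psi\circ i$; the universal property then yields a unique $\chi\colon\algebra{A}^{(M)}\to\module{E}$ with $\chi\circ i = \phi$, and $\Phi\circ\chi = \Psi$ follows from the uniqueness part of the same universal property applied to the morphism $\Psi\circ i\colon M\to\module{F}$.

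First I would build $\phi_\Wobs\colon M_\Wobs\to\module{E}_\Wobs$. Since $i_\Wobs(M_\Null)\subseteq(\algebra{A}^{(M)})_\Null$ and $\Psi$ is a morphism of coisotropic modules, the values $\Psi_\Wobs(i_\Wobs(m))$ for $m\in M_\Null$ lie in $\module{F}_\Null$. The regular-epimorphism condition $\Phi_\Wobs(\module{E}_\Null)=\module{F}_\Null$ lets me choose preimages $\phi_\Wobs(m)\in\module{E}_\Null$ for $m\in M_\Null$, and the surjectivity of $\Phi_\Wobs$ lets me extend $\phi_\Wobs$ arbitrarily (but compatibly) to $M_\Wobs\setminus M_\Null$ with $\phi_\Wobs(m)\in\module{E}_\Wobs$.

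Next I would construct $\phi_\Total\colon M_\Total\to\module{E}_\Total$. This is the step that really uses the hypothesis that $\iota_M$ is injective: for $m\in\iota_M(M_\Wobs)$ there is a unique $n\in M_\Wobs$ with $\iota_M(n)=m$, and I set $\phi_\Total(m)\coloneqq\iota_\module{E}(\phi_\Wobs(n))$. To check compatibility with $\Psi$ I use the identity $\iota_{\algebra{A}^{(M)}}\circ i_\Wobs = i_\Total\circ\iota_M$ (which follows directly from the formula defining $\iota_{\algebra{A}^{(M)}}$), together with $\Phi_\Total\circ\iota_\module{E} = \iota_\module{F}\circ\Phi_\Wobs$. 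On $M_\Total\setminus\iota_M(M_\Wobs)$ I simply use the surjectivity of $\Phi_\Total$ to pick any preimages of $\Psi_\Total(i_\Total(m))$. By construction $\phi=(\phi_\Total,\phi_\Wobs)$ satisfies $\phi_\Wobs(M_\Null)\subseteq\module{E}_\Null$ and $\phi_\Total\circ\iota_M=\iota_\module{E}\circ\phi_\Wobs$, hence is a valid morphism in $\SetTriple$.

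The main obstacle is the middle step: ensuring that the two descriptions of $\phi_\Total$ on the image of $\iota_M$ are compatible with $\Phi$ and that no inconsistency arises where distinct elements of $M_\Wobs$ would force incompatible values in $\module{E}_\Total$. Injectivity of $\iota_M$ is precisely what prevents this clash, which is why the lemma is stated for $\SetTripleInj$-free modules; without it one would need to average or invoke a splitting that need not exist, as already illustrated by the counterexample following \autoref{prop:ProjectiveIndexSets}. Once the compatible $\phi$ is in hand, the conclusion is a direct application of the universal property.
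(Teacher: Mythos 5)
Your proof is correct and follows essentially the same route as the paper: reduce the lifting problem to the underlying coisotropic index sets and then conclude via the universal property (including its uniqueness part) of the $\SetTripleInj$-free module. The only difference is that the paper simply cites \autoref{prop:ProjectiveIndexSets} to obtain the index-set lift, whereas you re-derive that lift by hand (choosing preimages using $\Phi_\Wobs(\module{E}_\Null)=\module{F}_\Null$ and surjectivity, and using injectivity of $\iota_M$ to define the $\Total$-component consistently), which is a correct in-place re-proof of the relevant implication of that proposition.
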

\begin{proof}
    Let $\algebra{A}^{(M)}$ be a free coisotropic module with
    $M \in \SetTripleInj$.  Suppose the following morphisms are given:
    \begin{equation*}
	\begin{tikzcd}
            { }
            & \module{E}
            \arrow[twoheadrightarrow]{d}{\Phi} \\
            \algebra{A}^{(M)}
            \arrow{r}{\Psi}
            & \module{F}
	\end{tikzcd}
    \end{equation*}
    with $\Phi$ a regular epimorphism.  Since $\Phi$ and $\Psi$ induce
    morphisms $\phi \colon \module{E} \to \module{F}$ and
    $\psi \colon M \to \module{F}$ of coisotropic index sets we know
    by \autoref{prop:ProjectiveIndexSets} that there exists
    $\xi \colon M \to \module{E}$ such that $\phi \circ \xi = \psi$.
    Then by the freeness of $\algebra{A}^{(M)}$ there exists
    $\Xi \colon \algebra{A}^{(M)} \to \module{E}$ such that
    $\Phi \circ \Xi$ restricted to $M$ is just $\psi$.  Hence
    $\Phi \circ \Xi = \Psi$.
\end{proof}

The category $\cCoisoModTriple{\algebra{A}}$ has enough
$\SetTripleInj$-projectives in the following sense:
\begin{proposition}
    \label{prop:ModulesAreFreeImages}%
    For every coisotropic module
    $\module{E} \in \cCoisoModTriple{\algebra{A}}$ there exists
    $M \in \SetTripleInj$ and a regular epimorphism
    $\Phi \colon \algebra{A}^{(M)} \longrightarrow \module{E}$.
\end{proposition}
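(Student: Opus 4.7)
The plan is to construct the coisotropic index set $M$ as directly as possible from the underlying sets of $\module{E}$ itself, tweaking the $\Total$-component to guarantee injectivity of $\iota_M$, and then to let the free module map to $\module{E}$ by sending each basis generator to the corresponding element.

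More concretely, I would set $M_\Wobs \coloneqq \module{E}_\Wobs$ and $M_\Null \coloneqq \module{E}_\Null$ as sets, and take
\begin{equation*}
    M_\Total \coloneqq \module{E}_\Wobs \sqcup \module{E}_\Total,
\end{equation*}
with $\iota_M \colon M_\Wobs \to M_\Total$ the canonical inclusion into the first summand. Then $\iota_M$ is tautologically injective, so $M \in \SetTripleInj$. Next I would define $\Phi_\Wobs \colon \algebra{A}_\Wobs^{(M_\Wobs)} \to \module{E}_\Wobs$ as the unique $\algebra{A}_\Wobs$-linear extension of $b_m^\Wobs \mapsto m$, and $\Phi_\Total \colon \algebra{A}_\Total^{(M_\Total)} \to \module{E}_\Total$ as the unique $\algebra{A}_\Total$-linear extension of $b_n^\Total \mapsto \iota_\module{E}(n)$ for $n \in \module{E}_\Wobs$ and $b_x^\Total \mapsto x$ for $x \in \module{E}_\Total$.

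The verification then breaks into three routine pieces. First, $(\Phi_\Total, \Phi_\Wobs)$ is a morphism of coisotropic modules: compatibility with $\iota$ on basis elements reduces, by construction, to $\Phi_\Total(b_{\iota_M(n)}^\Total) = \iota_\module{E}(n) = \iota_\module{E}(\Phi_\Wobs(b_n^\Wobs))$, and $\Phi_\Wobs$ sends $M_\Null \subseteq \module{E}_\Null$ into $\module{E}_\Null$. Second, $\Phi_\Total$ and $\Phi_\Wobs$ are surjective because every element of $\module{E}_\Total$ (resp.\ $\module{E}_\Wobs$) occurs as the image of its corresponding basis vector. Third, using the description $(\algebra{A}^{(M)})_\Null = \algebra{A}_\Wobs^{(M_\Null)} + \algebra{A}_\Null^{(M_\Wobs)}$, I would show
\begin{equation*}
    \Phi_\Wobs\bigl((\algebra{A}^{(M)})_\Null\bigr)
    =
    \module{E}_\Null \cdot \algebra{A}_\Wobs + \module{E}_\Wobs \cdot \algebra{A}_\Null
    =
    \module{E}_\Null,
\end{equation*}
where the last equality uses that $\module{E}_\Null$ is a submodule of $\module{E}_\Wobs$ and that $\module{E}_\Wobs \cdot \algebra{A}_\Null \subseteq \module{E}_\Null$ by \eqref{eq:ModuleTripleCondition}. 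This gives regularity of the epimorphism.

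There is no real obstacle here; the only subtle point is why we are allowed to enlarge $M_\Total$ beyond $\module{E}_\Total$. Taking naively $M_\Total = \module{E}_\Total$ with $\iota_M$ induced by $\iota_\module{E}$ would fail precisely when $\iota_\module{E}$ is not injective, violating the $\SetTripleInj$-condition. The disjoint-union trick decouples the injectivity of $\iota_M$ from any properties of $\iota_\module{E}$, while the extra copies of $\module{E}_\Wobs$ inside $M_\Total$ are absorbed automatically by the algebra action and therefore do not spoil surjectivity.
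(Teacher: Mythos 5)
Your proposal is correct and takes essentially the same route as the paper: the paper's proof reuses the construction from the proof of \autoref{prop:ProjectiveIndexSets}, namely the index set with $\Total$-component $\module{E}_\Wobs \sqcup \bigl(\module{E}_\Total \setminus \image(\iota_\module{E})\bigr)$, and then invokes the universal property of $\algebra{A}^{(M)}$ together with regularity of the underlying index-set epimorphism. Your choice $M_\Total = \module{E}_\Wobs \sqcup \module{E}_\Total$ is the same disjoint-union trick with a harmless redundancy, and your explicit checks of surjectivity and of $\Phi_\Wobs\bigl((\algebra{A}^{(M)})_\Null\bigr) = \module{E}_\Null$ just spell out what the paper delegates to that earlier proposition.
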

\begin{proof}
    As constructed in the proof of \autoref{prop:ProjectiveIndexSets}
    there exists $M \in \SetTripleInj$ and a regular epimorphism
    $\phi \colon M \to \module{E}$ of coisotropic index sets.  Then by
    the universal property of $\algebra{A}^{(M)}$ there exists
    $\Phi \colon \algebra{A}^{(M)} \longrightarrow \module{E}$ such
    that $\Phi \circ i = \phi$.  Then $\Phi$ is a regular epimorphism
    since so is $\phi$.
\end{proof}

Another important notion in the characterization of regular projective
coisotropic modules is that of a split exact sequence.  A sequence of
morphisms
\begin{equation}
    \begin{tikzcd}
        0
        \arrow{r}{}
        & A
        \arrow{r}{f}
        & B
        \arrow{r}{g}
        & C
        \arrow{r}{}
        & 0
    \end{tikzcd}
\end{equation}
is a called \emph{short exact} if $f$ is a monomorphism,
$\image(f) = \ker(g)$, and $g$ is a regular epimorphism.  It is called
\emph{split exact} if in addition there exists $h \colon C \to B$ such
that $g \circ h = \id_C$.
\begin{proposition}
    \label{prop:RegProjGiveSplit}%
    Let $\module{P} \in \cCoisoModTriple{\algebra{A}}$ be a regular
    projective coisotropic module.  Then every short exact sequence of
    the form
    \begin{equation}
	\begin{tikzcd}
            0
            \arrow{r}{}
            &\module{E}
            \arrow[hookrightarrow]{r}{\Phi}
            &\module{F}
            \arrow[twoheadrightarrow]{r}{\Psi}
            &\module{P}
            \arrow{r}{}
            &0
	\end{tikzcd}
    \end{equation}
    is a split exact sequence.
\end{proposition}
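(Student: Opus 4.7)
The proof is essentially immediate from the definition of regular projectivity. The plan is to take the identity morphism $\id_\module{P}\colon \module{P} \to \module{P}$ as the ``test morphism'' in the definition of regular projectivity, and lift it along the regular epimorphism $\Psi$.

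More concretely, I would proceed as follows. By hypothesis, $\Psi\colon \module{F} \to \module{P}$ is a regular epimorphism. Since $\module{P}$ is regular projective, the lifting property applied to the diagram
\begin{equation*}
    \begin{tikzcd}
        { }
        & \module{F}
        \arrow[twoheadrightarrow]{d}{\Psi} \\
        \module{P}
        \arrow{r}{\id_\module{P}}
        \arrow[dashed]{ur}{h}
        & \module{P}
    \end{tikzcd}
\end{equation*}
produces a morphism $h\colon \module{P} \to \module{F}$ of coisotropic $\algebra{A}$-modules satisfying $\Psi \circ h = \id_\module{P}$. This is precisely the splitting condition, so the given short exact sequence is split exact.

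There is no real obstacle here: the definition of regular projectivity was tailored precisely so that the lift against a regular epimorphism exists, and regular epimorphisms in $\cCoisoModTriple{\algebra{A}}$ are stable enough (they satisfy $\Psi_\Total, \Psi_\Wobs$ surjective and $\Psi_\Wobs(\module{F}_\Null) = \module{P}_\Null$) that no additional compatibility needs to be checked for $h$. The existence of the splitting does not require any use of the monomorphism $\Phi$ or the kernel/image condition in the middle of the sequence; these appear only implicitly to guarantee that the sequence deserves to be called ``exact'' in the first place.
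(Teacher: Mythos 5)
Your proof is correct and follows exactly the paper's argument: since a short exact sequence requires $\Psi$ to be a regular epimorphism, lifting $\id_{\module{P}}$ through $\Psi$ via the regular projectivity of $\module{P}$ gives the splitting. The paper states this in one line; your version simply spells out the same lifting diagram explicitly.
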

\begin{proof}
    Since $\module{P}$ is regular projective and $\Psi$ is a regular
    epimorphism the sequence splits by the universal property of $\module{P}$.
\end{proof}

For us split exact sequences are important because of the so-called
splitting lemma.  The splitting lemma is a basic result in homological
algebra for modules over rings. Despite
$\cCoisoModTriple{\algebra{A}}$ not being an abelian category, the
splitting lemma nevertheless holds for coisotropic modules.
\begin{proposition}[Splitting lemma in $\cCoisoModTriple{\algebra{A}}$]
    A short exact sequence
    \begin{equation}
	\label{prop:SplittingLemma_shortexseq}
	\begin{tikzcd}
            0
            \arrow{r}{}
            &\module{E}
            \arrow[hookrightarrow]{r}{\Phi}
            &\module{F}
            \arrow[twoheadrightarrow]{r}{\Psi}
            &\module{G}
            \arrow{r}{}
            &0
	\end{tikzcd}
    \end{equation}
    in $\cCoisoModTriple{\algebra{A}}$ splits if and only if it is
    isomorphic as a sequence to
    \begin{equation}
	\begin{tikzcd}
            0
            \arrow{r}{}
            &\module{E}
            \arrow[hookrightarrow]{r}{i_\module{E}}
            &\module{E} \oplus \module{G}
            \arrow[twoheadrightarrow]{r}{p_\module{G}}
            &\module{G}
            \arrow{r}{}
            &0
	\end{tikzcd}
    \end{equation}
    with the canonical inclusion $i_\module{E}$ and projection
    $p_\module{G}$.
\end{proposition}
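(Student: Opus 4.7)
The plan is to reduce the statement to the classical splitting lemma for $\field{k}$-modules applied separately on the $\Total$- and $\Wobs$-components, and then handle the $\Null$-component by hand using the explicit description of kernels and images from items \ref{Kernel} and \ref{CoisoImage} of the excerpt.

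The ``if'' direction is immediate: the canonical sequence with $\module{E} \oplus \module{G}$ visibly splits via the inclusion $i_\module{G}$, and a sequence isomorphic to a split sequence is split (one just conjugates the splitting by the given isomorphism). So I would focus on the ``only if'' direction. Assume a splitting $\Sigma \colon \module{G} \to \module{F}$ of \eqref{prop:SplittingLemma_shortexseq} is given, i.e.\ $\Psi \circ \Sigma = \id_\module{G}$. Define a candidate isomorphism $\alpha \colon \module{E} \oplus \module{G} \to \module{F}$ componentwise by $\alpha_\Total(e,g) = \Phi_\Total(e) + \Sigma_\Total(g)$ and $\alpha_\Wobs(e,g) = \Phi_\Wobs(e) + \Sigma_\Wobs(g)$. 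Compatibility with $\iota$ and the inclusion $\alpha_\Wobs(\module{E}_\Null \oplus \module{G}_\Null) \subseteq \module{F}_\Null$ are immediate from $\Phi$ and $\Sigma$ being morphisms of coisotropic modules, so $\alpha$ is a morphism in $\cCoisoModTriple{\algebra{A}}$; the commutativity of the diagram $\alpha \circ i_\module{E} = \Phi$ and $\Psi \circ \alpha = p_\module{G}$ is a direct computation.

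That $\alpha_\Total$ and $\alpha_\Wobs$ are bijective follows from the ordinary splitting lemma for $\field{k}$-modules applied to the two underlying exact sequences (here I use that short exactness in the sense of the text yields short exactness of each component sequence, since by \ref{Kernel} and \ref{CoisoImage} the equation $\image(\Phi) = \ker(\Psi)$ implies $\image(\Phi_\Total) = \ker(\Psi_\Total)$ and $\image(\Phi_\Wobs) = \ker(\Psi_\Wobs)$). The only remaining point, and the one I expect to be the sole subtle step, is to show that $\alpha_\Wobs$ restricts to a bijection $\module{E}_\Null \oplus \module{G}_\Null \to \module{F}_\Null$. Injectivity is automatic from injectivity of $\alpha_\Wobs$. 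For surjectivity onto $\module{F}_\Null$, take $f \in \module{F}_\Null$ and set $g \coloneqq \Psi_\Wobs(f)$; since $\Psi$ is a regular epimorphism, $g \in \Psi_\Wobs(\module{F}_\Null) = \module{G}_\Null$. Then $f - \Sigma_\Wobs(g)$ lies in $\ker(\Psi_\Wobs) \cap \module{F}_\Null$, which by items \ref{Kernel} and \ref{CoisoImage} together with $\image(\Phi) = \ker(\Psi)$ equals $\image(\Phi_\Wobs\at{\module{E}_\Null}) = \Phi_\Wobs(\module{E}_\Null)$. Hence $f - \Sigma_\Wobs(g) = \Phi_\Wobs(e)$ for a unique $e \in \module{E}_\Null$, giving $f = \alpha_\Wobs(e,g)$ as required.

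Thus $\alpha$ is an isomorphism of coisotropic $\algebra{A}$-modules making the two sequences isomorphic, which finishes the proof. The main obstacle, as noted, is exactly the equality $\ker(\Psi_\Wobs) \cap \module{F}_\Null = \Phi_\Wobs(\module{E}_\Null)$ on $\Null$-components; once this is extracted from the definition of short exactness via the explicit descriptions of kernel and image, everything else reduces to the classical splitting lemma applied twice.
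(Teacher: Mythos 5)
Your proof is correct, and it is close in spirit to the paper's argument (classical splitting lemma on the $\Total$- and $\Wobs$-components, plus a separate bookkeeping step for the $\Null$-components), but it runs in the opposite direction and uses a different datum. You build $\alpha\colon \module{E}\oplus\module{G}\to\module{F}$ directly from the section $\Sigma$ of $\Psi$, which is exactly what the paper's definition of ``splits'' provides, and you verify that $\alpha_\Wobs$ maps $\module{E}_\Null\oplus\module{G}_\Null$ onto $\module{F}_\Null$ by using the $\Null$-component of the exactness condition, namely $\Phi_\Wobs(\module{E}_\Null)=\ker(\Psi_\Wobs)\cap\module{F}_\Null$, read off from the explicit descriptions of $\image$ and $\ker$. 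The paper instead starts from a retraction $\xi$ of $\Phi$ (so it implicitly converts the given section of $\Psi$ into such a $\xi$, which itself needs the same $\Null$-component equality to be a coisotropic morphism) and constructs $\theta=i_\module{E}\circ\xi+i_\module{G}\circ\Psi\colon\module{F}\to\module{E}\oplus\module{G}$, disposing of the $\Null$-component by the terse remark that $\xi$ and $\Psi$ are regular epimorphisms. Your route buys two things: it matches the stated definition of a splitting without any intermediate conversion, and it makes the only delicate point — surjectivity onto $\module{F}_\Null$ — fully explicit, whereas the paper's one-line justification (especially the claim about $\xi$) is left to the reader. One cosmetic remark: the component sequences are sequences of $\algebra{A}_\Total$- resp.\ $\algebra{A}_\Wobs$-modules rather than bare $\field{k}$-modules, but since your $\alpha$ is built from module morphisms and you only need bijectivity of its components, invoking the classical splitting lemma there is harmless.
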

\begin{proof}
    Suppose there exists
    $\xi \colon \module{F} \longrightarrow \module{E}$ such that
    $\xi \circ \Phi = \id_\module{E}$.  Then we know that
    $\module{F}_\Total \simeq \module{E}_\Total \oplus
    \module{G}_\Total$ and
    $\module{F}_\Wobs \simeq \module{E}_\Wobs \oplus \module{G}_\Wobs$
    by the splitting lemma in the respective categories of modules.
    We denote these isomorphisms by $\theta_\Total$ and
    $\theta_\Wobs$, respectively.  To show that these form a morphism
    of coisotropic triples consider that
    $\theta = i_\module{E} \circ \xi + i_\module{G} \circ \Psi$ is a
    composition of morphisms of coisotropic triples.  Thus $\theta$
    itself is a morphism of coisotropic triples.  Moreover,
    $\theta_\Wobs(\module{F}_\Null) = \xi_\Wobs(\module{F}_\Null) +
    \Psi_\Wobs(\module{F}_\Null) = \module{E}_\Null
    + \module{G}_\Null$
    holds since $\xi$ and $\Psi$ are regular epimorphisms.  Hence,
    $\theta$ is an isomorphism of coisotropic modules.  Conversely,
    suppose
    $\theta \colon \module{F} \longrightarrow \module{E} \oplus
    \module{G}$ is an isomorphism such that
    $\theta \circ \Phi = i_\module{E}$ and
    $p_\module{G} \circ \theta = \Psi$.  Then
    $\theta^{-1} \circ i_\module{G}$ is clearly a splitting for
    \eqref{prop:SplittingLemma_shortexseq}.
\end{proof}

The following result shows that regular projective coisotropic modules
can be described as direct summands of $\SetTripleInj$-free modules.
The proof is completely analogous to the usual case, see e.g.
\cite[Prop. 3.10]{jacobson:1989a}.
\begin{theorem}[Regular projective modules]
    \label{thm:RegularProjective}%
    Let $\module{P} \in \cCoisoModTriple{\algebra{A}}$ be given.  The
    following statements are equivalent:
    \begin{theoremlist}
    \item \label{thm:RegularProjective_1} The module $\module{P}$ is
        regular projective.
    \item \label{thm:RegularProjective_2} Every short exact sequence
        $0 \rightarrow \module{E} \rightarrow \module{F} \rightarrow
        \module{P} \rightarrow 0$ splits.
    \item \label{thm:RegularProjective_3} The module $\module{P}$ is a
        direct summand of a $\SetTripleInj$-free module, i.e.  there
        exists $M \in \SetTripleInj$ and
        $\module{E} \in \cCoisoModTriple{\algebra{A}}$ such that
        $\algebra{A}^{(M)} \simeq \module{P} \oplus \module{E}$.
    \item \label{thm:RegularProjective_4} There exists
        $M \in \SetTripleInj$ and
        $e = (e_\Total, e_\Wobs) \in
        \End_\algebra{A}(\algebra{A}^{(M)})$ such that $e^2 = e$ and
        $\module{P} \simeq e\algebra{A}^{(M)} = \image(e)$.
    \end{theoremlist}
\end{theorem}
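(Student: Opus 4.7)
The plan is to prove the cyclic chain of implications \ref{thm:RegularProjective_1} $\Rightarrow$ \ref{thm:RegularProjective_2} $\Rightarrow$ \ref{thm:RegularProjective_3} $\Rightarrow$ \ref{thm:RegularProjective_4} $\Rightarrow$ \ref{thm:RegularProjective_1}, following the strategy of the classical Serre-type proof but taking care at each step that the $\Null$-components and regular epimorphisms behave correctly.

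For \ref{thm:RegularProjective_1} $\Rightarrow$ \ref{thm:RegularProjective_2}, I would just invoke \autoref{prop:RegProjGiveSplit}, which is precisely this statement. For \ref{thm:RegularProjective_2} $\Rightarrow$ \ref{thm:RegularProjective_3}, I would apply \autoref{prop:ModulesAreFreeImages} to obtain $M \in \SetTripleInj$ together with a regular epimorphism $\Phi \colon \algebra{A}^{(M)} \to \module{P}$. Since kernels exist in $\cCoisoModTriple{\algebra{A}}$ (by item \ref{Kernel} of the general remarks), this fits into a short exact sequence $0 \to \ker(\Phi) \to \algebra{A}^{(M)} \to \module{P} \to 0$. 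The assumption \ref{thm:RegularProjective_2} combined with the splitting lemma then yields the required direct sum decomposition $\algebra{A}^{(M)} \simeq \module{P} \oplus \ker(\Phi)$.

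For \ref{thm:RegularProjective_3} $\Rightarrow$ \ref{thm:RegularProjective_4}, I would take the decomposition $\algebra{A}^{(M)} \simeq \module{P} \oplus \module{E}$ and define $e$ to be the composition of the projection onto $\module{P}$ with the inclusion back into $\algebra{A}^{(M)}$. Both components $e_\Total$ and $e_\Wobs$ are idempotent endomorphisms in the usual module sense, and they commute with $\iota$ and preserve the $\Null$-component by construction of the direct sum $\oplus$ in $\cCoisoModTriple{\algebra{A}}$. Thus $e \in \End_\algebra{A}(\algebra{A}^{(M)})$ with $e^2=e$ and $\image(e) \simeq \module{P}$. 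For \ref{thm:RegularProjective_4} $\Rightarrow$ \ref{thm:RegularProjective_1}, given $\Psi\colon\module{P}\to\module{F}$ and a regular epimorphism $\Phi\colon\module{E}\to\module{F}$, I would precompose $\Psi$ with $e\colon\algebra{A}^{(M)}\to\algebra{A}^{(M)}$ (viewed as landing in $\module{P}$) to get a morphism $\algebra{A}^{(M)}\to\module{F}$, lift it to $\tilde\chi\colon\algebra{A}^{(M)}\to\module{E}$ using the fact that $\SetTripleInj$-free modules are regular projective, and then restrict $\tilde\chi$ to $\module{P} \simeq e\algebra{A}^{(M)}$ to obtain the desired $\chi$.

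The main subtlety, as in the classical case, is not conceptual but bookkeeping: at each step I must check that the module morphisms constructed do preserve the $\Null$-components and intertwine the $\iota$-maps, so that they are genuine morphisms in $\cCoisoModTriple{\algebra{A}}$ rather than just compatible pairs of plain module morphisms. The use of $M \in \SetTripleInj$ (not merely $M \in \SetTriple$) is crucial for lifting in step \ref{thm:RegularProjective_4} $\Rightarrow$ \ref{thm:RegularProjective_1}, since only for injective $\iota_M$ does \autoref{prop:ProjectiveIndexSets} guarantee that regular epimorphisms to $\algebra{A}^{(M)}$ split at the level of underlying coisotropic index sets. Apart from this, each implication reduces to a routine transcription of the analogous argument in \cite[Prop. 3.10]{jacobson:1989a}.
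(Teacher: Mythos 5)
Your proposal is correct and follows essentially the same route as the paper: the implications \ref{thm:RegularProjective_1}$\Rightarrow$\ref{thm:RegularProjective_2} via \autoref{prop:RegProjGiveSplit}, \ref{thm:RegularProjective_2}$\Rightarrow$\ref{thm:RegularProjective_3} via \autoref{prop:ModulesAreFreeImages} and the splitting lemma, and the idempotent correspondence for \ref{thm:RegularProjective_4} are exactly the paper's arguments. The only cosmetic difference is that you close the cycle at \ref{thm:RegularProjective_4}$\Rightarrow$\ref{thm:RegularProjective_1} by lifting through the idempotent $e$, whereas the paper proves \ref{thm:RegularProjective_3}$\Rightarrow$\ref{thm:RegularProjective_1} by lifting through the section $\sigma$ of the split sequence --- the same lifting argument using regular projectivity of $\algebra{A}^{(M)}$.
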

\begin{proof}
    We first show the equivalence of \ref{thm:RegularProjective_1},
    \ref{thm:RegularProjective_2} and \ref{thm:RegularProjective_3}.
    The implication \ref{thm:RegularProjective_1} $\implies$
    \ref{thm:RegularProjective_2} is given by
    \autoref{prop:RegProjGiveSplit}.  Assume
    \ref{thm:RegularProjective_2}.  By
    \autoref{prop:ModulesAreFreeImages} there exists a short exact
    sequence
    $0 \rightarrow \module{E} \rightarrow \algebra{A}^{(M)} \rightarrow
    \module{P} \rightarrow 0$ with $M \in \SetTripleInj$.  This
    sequence splits by assumption, and therefore by the splitting
    lemma we have
    $\algebra{A}^{(M)} \simeq \module{E} \oplus \module{P}$.  Now
    assuming \ref{thm:RegularProjective_3} we have a split exact
    sequence
    $0 \rightarrow \module{E} \rightarrow \algebra{A}^{(M)}
    \rightarrow \module{P} \rightarrow 0$ with $M \in \SetTripleInj$.
    Let $\Psi \colon \module{P} \longrightarrow \module{F}$ and
    $\Phi \colon \module{G} \longrightarrow \module{F}$ be given with
    $\Phi$ regular epimorphism.  We get the following diagram:
    \begin{equation*}
	\begin{tikzcd}
            0
            \arrow{r}{}
            &  \module{E}
            \arrow{r}{\iota}
            & \algebra{A}^{(M)}
            \arrow{r}{\pi}
            & \module{P}
            \arrow{r}{}
            \arrow{d}{\Psi}
            \arrow[bend left = 10, shift left = 2pt]{l}{\sigma}
            & 0 \\
            { }
            & { }
            & \module{G}
            \arrow[twoheadrightarrow]{r}{\Phi}
            & \module{F}
            & { }
	\end{tikzcd}
    \end{equation*}
    Since $\algebra{A}^{(M)}$ is regular projective there exists a
    morphism
    $\eta \colon \algebra{A}^{(M)} \longrightarrow \module{G}$ such
    that $\Phi \circ \eta = \Psi \circ \pi$.  Then
    $\eta \circ \sigma \colon \module{P} \longrightarrow \module{G}$
    yields the desired morphism making $\module{P}$ projective.
    Statements \ref{thm:RegularProjective_3} and
    \ref{thm:RegularProjective_4} are equivalent: If
    $\algebra{A}^{(M)} \simeq \module{P} \oplus \module{E}$, then
    choose for $e \in \End_\algebra{A}(\algebra{A}^{(M)})$ the
    projection on $\module{P}$.  If
    $\module{P} \simeq e\algebra{A}^{(M)}$, then
    $\module{E} \coloneqq \ker(e)$ gives the correct direct summand.
\end{proof}

The equivalence of \ref{thm:RegularProjective_1} and
\ref{thm:RegularProjective_2} can also be phrased as $\module{P}$ is
regular projective if and only if every regular epimorphism
$\module{F} \to \module{P}$ splits, compare
\autoref{prop:ProjectiveIndexSets}.  A coisotropic index set $M$ such
that $\algebra{A}^{(M)} \simeq \module{P} \oplus \module{E}$ is called
\emph{generating set} of the regular projective module $\module{P}$.
In addition to the above characterizations of regular projective
modules we can also use a coisotropic version of a dual basis.
\begin{proposition}[Dual basis]
    \label{prop:DualBasis}%
    Let $\module{P} \in \cCoisoModTriple{\algebra{A}}$.  Then
    $\module{P}$ is regular projective with generating set
    $M \in \SetTripleInj$ if and only if there exist families
    $(e_n)_{n \in M_\Total} \subseteq \module{P}_\Total$ and
    $(f_m)_{m \in M_\Wobs} \subseteq \module{P}_\Wobs$ of elements and
    families
    $(e^n)_{n \in M_\Total} \subseteq (\module{P}_\Total)^* =
    \Hom_{\algebra{A}_\Total}(\module{P}_\Total , \algebra{A}_\Total)$
    and
    $(f^m)_{m \in M_\Wobs} \subseteq (\module{P}_\Wobs)^*=
    \Hom_{\algebra{A}_\Wobs}(\module{P}_\Wobs , \algebra{A}_\Wobs)$ of
    functionals such that
    \begin{equation}
        \label{prop:DualBasis_Eq1}
        x_\Total = \sum_{n \in M_\Total} e_n e^n(x_\Total)
        \quad
        \textrm{and}
        \quad
        x_\Wobs = \sum_{m \in M_\Wobs} f_m f^m(x_\Wobs)
    \end{equation}
    for all
    $x_\Total \in \module{P}_\Total, x_\Wobs \in \module{P}_\Wobs$
    where for fixed $x_\Total,x_\Wobs$ only finitely many of the
    $e^n(x_\Total), f^m(x_\Wobs)$ differ from $0$.  Moreover, the
    following properties need to be satisfied:
    \begin{propositionlist}
    \item \label{prop:DualBasis_1} One has
        $e_{\iota_M(m)} = \iota_{\module{P}}(f_m)$ for
        $m \in M_\Wobs$.
    \item \label{prop:DualBasis_2} One has $f_m \in \module{P}_\Null$
        for $m \in M_\Null$.
    \item \label{prop:DualBasis_3} One has
        $e^{\iota_M(m)} \circ \iota_{\module{P}} = \iota_\algebra{A}
        \circ f^m$ for $m \in M_\Wobs$.
    \item \label{prop:DualBasis_4} One has
        $e^n \circ \iota_{\module{P}} = 0$ for
        $n \in M_\Total \setminus \iota_M(M_\Wobs)$.
    \item \label{prop:DualBasis_5} One has
        $f^m(x) \in \algebra{A}_\Null$ for $x \in \module{P}_\Null$,
        $m \in M_\Wobs \setminus M_\Null$.
    \end{propositionlist}
\end{proposition}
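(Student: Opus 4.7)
The plan is to reduce the claim to the direct-summand characterization \ref{thm:RegularProjective_3} of \autoref{thm:RegularProjective}, which says that $\module{P}$ is regular projective with generating set $M$ precisely when there exist morphisms $\pi\colon \algebra{A}^{(M)} \to \module{P}$ and $\sigma\colon \module{P} \to \algebra{A}^{(M)}$ of coisotropic $\algebra{A}$-modules with $\pi \circ \sigma = \id_\module{P}$; the splitting lemma then yields $\algebra{A}^{(M)} \simeq \module{P} \oplus \ker(\pi)$. I will establish a bijective correspondence between such splittings $(\pi,\sigma)$ and data $\{e_n, f_m, e^n, f^m\}$ satisfying \eqref{prop:DualBasis_Eq1} and \ref{prop:DualBasis_1}--\ref{prop:DualBasis_5}, via the standard basis vectors $b_n^\Total, b_m^\Wobs$ of $\algebra{A}^{(M)}$. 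Throughout I will use that $M \in \SetTripleInj$, so $\iota_M$ is injective and the formula from the free-module lemma collapses to $\iota_{\algebra{A}^{(M)}}(b_m^\Wobs) = b_{\iota_M(m)}^\Total$, making coefficient comparisons unambiguous.

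In the forward direction, starting from a splitting $(\pi,\sigma)$, I would set $e_n \coloneqq \pi_\Total(b_n^\Total)$ and $f_m \coloneqq \pi_\Wobs(b_m^\Wobs)$, and read off the functionals as the coefficients in $\sigma_\Total(x) = \sum_n e^n(x)\, b_n^\Total$ and $\sigma_\Wobs(x) = \sum_m f^m(x)\, b_m^\Wobs$. Relation \eqref{prop:DualBasis_Eq1} is then immediate from $\pi \circ \sigma = \id_\module{P}$. Condition \ref{prop:DualBasis_1} drops out of $\pi_\Total \circ \iota_{\algebra{A}^{(M)}} = \iota_\module{P} \circ \pi_\Wobs$ applied to $b_m^\Wobs$; conditions \ref{prop:DualBasis_3} and \ref{prop:DualBasis_4} drop out of $\iota_{\algebra{A}^{(M)}} \circ \sigma_\Wobs = \sigma_\Total \circ \iota_\module{P}$ by comparing coefficients at indices in $\iota_M(M_\Wobs)$ and its complement separately; finally, \ref{prop:DualBasis_2} and \ref{prop:DualBasis_5} come from the fact that $\pi_\Wobs$ and $\sigma_\Wobs$ preserve the $\Null$-components, combined with the explicit description $(\algebra{A}^{(M)})_\Null = \algebra{A}_\Wobs^{(M_\Null)} + \algebra{A}_\Null^{(M_\Wobs)}$.

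In the backward direction, I would define $\pi$ by $\algebra{A}$-linear extension of $b_n^\Total \mapsto e_n$ and $b_m^\Wobs \mapsto f_m$, and $\sigma$ by the same coefficient formulas as above. Conditions \ref{prop:DualBasis_1} and \ref{prop:DualBasis_2} then show that $\pi$ is a morphism of coisotropic $\algebra{A}$-modules, while \ref{prop:DualBasis_3}--\ref{prop:DualBasis_5} do the same for $\sigma$. Equation \eqref{prop:DualBasis_Eq1} gives $\pi \circ \sigma = \id_\module{P}$, and since $\sigma_\Wobs$ sends $\module{P}_\Null$ into $(\algebra{A}^{(M)})_\Null$, the morphism $\pi$ is even a regular epimorphism; the splitting lemma together with \ref{thm:RegularProjective_3} of \autoref{thm:RegularProjective} then completes the argument. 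The main bookkeeping obstacle I anticipate is cleanly separating the indices $n \in \iota_M(M_\Wobs)$ from $n \in M_\Total \setminus \iota_M(M_\Wobs)$ when analysing the $\Total$-component of $\sigma$; once this separation is organised using injectivity of $\iota_M$, the five conditions \ref{prop:DualBasis_1}--\ref{prop:DualBasis_5} emerge naturally as the algebraic shadow of the two coisotropic morphism axioms for $\pi$ and $\sigma$.
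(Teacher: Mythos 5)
Your proposal is correct and follows essentially the same route as the paper: both directions identify the dual-basis data with a splitting of the free module $\algebra{A}^{(M)}$ through its standard basis vectors and coordinate functionals, using injectivity of $\iota_M$ to compare coefficients and the description of $(\algebra{A}^{(M)})_\Null$ to obtain conditions \ref{prop:DualBasis_2} and \ref{prop:DualBasis_5}. The only (cosmetic) difference is that you package the splitting as a retraction--section pair and invoke \ref{thm:RegularProjective_3} of \autoref{thm:RegularProjective} together with the splitting lemma, whereas the paper works with the idempotent $e$ of \ref{thm:RegularProjective_4} (equivalently $e = \sigma \circ \pi$), so the verifications coincide.
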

\begin{proof}
    Let $\module{P} \simeq e\algebra{A}^{(M)}$ be regular projective
    with idempotent $e \in \End_\algebra{A}(\algebra{A}^{(M)})$ and
    generating set $M \in \SetTripleInj$.  Denote by
    $b_n \in \algebra{A}_\Total^{(M_\Total)}$ and
    $c_m \in \algebra{A}_\Wobs^{(M_\Wobs)}$ the standard bases and by
    $b^n$ and $c^m$ the canonical coordinate functionals.  Defining
    $e_n = e_\Total(b_n)$ and $f_m = e_\Wobs(c_m)$ for
    $n \in M_\Total$ and $m \in M_\Wobs$ as well as
    $e^n = b^n\at{e\algebra{A}^{M}}$ and
    $f^m = c^m\at{e\algebra{A}^{(M)}}$ gives usual dual bases for
    $\algebra{A}_\Total^{(M_\Total)}$ and
    $\algebra{A}_\Wobs^{(M_\Wobs)}$.  Thus we get
    \eqref{prop:DualBasis_Eq1}.  Since $e$ is a morphism of
    coisotropic modules we have \ref{prop:DualBasis_1} and
    \ref{prop:DualBasis_2} by the definition of $e_n$ and $f_m$.  For
    $x \in \algebra{A}_\Wobs^{(M_\Wobs)}$ it holds that
    \begin{align*}
        b^n(\iota_{\algebra{A}^{(M)}}(x))
        =
        b^n \Bigl(
        \iota_{\algebra{A}^{(M)}}
        \Bigl( \sum_{m \in M_\Wobs} c_m x_m\Bigr)
        \Bigr)
        =
        b^n \Bigl(
        \sum_{m \in M_\Wobs} b_{\iota_M(m)} \iota_\algebra{A}(x_m)
        \Bigr),
    \end{align*}
    and therefore we get
    $b^{\iota_M(m)} \circ \iota_{\algebra{A}^{(M)}} =
    \iota_\algebra{A} \circ c^m$ by setting $n = \iota_M(m)$ and
    $b^n \circ \iota_{\algebra{A}^{(M)}} = 0$ for
    $n \in M_\Total\setminus \iota_M(M_\Wobs)$.  Since $e^n$ and $f^m$
    are defined as restrictions of $b^n$ and $c^m$ to
    $e\algebra{A}^{(M)}$ we get \ref{prop:DualBasis_3} and
    \ref{prop:DualBasis_4}.  If now
    $x \in \module{P}_\Null \simeq e_\Wobs(\algebra{A}^{(M)})_\Null
    \subseteq \algebra{A}_\Wobs^{(M_\Null)} +
    \algebra{A}_\Null^{(M_\Wobs)}$, then
    $c^m(x) \in \algebra{A}_\Null$ for
    $m \in M_\Wobs \setminus M_\Null$ and therefore
    \ref{prop:DualBasis_5} holds.  Let now such a dual basis in the
    above sense be given.  The map $M \to \module{P}$ of coisotropic
    index sets defined by $n \mapsto e_n$ and $m \mapsto f_m$ is a
    morphism of coisotropic index sets because of
    \ref{prop:DualBasis_1} and \ref{prop:DualBasis_2}.  By the
    universal property of free coisotropic modules we thus get an
    induced morphism $q \colon \algebra{A}^{(M)} \to \module{P}$.  We
    define $\ins \colon \module{P} \to \algebra{A}^{(M)}$ by
    \begin{equation*}
        \ins_\Total \colon x_\Total
        \mapsto
        \sum_{n \in M_\Total} b_n e^n(x_\Total)
        \quad
        \textrm{and}
        \quad
        \ins_\Wobs \colon x_\Wobs
        \mapsto
        \sum_{m \in M_\Wobs} c_m f^m (x_\Wobs).
    \end{equation*}
    The maps $\ins_\Total$ and $\ins_\Wobs$ are clearly module
    morphisms as the $e^m$ and $f^m$ are.  To see that these form a
    morphism of coisotropic modules observe that
    \begin{align*}
        (\ins_\Total \circ \iota_\module{P})(x_\Wobs)
        &=
        \sum_{n \in M_\Total}
        b_n e^n(\iota_{\module{P}}(x_\Wobs))
        =
        \sum_{m \in M_\Wobs}
        b_{\iota_M(m)} \iota_\algebra{A}(f^m (x_\Wobs))
        \\
        &=
        \iota_{\algebra{A}^{(M)}}
        \Bigl(\sum_{m \in M_\Wobs} c_{m} f^m(x_\Wobs) \Bigr)
        =
        (\iota_{\algebra{A}^{(M)}} \circ \ins_\Wobs)(x_\Wobs)
    \end{align*}
    for $x_\Wobs \in \module{P}_\Wobs$ by \ref{prop:DualBasis_3} and
    \ref{prop:DualBasis_4}.  Moreover, for $x_0 \in \module{P}_\Null$
    \begin{equation*}
        \ins_\Wobs(x_0)
        =
        \sum_{m \in M_\Wobs} c_m f^m(x_0)
        =
        \sum_{m \in M_\Null} c_m f^m(x_0)
        +
        \sum_{m \in M_\Wobs \setminus M_\Null} c_m f^m(x_0)
        \in (\algebra{A}^{(M)})_\Null
    \end{equation*}
    holds due to \ref{prop:DualBasis_5}.  Thus we have seen that
    $\ins$ is a morphism of coisotropic modules.  We now show
    $q \circ \ins = \id_\module{P}$: For
    $x_\Total \in \module{P}_\Total$ we have
    \begin{equation*}
        q_\Total (\ins_\Total(x_\Total))
        =
        q_\Total\Bigl(\sum_{n \in M_\Total} b_n e^n(x_\Total)\Bigr)
        =
        \sum_{n \in M_\Total} e_n e^n(x_\Total)
        =
        x_\Total
    \end{equation*}
    by assumption and likewise for $x_\Wobs \in \module{P}_\Wobs$ we
    have
    \begin{equation*}
        q_\Wobs (\ins_\Wobs(x_\Wobs))
        =
        q_\Wobs\Bigl(\sum_{m \in M_\Wobs} c_m f^m(x_\Wobs)\Bigr)
        =
        \sum_{m \in M_\Wobs} f^m(x_\Wobs)
        =
        x_\Wobs.
    \end{equation*}
    Thus the coisotropic endomorphism
    $e \coloneqq \ins \circ q \in \End_\algebra{A}(\algebra{A}^{(M)})$
    is an idempotent and $\module{P} \simeq e\algebra{A}^{(M)}$ via
    the maps $\ins$ and $q\at{e\algebra{A}^{(M)}}$.  Hence
    $\module{P}$ is regular projective.
\end{proof}
\begin{remark}
    We have seen in \autoref{Example:Free_Modules},
    \ref{item:AlgebraIsFreeModule} that duals of finitely generated
    $\SetTripleInj$-free coisotropic modules are not in general
    regular projective.  Therefore, duals of finitely generated
    regular projective coisotropic modules may fail to be regular
    projective.
\end{remark}

The notion of regular projectivity is compatible with the reduction
functor
$\red \colon \cCoisoModTriple{\algebra{A}} \to
\cModules{\algebra{A}_\red}$:
\begin{proposition}
    \label{prop:ProjectiveReduction}%
    Let $\module{P} \in \cCoisoModTriple{\algebra{A}}$ be regular
    projective with generating set $M \in \SetTripleInj$.  Then
    $\module{P}_\red \in \cModules{\algebra{A}_\red}$ is projective
    with generating set $M_\Wobs \setminus M_\Null$.
\end{proposition}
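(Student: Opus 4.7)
The plan is to apply the reduction functor to a direct-sum decomposition that witnesses the regular projectivity of $\module{P}$, and then to identify the resulting free part explicitly in terms of $M_\Wobs \setminus M_\Null$.

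By \autoref{thm:RegularProjective} \ref{thm:RegularProjective_3}, the hypothesis provides a coisotropic $\algebra{A}$-module $\module{E}$ with $\algebra{A}^{(M)} \simeq \module{P} \oplus \module{E}$. First I would observe that the reduction functor commutes with finite direct sums: since $\oplus$ in $\cCoisoModTriple{\algebra{A}}$ is defined componentwise and quotients of $\field{k}$-modules respect finite direct sums, there is a natural isomorphism
\begin{equation*}
    (\module{F}_1 \oplus \module{F}_2)_\red
    \simeq
    (\module{F}_1)_\red \oplus (\module{F}_2)_\red.
\end{equation*}
Applied to the decomposition above, this yields $(\algebra{A}^{(M)})_\red \simeq \module{P}_\red \oplus \module{E}_\red$ in $\cModules{\algebra{A}_\red}$.

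The main step is the explicit identification of $(\algebra{A}^{(M)})_\red$. Unpacking the definition of the $\SetTripleInj$-free coisotropic module, this quotient equals
\begin{equation*}
    \algebra{A}_\Wobs^{(M_\Wobs)}
    \big/
    \bigl(\algebra{A}_\Wobs^{(M_\Null)} + \algebra{A}_\Null^{(M_\Wobs)}\bigr).
\end{equation*}
Dividing first by the summand $\algebra{A}_\Wobs^{(M_\Null)}$ kills the basis vectors indexed by $M_\Null$ and produces $\algebra{A}_\Wobs^{(M_\Wobs \setminus M_\Null)}$; dividing further by the image of $\algebra{A}_\Null^{(M_\Wobs)}$ then quotients each coefficient by $\algebra{A}_\Null$, yielding $(\algebra{A}_\red)^{(M_\Wobs \setminus M_\Null)}$. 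Combined with the first observation, one obtains
\begin{equation*}
    (\algebra{A}_\red)^{(M_\Wobs \setminus M_\Null)}
    \simeq
    \module{P}_\red \oplus \module{E}_\red,
\end{equation*}
which exhibits $\module{P}_\red$ as a direct summand of a free $\algebra{A}_\red$-module on the set $M_\Wobs \setminus M_\Null$; by the classical characterization of projective modules as direct summands of free modules this is exactly projectivity with the asserted generating set.

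I do not anticipate any genuine obstacle here: the argument reduces to (i) naturality of $\red$ on direct sums and (ii) a bookkeeping calculation for $(\algebra{A}^{(M)})_\red$. The only point that requires a bit of care is making sure the two nested quotients interact correctly — specifically, that $\algebra{A}_\Null^{(M_\Wobs)}$ descends to $\algebra{A}_\Null^{(M_\Wobs \setminus M_\Null)}$ inside $\algebra{A}_\Wobs^{(M_\Wobs)} / \algebra{A}_\Wobs^{(M_\Null)}$ — but since the $\Total$-component and the map $\iota_{\algebra{A}^{(M)}}$ play no role in the reduction functor, this is a direct computation with no hidden compatibility to check.
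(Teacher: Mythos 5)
Your proof is correct and follows essentially the same route as the paper: the paper's proof is a one-liner invoking the identity $\bigl(\algebra{A}^{(M)}\bigr)_\red = \bigl(\algebra{A}_\red\bigr)^{(M_\Wobs \setminus M_\Null)}$ together with \autoref{thm:RegularProjective}, the only cosmetic difference being that it uses the idempotent characterization \ref{thm:RegularProjective_4} where you use the direct-summand characterization \ref{thm:RegularProjective_3} plus additivity of $\red$. Your explicit two-step computation of the quotient $\algebra{A}_\Wobs^{(M_\Wobs)} \big/ \bigl(\algebra{A}_\Wobs^{(M_\Null)} + \algebra{A}_\Null^{(M_\Wobs)}\bigr)$ simply fills in the bookkeeping the paper leaves implicit, and it is correct.
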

\begin{proof}
    This is a consequence of
    $\bigl(\algebra{A}^{(M)}\bigr)_\red =
    \bigl(\algebra{A}_\red\bigr)^{(M_\Wobs \setminus M_\Null)}$ and
    \autoref{thm:RegularProjective}, \ref{thm:RegularProjective_4}.
\end{proof}
\begin{example}
    \label{ex:ClassicalProjModule}%
    Every classical projective module $\module{E}$ over a classical
    algebra $\algebra{A}$ can be considered as a regular projective
    coisotropic module $(\module{E},\module{E},0)$ over the
    coisotropic algebra $(\algebra{A},\algebra{A},0)$.  Note that
    in general there will be more regular projective modules over
    the coisotropic algebra $(\algebra{A},\algebra{A},0)$.
\end{example}
\begin{remark}
    As noted in \autoref{remark:Coisotropic_Sets}, the concepts of
    coisotropic index sets $\SetTriple$ and coisotropic sets
    $\CoisoSetTriple$ as in \cite{dippell.esposito.waldmann:2020a}
    do not agree.
    However, it can be shown that regular projective coisotropic
    modules can equivalently be described using the corresponding
    notion of $\CoisoSetTripleInj$-free coisotropic modules:
    Again they can be viewed as direct summands or images of
    idempotents on such free coisotropic modules and also a version
    of a dual basis exists.
    Nevertheless, it turns out that working with $\SetTripleInj$
    is more convenient later on when we construct a dual basis
    using geometric data.
\end{remark}

%
%

\section{Vector Bundles, Distributions and Quotients}

\autoref{thm:RegularProjective} shows that for a regular projective
module $\module{E}$ over a coisotropic algebra $\algebra{A}$ the
components $\module{E}_\Total$ and $\module{E}_\Wobs$ are projective
$\algebra{A}_\Total$- and $\algebra{A}_\Wobs$-modules, respectively.
Thus, in order to describe the regular projective modules over
$\Cinfty(M,C,D) = (\Cinfty(M), \Cinfty_D(M), \algebra{J}_C)$ as in
\autoref{ex:CoisotropicAlgebras}, \ref{ex:CoisotropicAlgebras_1}, it
makes sense to first focus on the $\Total$- and $\Wobs$-components
separately.

It is well-known that projective modules over the smooth functions on
a manifold are essentially sections of a vector bundle over said
manifold, see
e.g.~\cite[Thm.~11.32~and~Remark~thereafter]{nestruev:2003a}:
\begin{remark}[Serre-Swan Theorem]
    \label{Theorem:Serre_Swan}%
    Let $E \to N$ be a vector bundle over a smooth manifold.
    \begin{remarklist}
    \item The sections $\Secinfty(E)$ are a finitely generated
        projective module over $\Cinfty(N)$.
    \item If $N$ is connected, then taking sections yields an
        equivalence of categories
        \begin{equation}
            \Secinfty \colon \Vect(N) \to \Proj(\Cinfty(N)) ,
        \end{equation}
        where $\Vect(N)$ is the category of vector bundles over $N$
        together with vector bundle morphisms over the identity and
        $\Proj(\Cinfty(N))$ are the finitely generated projective
        modules over $\Cinfty(N)$ together with module morphisms.
    \end{remarklist}
\end{remark}

This already describes the $\Total$-components of the regular
projective modules of interest.  To investigate the
$\Wobs$-components, we first restrict ourselves to the submanifold
$C \subseteq M$: Given an integrable distribution $D \subseteq TC$ on
a smooth manifold $C$ we want to understand the projective modules
over $\Cinfty_D(C)$, the functions constant along the leaves of $D$.

%
%

\subsection{$D$-Connections and Quotients of Vector Bundles}

If $D \subseteq TC$ is simple, i.e. the leaf space $C \slash D$
carries the structure of a smooth manifold such that the projection
$\pi \colon C \to C \slash D$ is a surjective submersion, then
\begin{equation}
    \pi^* \colon \Cinfty(C \slash D) \to \Cinfty_D(C)
\end{equation}
is an isomorphism of algebras. Therefore, instead of understanding
modules over $\Cinfty_D(C)$ we can as well consider modules over
$\Cinfty(C \slash D)$.

It turns out that---similarly to $\Cinfty_D(C)$ being the functions
constant along the leaves---the projective $\Cinfty_D(C)$-modules are
sections on vector bundles over $C$ which are parallel with respect to
some covariant derivative.  However, only the part of the covariant
derivative in direction of $D$ plays a role.  This leads to the
following definition:
\begin{definition}[$D$-Connection]
    Let $D \subseteq TC$ be a smooth (possibly singular) distribution
    and let $E \to C$ be a vector bundle.  A map
    $\nabla\colon D \times \Secinfty(E) \to E$ is called
    \emph{$D$-connection on $E$} if it is the restriction of a
    covariant derivative $\nabla \colon TC \times \Secinfty(E) \to E$.
\end{definition}
Similar concepts, sometimes called \emph{partial connections}, are
used in \cite[Ch.~6]{bott:1972a}, \cite{kamber.tondeur:1975a},
\cite[Def.~2.2]{jotz.ortiz:2014a}.
A generalization of this notion is used in the context of Lie
algebroids, see e.g. \cite{fernandes:2002a}. Later, requiring
the $D$-connections to be restrictions of covariant derivatives on
$TC$ will ensure some smoothness also in direction transverse to the
leaves.  Indeed, in the case of a locally constant rank the following
statement is easy to prove using projections onto the \emph{subbundle}
$D$ (on connected components of $C$):
\begin{lemma}
    \label{Lemma:Covariant_Derivative_Restriction_Locally_Regular}%
    Let $D\subseteq TC$ be a smooth distribution with locally constant
    rank.  Let $\nabla\colon D \times \Secinfty(E) \to E$ be a
    $\field{R}$-bilinear map such that:
    \begin{lemmalist}
    \item We have $\nabla_{v_p} s \in E_p$ for $v_p \in D$.
    \item There is the following Leibniz rule in the second argument,
        i.e.
        \begin{equation}
            \nabla_{v_p} (fs) = f(p) \nabla_{v_p} s + v_p(f) s(p)
	\end{equation}
        for $v_p \in D$, $f \in \Cinfty(C)$, and $s\in\Secinfty(E)$.
    \item The induced map
        \begin{equation}
            \nabla\colon \Secinfty(D) \times \Secinfty(E) \to \Secinfty(E)
            \quad
            \textrm{with}
            \quad
            \nabla_X s \at{p} \coloneqq \nabla_{X(p)} s
	\end{equation}
        indeed maps into $\Secinfty(E)$.
    \end{lemmalist}
    Then $\nabla$ is a $D$-connection.
\end{lemma}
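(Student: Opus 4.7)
The plan is to extend $\nabla$ from $D$ to a genuine covariant derivative $\widetilde{\nabla}$ on all of $TC$ by choosing a smooth complementary subbundle $D' \subseteq TC$ so that $TC = D \oplus D'$, using the given $\nabla$ on the $D$-summand and an auxiliary globally defined covariant derivative $\nabla^{E}$ on $E$ on the $D'$-summand. Once such a $\widetilde{\nabla}$ is constructed, checking that it restricts to $\nabla$ on $D$ is immediate from the direct-sum decomposition, which is exactly the content required to call $\nabla$ a $D$-connection.

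\textbf{Construction.} Working separately on each connected component of $C$, where by the locally-constant-rank hypothesis $D$ has constant rank and is therefore a smooth subbundle of $TC$, I equip $C$ with a Riemannian metric (obtained via a partition of unity) and let $D'$ be its fibrewise orthogonal complement. This produces a smooth bundle decomposition $TC = D \oplus D'$ with associated smooth projections $P_D$ and $P_{D'}$. Independently, any covariant derivative $\nabla^{E}$ on $E \to C$ is chosen (again by partition of unity), and I set
\begin{equation*}
    \widetilde{\nabla}_X s \coloneqq \nabla_{P_D(X)} s + \nabla^{E}_{P_{D'}(X)} s
\end{equation*}
for $X \in \Secinfty(TC)$ and $s \in \Secinfty(E)$. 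That the first summand is a smooth section follows from property (iii) applied to $P_D \circ X \in \Secinfty(D)$.

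\textbf{Verification.} Properties (i) and (ii) together with the pointwise nature of $\nabla_{v_p}$ give $\nabla_{f(p)\, v_p} s = f(p)\, \nabla_{v_p} s$ from $\field{R}$-linearity in the first argument alone, so $\widetilde{\nabla}$ is $\Cinfty(C)$-linear in $X$. The Leibniz rule follows from property (ii) and the Leibniz rule for $\nabla^{E}$,
\begin{equation*}
    \widetilde{\nabla}_X(fs) = f\, \widetilde{\nabla}_X s + \bigl(P_D(X)(f) + P_{D'}(X)(f)\bigr)\, s = f\, \widetilde{\nabla}_X s + X(f)\, s,
\end{equation*}
using $P_D(X) + P_{D'}(X) = X$. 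Finally, for $v_p \in D_p$ one has $P_D(v_p) = v_p$ and $P_{D'}(v_p) = 0$, hence $\widetilde{\nabla}_{v_p} s = \nabla_{v_p} s$, showing that $\widetilde{\nabla}$ restricts to the given $\nabla$ on $D$.

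\textbf{Main obstacle.} The only delicate point is the existence of a smooth complementary subbundle $D'$: this is precisely where the locally-constant-rank hypothesis enters, since without it $D$ need not even be a smooth subbundle of $TC$, and there is no canonical smooth complement. Once $D'$ is available, the argument is a routine splitting-of-a-connection trick, and all remaining verifications reduce to unwinding the three hypotheses.
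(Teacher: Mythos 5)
Your proof is correct and follows essentially the same route the paper has in mind: the paper only sketches the argument as ``easy to prove using projections onto the subbundle $D$ (on connected components of $C$)'', and your construction $\widetilde{\nabla}_X s = \nabla_{P_D(X)} s + \nabla^E_{P_{D'}(X)} s$ with a metric-induced complement and an auxiliary covariant derivative is precisely that splitting trick. The verifications of tensoriality in $X$, the Leibniz rule, smoothness via hypothesis (iii), and the restriction property on $D$ are all carried out correctly.
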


In the singular setting there are maps fulfilling the requirements of
\autoref{Lemma:Covariant_Derivative_Restriction_Locally_Regular} which
are not restrictions of a covariant derivative on $TC$:
\begin{example}
    \label{Ex:Covariant_not_restriction}%
    Consider the (integrable) smooth singular distribution
    $D \subseteq T\field{R}$ defined by
    \begin{equation}
	D_p \coloneqq
        \begin{cases}
            T_p \field{R} & \textrm{if } p > 0 \\
            \lbrace 0_p \rbrace & \textrm{if } p \leq 0.
        \end{cases}
    \end{equation}
    We define
    $\nabla \colon D \times \Secinfty(T\field{R}) \to T \field{R}$ by
    \begin{equation}
	\nabla_{v \partial_x\at{p}}
	\bigl(g \partial_x\bigr)
	\coloneqq
	v \cdot g^\prime(p) \cdot \partial_x\at[\Big]{p}
	+
        v \cdot \frac{1}{p} \cdot g(p) \cdot \partial_x\at[\Big]{p}
    \end{equation}
    for $p>0$, $v\in\field{R}$, and $g \in \Cinfty(\field{R})$.  For
    $p \leq 0$ the only element of $D_p$ is $0_p$ and we set
    \begin{equation}
        \nabla_{0_p} \bigl(g \partial_x\bigr) \coloneqq 0_p .
    \end{equation}
    This fulfills the requirements of
    \autoref{Lemma:Covariant_Derivative_Restriction_Locally_Regular}.
    Indeed, it maps smooth sections to smooth sections because for
    $f \in \Cinfty(\field{R})$ we have $f \partial_x \in \Secinfty(D)$
    iff $f(p) = 0$ for all $p \leq 0$.  But clearly $\nabla$ can not
    be the restriction of a covariant derivative $\widetilde{\nabla}$
    in direction $T\field{R}$ because we would have
    \begin{equation}
        \widetilde{\nabla}_{\partial_x\at{p}} \bigl( \partial_x \bigr)
        =
        \frac{1}{p} \partial_x\at[\Big]{p}
    \end{equation}
    for all $p>0$.
\end{example}

In order to show projectivity of parallel sections on a vector bundle
over $C$ we want to identify them with sections over a suitable vector
bundle over $C \slash D$.  Denote by $\Parallel_{\gamma, p \to q}$ the
parallel transport along a curve $\gamma$ from $p$ to $q$ inside a
fixed leaf with respect to a given $D$-connection. We need to assume
that the parallel transport $\Parallel$ is path-independent and call
such $D$-connections \emph{holonomy-free}. Then we define the
following map on the fibered product
$E \times_\pi C = \lbrace (v_p,q) \subseteq E \times C \mid \pi(p) =
\pi(q) \rbrace$:
\begin{definition}[$\nabla$-Transport map]
    Let $D \subseteq TC$ be simple with projection
    $\pi \colon C \to C \slash D$ and $\nabla$ a holonomy-free
    $D$-connection on a vector bundle $E \to C$.  Then we call
    \begin{equation}
        \Transport^\nabla \colon E \times_\pi C \to E,
        \quad
        (v_p, q) \mapsto \Parallel_{\gamma, p \to q}(v_p),
    \end{equation}
    where $\gamma$ is any smooth curve inside the leaf
    $\pi^{-1}(\lbrace \pi (p) \rbrace) = \pi^{-1}(\lbrace \pi (q)
    \rbrace)$ connecting $p$ and $q$ the
    \emph{$\nabla$-transport map}.
\end{definition}
Note that the parallel transport along leaves does not depend on the
extension of $\nabla$ to $TC$.  Some properties of this map are
summarized in the next statement:
\begin{lemma}
    \label{Lemma:Properties_Transport}%
    Let $\nabla$ be a holonomy-free $D$-connection with transport map
    $\Transport^\nabla$.
    \begin{lemmalist}
    \item For $(v_p, q) \in E \times_\pi C$ one has
        $\Transport^\nabla(v_p,q) \in E_q$.
    \item The transport map $\Transport^\nabla$ is linear in the first
        argument, i.e. one has
        $\Transport^\nabla(\lambda v_p + \mu w_p, q) = \lambda
        \Transport^\nabla(v_p,q) + \mu \Transport^\nabla(w_p,q)$ for
        $(v_p,q),(w_p,q) \in E \times_\pi C$.
    \item For $v_p \in E$ one has $\Transport^\nabla(v_p,p) = v_p$.
    \item\label{Item:Transport_Composition} For
        $(v_p, q) \in E \times_\pi C$ and
        $r \in \pi^{-1}(\pi(\lbrace p \rbrace )) =
        \pi^{-1}(\pi(\lbrace q \rbrace ))$ one has
        $\Transport^\nabla\bigl( \Transport^\nabla(v_p,q),r\bigr) =
        \Transport^\nabla(v_p, r)$.
    \item The transport map $\Transport^\nabla$ is smooth.
    \end{lemmalist}
\end{lemma}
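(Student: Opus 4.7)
The plan is to dispatch (i)--(iv) as standard properties of parallel transport that do not use simplicity of $D$, and to concentrate the real work on (v), where simplicity and the extendability of $\nabla$ to a covariant derivative on all of $TC$ are both needed.

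For (i), the parallel transport operator $\Parallel_{\gamma, p \to q}$ is by construction a linear isomorphism $E_p \to E_q$, so the value of $\Transport^\nabla(v_p, q)$ lies in $E_q$. Statement (ii) is immediate because parallel transport is $\field{R}$-linear in the starting vector. For (iii), taking $\gamma$ to be the constant curve at $p$ yields the identity on $E_p$. For (iv), concatenate a curve $\gamma_1$ from $p$ to $q$ with a curve $\gamma_2$ from $q$ to $r$ inside the common leaf $\pi^{-1}(\pi(p)) = \pi^{-1}(\pi(q))$; the usual functoriality of parallel transport gives $\Parallel_{\gamma_2 \ast \gamma_1} = \Parallel_{\gamma_2} \circ \Parallel_{\gamma_1}$, and holonomy-freeness ensures the concatenated curve gives the same answer as any other curve from $p$ to $r$ in that leaf.

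The main obstacle is (v). The idea is to work in foliation charts. Since $D$ is simple with projection $\pi \colon C \to C/D$ a surjective submersion, around any point $q_0 \in C$ one obtains adapted coordinates $(x, y) \colon U \to V \times W \subseteq \field{R}^k \times \field{R}^{n-k}$ such that the leaves of $D\at{U}$ are the level sets $\{y = \text{const}\}$ and $\pi\at{U}$ becomes projection onto $W$. After shrinking $U$ and choosing a local trivialization of $E$ over $U$, for any pair $(p, q) \in U \times U$ with $\pi(p) = \pi(q)$ one can define a smooth curve $\gamma_{p,q} \colon [0,1] \to U$ inside the common leaf by linearly interpolating the $x$-coordinates while keeping the $y$-coordinates fixed. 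Crucially, $\gamma_{p,q}$ depends smoothly on $(p, q)$.

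Parallel transport along $\gamma_{p,q}$ is then the solution at time $t=1$ of the linear ODE $\nabla_{\dot\gamma_{p,q}(t)} s(t) = 0$ with initial condition $s(0) = v_p$, expressed in the chosen local trivialization. Smooth dependence of solutions of linear ODEs on parameters and initial conditions (applied to the parameters $(v_p, p, q)$, which enter the coefficients smoothly since $\nabla$ extends to a covariant derivative on $TC$) gives smoothness of $\Transport^\nabla$ on the portion of $E \times_\pi C$ lying over $U$. The holonomy-freeness assumption guarantees that these locally defined smooth representatives agree with the global $\Transport^\nabla$ regardless of the curve chosen, so the local smoothness statements glue to smoothness of $\Transport^\nabla$ on all of $E \times_\pi C$.
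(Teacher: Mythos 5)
Items (i)--(iv) and your local smoothness argument are fine and match the paper, which also dismisses the algebraic properties as clear and obtains smoothness near pairs of nearby points from smooth dependence of linear ODEs on parameters and initial conditions. The genuine gap is in your final ``gluing'' step for (v). Your chart construction produces a smooth representative of $\Transport^\nabla$ only near points $(v_p,q)\in E\times_\pi C$ for which $p$ and $q$ lie in a \emph{common} submersion chart $U$; for a pair $(v_p,q)$ with $p$ and $q$ in the same leaf but far apart, no such local representative has been constructed at all, so there is nothing to glue there. Holonomy-freeness only guarantees that the various locally defined transports agree with $\Transport^\nabla$ wherever they are defined (curve-independence); it does not extend their domain of definition, so ``the local smoothness statements glue to smoothness on all of $E\times_\pi C$'' is a non sequitur.

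What is missing is exactly the ingredient the paper uses: the composition property (iv). Given $p,q$ in one leaf, one chooses a path in the leaf from $p$ to $q$, covers it by finitely many chart-sized steps $p=r_0,r_1,\dots,r_k=q$, and writes $\Transport^\nabla(v_{p'},q')=\Transport^\nabla\bigl(\Transport^\nabla(\cdots\Transport^\nabla(v_{p'},r_1'),\cdots),q'\bigr)$ for nearby $(v_{p'},q')$, where the intermediate points must be chosen to depend smoothly on the base point and to stay in the leaf of $p'$ (for instance $r_i'=\sigma_i(\pi(p'))$ for local sections $\sigma_i$ of $\pi$, which exist since $\pi$ is a submersion). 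Each factor is then a transport between points in a common chart, hence smooth by your local step, and the composition is smooth. Without this iteration (or an explicit lemma producing a single product chart containing any two points of a leaf, which you neither state nor prove and which is of comparable difficulty), your proof of (v) only establishes smoothness on a neighborhood of the set of pairs $(v_p,p)$, not on all of $E\times_\pi C$.
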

\begin{proof}
    The algebraic properties are clear.  For smoothness note that by
    standard arguments from the theory of ordinary differential
    equations, for any $p \in C$ there exists an open neighborhood
    $U \subseteq C$ such that $\Transport^\nabla$ is smooth on
    $E\at{U} \times_\pi U$.  Using the property
    \ref{Item:Transport_Composition} we can iterate this statement to
    see that for any $p,q \in C$ there exist open neighborhoods of
    $U_p$ of $p$ and $U_q$ of $q$ such that $\Transport^\nabla$ is
    smooth on $E\at{U_p} \times_\pi U_q$.
\end{proof}

In \cite[Def.~2.1.1]{mackenzie:2005a} this is called a \emph{linear
  action of $R(\pi)$ on $E$} where $R(\pi) \subseteq C \times C$ is
the equivalence relation induced by $\pi$.  The $\nabla$-transport map
induces an equivalence relation on the vector bundle:
\begin{definition}[$\nabla$-Equivalence]
    Let $D \subseteq TC$ be simple with projection
    $\pi \colon C \to C \slash D$ and let $\nabla$ be a holonomy-free
    $D$-connection.  Then for $v_p,w_q \in E$ we set
    \begin{equation}
	v_p \sim_{\nabla} w_q \iff \pi(p) = \pi(q)
        \textrm{ and }
        \Transport^\nabla(v_p, q) = w_q .
    \end{equation}
\end{definition}
Its set of equivalence classes $E \slash \mathord{\sim}_\nabla$ offers
a good candidate for a projection onto $C \slash D$ which could turn
it into a vector bundle over $C \slash D$: We simply map an
equivalence class $[v_p]$ to $\pi(p)$.  Indeed, we can equip
$E \slash \mathord{\sim}_\nabla$ with the structure of a vector bundle
over $C \slash D$:
\begin{proposition}
    \label{Prop:Quotient_Vector_Bundles}%
    Let $D \subseteq TC$ be simple with projection
    $\pi \colon C \to C \slash D$ and let $\nabla$ be a holonomy-free
    $D$-connection on a vector bundle $E \to C$.  Then there exists a
    unique vector bundle structure on
    \begin{equation}
	\pr_\nabla \colon E \slash \mathord{\sim}_\nabla \to C \slash
        D,
        \quad
        [v_p] \mapsto \pi(p)
    \end{equation}
    such that the quotient map
    \begin{equation}
	\pi_\nabla \colon E \to E \slash \mathord{\sim}_\nabla,
        \quad
        v_p \mapsto [v_p]
    \end{equation}
    is both a submersion and a vector bundle morphism over $\pi$.
    Moreover, there exists an isomorphism
    \begin{equation}
        \label{Eq:Isomorphism_Quotient_VB_Pullback}
	\Xi \colon E
        \to
        \pi^\sharp \bigl( E \slash \mathord{\sim}_\nabla \bigr),
        \quad
	v_p \mapsto (p, [v_p])
    \end{equation}
    of vector bundles over the identity $\id_C$ and this isomorphism
    fulfills
    \begin{equation}
	\Parallel_{\gamma, p \to q}(v_p) = \Xi^{-1}(q,[v_p])
    \end{equation}
    for $v_p \in E$ and $\pi(p) = \pi(q)$.
\end{proposition}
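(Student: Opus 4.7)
The plan is to construct the vector bundle structure on $E/\mathord{\sim}_\nabla$ via local trivializations built from local sections of $\pi$. Since $\pi\colon C \to C/D$ is a surjective submersion, every $\bar{q}_0 \in C/D$ admits an open neighborhood $U \subseteq C/D$ with a smooth local section $\sigma\colon U \to C$ of $\pi$. I would then define
\begin{equation*}
    \Phi_\sigma \colon \sigma^\sharp E \to \pr_\nabla^{-1}(U),
    \quad
    (\bar{q}, w) \mapsto [w],
\end{equation*}
where $w \in E_{\sigma(\bar{q})}$. This map is a bijection: surjectivity holds because any class $[v_p]$ with $\pi(p) \in U$ has the representative $\Transport^\nabla\bigl(v_p, \sigma(\pi(p))\bigr) \in E_{\sigma(\pi(p))}$; injectivity holds because two distinct elements of $E_{\sigma(\bar{q})}$ cannot be $\nabla$-equivalent, as parallel transport along the trivial loop at $\sigma(\bar q)$ is the identity.

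Next I would declare each $\Phi_\sigma$ to be a vector bundle chart and check that the transition maps are smooth. For two sections $\sigma_1, \sigma_2$ over $U_1 \cap U_2$, the transition $\Phi_{\sigma_2}^{-1} \circ \Phi_{\sigma_1}$ sends $(\bar{q}, w)$ to $\bigl(\bar{q}, \Transport^\nabla(w, \sigma_2(\bar{q}))\bigr)$, which is smooth and fiberwise linear by \autoref{Lemma:Properties_Transport}. This gives $E/\mathord{\sim}_\nabla$ the structure of a vector bundle over $C/D$. The quotient map $\pi_\nabla$ reads, in a chart, as $v_p \mapsto \bigl(\pi(p), \Transport^\nabla(v_p, \sigma(\pi(p)))\bigr)$, which is smooth (using smoothness of $\Transport^\nabla$ and $\sigma \circ \pi$ being smooth locally after choosing further submersion charts on $C$), fiberwise linear, and a submersion since it is split locally by $(\bar{q}, w) \mapsto w$. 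Uniqueness of the vector bundle structure follows since the requirement that $\pi_\nabla$ be a submersive vector bundle morphism over $\pi$ forces the charts $\Phi_\sigma$ to be diffeomorphisms.

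For the isomorphism $\Xi$, I would note first that $\Xi$ is a fiberwise linear bijection: for fixed $p$, each class $[w] \in (E/\mathord{\sim}_\nabla)_{\pi(p)}$ has the unique representative $\Transport^\nabla(w, p) \in E_p$, so $\Xi^{-1}(p, [w]) = \Transport^\nabla(w, p)$. Smoothness of $\Xi$ reduces to smoothness of $\pi_\nabla$ (already shown), while smoothness of $\Xi^{-1}$ follows from the smoothness of $\Transport^\nabla$ via \autoref{Lemma:Properties_Transport}. Finally the stated formula $\Parallel_{\gamma, p \to q}(v_p) = \Xi^{-1}(q, [v_p])$ is just the definition of $\Transport^\nabla$ unwound through $\Xi^{-1}$.

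The main obstacle I anticipate is the smoothness of the transition maps and of $\pi_\nabla$: one must carefully combine the submersion structure of $\pi$ (producing smooth local sections $\sigma$) with the smoothness of $\Transport^\nabla$ on $E \times_\pi C$. Once this is organized, the remaining verifications—linearity, compatibility with $\pi$, and uniqueness—are routine bookkeeping.
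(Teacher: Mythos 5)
Your proposal is correct, but it takes a genuinely different route from the paper. The paper's own proof is a one-liner: it observes that \autoref{Lemma:Properties_Transport} says precisely that $\Transport^\nabla$ is a smooth linear action of the equivalence relation $R(\pi) \subseteq C \times C$ on $E$, and then invokes Proposition~4.1 of Higgins--Mackenzie, which produces the quotient vector bundle structure, the submersive bundle morphism $\pi_\nabla$, and the identification \eqref{Eq:Isomorphism_Quotient_VB_Pullback} in that general framework. You instead re-prove this special case by hand: you use simplicity of $D$ to get smooth local sections $\sigma$ of the submersion $\pi$, build trivializations $\Phi_\sigma$ of $E \slash \mathord{\sim}_\nabla$ whose transition maps are $(\bar q, w) \mapsto \bigl(\bar q, \Transport^\nabla(w, \sigma_2(\bar q))\bigr)$, and read off smoothness, linearity, the submersion property of $\pi_\nabla$ (via the local splitting), uniqueness, and the formula for $\Xi^{-1}$ from the properties of $\Transport^\nabla$ collected in \autoref{Lemma:Properties_Transport}. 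The checks you list are the right ones and go through; two small points you gloss over are that the chart-defined smooth structure on $E \slash \mathord{\sim}_\nabla$ should be noted to be well defined (Hausdorff, second countable -- immediate since $C\slash D$ is a manifold and the transitions are smooth), and that in the uniqueness step the essential fact is simply that a smooth fiberwise-linear bijective bundle map over the identity is automatically a vector bundle isomorphism, so the submersion hypothesis is not really needed there. What each approach buys: the paper's citation is shorter and situates the construction in the general theory of linear actions of $R(\pi)$, while your argument is self-contained and makes explicit exactly where simplicity of $D$ (existence of local sections of $\pi$) and the smoothness of $\Transport^\nabla$ enter.
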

\begin{proof}
    Using \autoref{Lemma:Properties_Transport},
    \cite[Prop.~4.1]{higgins.mackenzie:1990a}
    gives the result.
\end{proof}

A similar quotient is considered in the context of infinitesimal ideal
systems in \cite[Thm.~3.7]{zambon:2008a} and in
\cite[Ch.~6]{jotz.ortiz:2014a}: If we turn a given vector bundle
$E \to C$ into a Lie algebroid with trivial anchor and trivial Lie
bracket and use $(D, C \times \lbrace 0 \rbrace, \nabla)$ as
infinitesimal ideal system, then the constructions of
\cite[Cor.~6.3]{jotz.ortiz:2014a} and
\autoref{Prop:Quotient_Vector_Bundles} coincide.
\begin{remark}
    Other examples of linear actions of $R(\pi)$ on $E$, i.e. maps
    $\Transport$ with the properties of
    \autoref{Lemma:Properties_Transport}, can be found in the context
    of group actions: Let $\pi \colon C \to C \slash G$ be a left
    principal fiber bundle with structure group $G$ and principal
    action $\acts^C$. Moreover, let $E \to C$ be a $G$-equivariant
    vector bundle with left action $\acts^E$.  Then using the ratio
    map $\mathrm{r} \colon C \times_\pi C \to G$ defined by
    $\mathrm{r}(p,q) \acts^C p = q$ we can set
    \begin{equation}
        \Transport(v_p, q) \coloneqq \mathrm{r}(p,q) \acts^E v_p .
    \end{equation}
    With $v_p \sim_\Transport w_q \iff \Transport(v_p,q) = w_q$ we
    obtain
    $E \slash \mathord{\sim}_\Transport = E \slash G \to C \slash G$
    as quotient vector bundle.
\end{remark}

Ultimately, we are interested in sections which are parallel in the
direction of a distribution:
\begin{definition}[$\nabla$-Parallel sections]
    Let $D \subseteq TC$ be a distribution and let $\nabla$ be a
    $D$-connection on a vector bundle $E \to C$.  Then the
    \emph{$\nabla$-parallel sections} of $E$ are
    \begin{equation}
	\Secinfty_D(E,\nabla)
	\coloneqq
        \lbrace
        s \in \Secinfty(E)
	\mid
        \nabla_X s = 0 \quad \forall X \in \Secinfty(D)
        \rbrace.
    \end{equation}
\end{definition}
As to be expected this condition can be integrated to a condition
involving the parallel transport,
cf. \cite[Prop.~6.5]{jotz.ortiz:2014a}:
\begin{lemma}
    \label{Lemma:Parallel_Sections_Parallel_Transport}%
    Let $D \subseteq TC$ be an integrable distribution and let
    $\nabla$ be a $D$-connection on a vector bundle $E \to C$.  Then
    \begin{equation}
	\Secinfty_D(E, \nabla)
	=
        \lbrace
        s \in \Secinfty(E)
        \mid
        \Parallel_{\gamma, a \to b}(s(\gamma(a)))
	=
        s(\gamma(b))
        \textrm{ for all smooth curves }
        \gamma
        \textrm{ inside a leaf of }
        D
	\rbrace.
    \end{equation}
\end{lemma}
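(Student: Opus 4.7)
The plan is to prove the two inclusions separately, using the standard equivalence between a parallel section condition and invariance under parallel transport, adapted to the partial-connection setting.

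For the inclusion $\subseteq$, I would start with $s \in \Secinfty_D(E, \nabla)$ and a smooth curve $\gamma\colon [a,b] \to C$ contained in a single leaf of $D$. Since $D$ is integrable and $\gamma$ lies in a leaf, $\dot{\gamma}(t) \in D_{\gamma(t)}$ for all $t$. By the definition of parallel transport, $t \mapsto \Parallel_{\gamma, a \to t}(s(\gamma(a)))$ is the unique solution of the linear ODE $\frac{\nabla}{dt}\sigma(t) = 0$ with initial condition $\sigma(a) = s(\gamma(a))$. Applying the connection to the section $s$ along $\gamma$ gives $\frac{\nabla}{dt}(s \circ \gamma)(t) = \nabla_{\dot{\gamma}(t)} s$; locally extending $\dot{\gamma}$ to a section $X \in \Secinfty(D)$ near each point, the assumption $\nabla_X s = 0$ forces this expression to vanish. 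Thus $s \circ \gamma$ satisfies the same ODE with the same initial condition, so by uniqueness $s(\gamma(b)) = \Parallel_{\gamma, a \to b}(s(\gamma(a)))$.

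For the converse inclusion $\supseteq$, I would take $s$ preserved by parallel transport along all curves in leaves and show $\nabla_X s = 0$ for every $X \in \Secinfty(D)$. Fix $p \in C$ and let $\gamma\colon (-\epsilon, \epsilon) \to C$ be an integral curve of $X$ through $p$ with $\gamma(0) = p$; such a curve exists locally and lies in the leaf through $p$ by integrability of $D$. The hypothesis gives $s(\gamma(t)) = \Parallel_{\gamma, 0 \to t}(s(p))$ for all small $t$. Differentiating this identity at $t=0$ produces $\nabla_{\dot{\gamma}(0)} s = 0$, i.e.\ $(\nabla_X s)(p) = 0$. Since $p$ was arbitrary, $\nabla_X s = 0$.

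The main obstacle—and it is only mild—is the bookkeeping around the fact that $\nabla$ is only a $D$-connection, so $\nabla_X s$ is a priori defined only for $X \in \Secinfty(D)$, while parallel transport is phrased in terms of tangent vectors $\dot{\gamma}(t)$ to curves in leaves. One must verify that $\dot{\gamma}(t) \in D$ (which follows from $\gamma$ lying in a leaf plus integrability of $D$) and that one can locally extend such tangent vectors to smooth sections of $D$, so that $\nabla_{\dot{\gamma}(t)} s$ makes unambiguous pointwise sense and matches the covariant derivative along the curve appearing in the ODE characterization of parallel transport. Once this is in place, both directions reduce to uniqueness of solutions to the parallel transport ODE, exactly as in the classical Riemannian or vector-bundle case cited in \cite{jotz.ortiz:2014a}.
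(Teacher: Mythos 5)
Your argument is correct, and there is nothing in the paper to diverge from: the authors state this lemma without proof, referring to \cite[Prop.~6.5]{jotz.ortiz:2014a}, and your ODE-uniqueness argument is precisely the standard one being invoked. One small simplification regarding the bookkeeping you flag: since a $D$-connection is by definition the restriction of a covariant derivative on all of $TC$, the pointwise expression $\nabla_{\dot\gamma(t)}s$ makes sense as it stands and agrees with the covariant derivative of $s\circ\gamma$ along the curve; the only thing you need from the hypothesis $\nabla_X s=0$ for all $X\in\Secinfty(D)$ is that every vector in $D$ is the value of some smooth section of $D$ (true for smooth, possibly singular, distributions), after which tensoriality of the ambient covariant derivative gives $\nabla_{\dot\gamma(t)}s=0$, and both inclusions reduce, exactly as you say, to uniqueness of solutions of the parallel-transport ODE.
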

With \eqref{Eq:Isomorphism_Quotient_VB_Pullback} at hand, we see that
the $\nabla$-parallel sections are nothing else but the sections over
the quotient vector bundle:
\begin{lemma}
    \label{Lemma:Isomorphism_Invariant_Sections_Quotient_Sections}%
    Let $D \subseteq TC$ be simple with projection
    $\pi \colon C \to C \slash D$ and let $\nabla$ be a holonomy-free
    $D$-connection on a vector bundle $E \to C$.  Then with $\Xi$ as
    in \eqref{Eq:Isomorphism_Quotient_VB_Pullback}
    \begin{equation}
	\Secinfty(E \slash \mathord{\sim}_\nabla) \to \Secinfty_D(E, \nabla),
	\quad
	r \mapsto \bigl( C \ni p \mapsto
	\Xi^{-1}\bigl( p, r(\pi(p))\bigr) \in E \bigr)
    \end{equation}
    is an isomorphism of modules over the isomorphism
    $\pi^* \colon \Cinfty(C \slash D) \to \Cinfty_D(C)$ of algebras.
    Its inverse is given by
    \begin{equation}
        \label{Eq:Isomorphism_Invariant_Sections_Inverse}
	\Secinfty_D(E, \nabla)
        \to
        \Secinfty(E \slash \mathord{\sim}_\nabla),
	\quad
	s
        \mapsto
        \bigl(
        C \slash D \ni q \mapsto [s(\pi^{-1}(q))]
	\in E \slash \mathord{\sim}_\nabla
        \bigr),
    \end{equation}
    where $\pi^{-1}(q)$ is an arbitrary element in
    $\pi^{-1}(\lbrace q \rbrace)$.
\end{lemma}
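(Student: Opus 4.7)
The plan is to verify that the two maps in the statement are well-defined, mutually inverse, and compatible with the $\Cinfty(C/D)$- respectively $\Cinfty_D(C)$-module structures via $\pi^*$. Denote the two candidate maps by $\Phi$ (from the quotient bundle sections into parallel sections) and $\Psi$ (its claimed inverse).

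First I would check well-definedness of $\Phi$. Smoothness of $\Phi(r)$ is immediate from smoothness of $\Xi^{-1}$, $\pi$, and $r$. To see that $\Phi(r)$ is $\nabla$-parallel I would invoke \autoref{Lemma:Parallel_Sections_Parallel_Transport}: for any smooth curve $\gamma$ in a leaf with $\gamma(a)=p$, $\gamma(b)=q$, we have $\pi(p)=\pi(q)$, hence $r(\pi(p))=r(\pi(q))$, and so by the key identity $\Parallel_{\gamma, p\to q}(v_p)=\Xi^{-1}(q,[v_p])$ from \autoref{Prop:Quotient_Vector_Bundles} applied to $v_p = \Phi(r)(p) = \Xi^{-1}(p,r(\pi(p)))$, one obtains
\begin{equation*}
\Parallel_{\gamma, p\to q}(\Phi(r)(p))
= \Xi^{-1}\bigl(q,[\Phi(r)(p)]\bigr)
= \Xi^{-1}\bigl(q,r(\pi(q))\bigr)
= \Phi(r)(q),
\end{equation*}
where I use that $[\Xi^{-1}(p,y)] = y$ by the very definition of $\Xi$.

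Next I would handle $\Psi$. For independence from the chosen preimage, if $p_1, p_2 \in \pi^{-1}(\lbrace q\rbrace)$ lie in the same leaf, then by \autoref{Lemma:Parallel_Sections_Parallel_Transport} the parallel transport along any connecting curve sends $s(p_1)$ to $s(p_2)$, hence $s(p_1)\sim_\nabla s(p_2)$ and $[s(p_1)]=[s(p_2)]$. For smoothness of $\Psi(s)$ I would use that $\pi$ is a surjective submersion, so locally there exist smooth sections $\sigma$ of $\pi$; then locally $\Psi(s) = \pi_\nabla \circ s \circ \sigma$, a composition of smooth maps. Since smoothness is a local property and the value of $\Psi(s)$ does not depend on the chosen local section $\sigma$, this suffices.

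With well-definedness in place, the mutual inverse property is a direct computation. For $p\in C$,
\begin{equation*}
\Phi(\Psi(s))(p)
= \Xi^{-1}\bigl(p, [s(p)]\bigr)
= s(p),
\end{equation*}
because $\Xi(s(p))=(p,[s(p)])$. Conversely, for $q\in C/D$ and any $p\in\pi^{-1}(\lbrace q\rbrace)$,
\begin{equation*}
\Psi(\Phi(r))(q)
= [\Phi(r)(p)]
= \bigl[\Xi^{-1}(p,r(q))\bigr]
= r(q).
\end{equation*}
Finally, compatibility with the module structure follows from fiberwise linearity of $\Xi^{-1}$: for $f\in\Cinfty(C/D)$ and $r\in\Secinfty(E/\mathord{\sim}_\nabla)$, one has $\Phi(f\cdot r)(p)=\Xi^{-1}(p,f(\pi(p))r(\pi(p)))=(\pi^*f)(p)\,\Phi(r)(p)$. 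I expect the mildly subtle step to be the smoothness of $\Psi(s)$, since the formula defining $\Psi(s)$ involves choosing preimages under $\pi$; this is resolved by passing to local sections of the submersion $\pi$.
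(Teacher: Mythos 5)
Your proof is correct and uses exactly the ingredients the paper intends: the transport identity $\Parallel_{\gamma, p \to q}(v_p) = \Xi^{-1}(q,[v_p])$ from \autoref{Prop:Quotient_Vector_Bundles} together with the parallel-transport characterization of $\Secinfty_D(E,\nabla)$ in \autoref{Lemma:Parallel_Sections_Parallel_Transport}. The paper states the lemma without an explicit proof, and your argument (including the well-definedness and smoothness of the inverse via local sections of the submersion $\pi$) fills in precisely the expected details.
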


%
%

\subsection{The Module of Invariant Sections}
\label{subsec:ModuleSecInftyENabla}

Using \autoref{Theorem:Serre_Swan} and applying
\autoref{Lemma:Isomorphism_Invariant_Sections_Quotient_Sections} to
both the sections and the dual sections we finally obtain a dual basis
for invariant sections:
\begin{proposition}
    \label{Prop:Dual_Basis_Invariant_Sections}%
    Let $D \subseteq TC$ be simple with projection
    $\pi \colon C \to C \slash D$ and let $\nabla$ be a holonomy-free
    $D$-connection on a vector bundle $E \to C$.  Then there exists a
    finite index set $I$ and $\nabla$-parallel sections
    $e_i \in \Secinfty_D(E , \nabla)$ and
    $e^i \in \Secinfty_D(E^*, \nabla^*)$ for $i \in I$ such that
    \begin{equation}
	v_p = \sum_{i \in I} e_i(p) e^i(v_p)
    \end{equation}
    for all $v_p \in E$.
\end{proposition}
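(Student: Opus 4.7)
The plan is to reduce the statement to the classical Serre-Swan theorem applied to the quotient vector bundle $E \slash \mathord{\sim}_\nabla \to C \slash D$ from \autoref{Prop:Quotient_Vector_Bundles}, and then transport the resulting finite dual basis back to $E$ using the isomorphism in \autoref{Lemma:Isomorphism_Invariant_Sections_Quotient_Sections}.

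First, I would observe that the $D$-connection $\nabla$ on $E$ induces in the standard way a dual $D$-connection $\nabla^*$ on $E^*$ whose parallel transport is the inverse transpose of the parallel transport of $\nabla$; in particular $\nabla^*$ is again holonomy-free. Thus \autoref{Prop:Quotient_Vector_Bundles} applies to both $E$ and $E^*$, producing vector bundles $E \slash \mathord{\sim}_\nabla$ and $E^* \slash \mathord{\sim}_{\nabla^*}$ over $C \slash D$. Using the isomorphism $\Xi$ from \eqref{Eq:Isomorphism_Quotient_VB_Pullback} fiberwise, one checks that the natural pairing of $E$ with $E^*$ descends to a perfect pairing of these quotients, giving an isomorphism $E^* \slash \mathord{\sim}_{\nabla^*} \cong (E \slash \mathord{\sim}_\nabla)^*$ of vector bundles over $C \slash D$.

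Next, since $C \slash D$ is a smooth manifold and $E \slash \mathord{\sim}_\nabla \to C \slash D$ is an ordinary vector bundle, the classical Serre-Swan theorem (\autoref{Theorem:Serre_Swan}) yields that $\Secinfty(E \slash \mathord{\sim}_\nabla)$ is a finitely generated projective $\Cinfty(C \slash D)$-module. Such modules admit a finite dual basis, so I can pick $\bar e_i \in \Secinfty(E \slash \mathord{\sim}_\nabla)$ and $\bar e^{\,i} \in \Secinfty\bigl((E \slash \mathord{\sim}_\nabla)^*\bigr) \cong \Secinfty(E^* \slash \mathord{\sim}_{\nabla^*})$ for $i$ in a finite set $I$ with the property
\begin{equation*}
    w_q = \sum_{i \in I} \bar e_i(q) \, \bar e^{\,i}(w_q)
\end{equation*}
for all $w_q \in E \slash \mathord{\sim}_\nabla$. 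Applying the inverse isomorphism of \autoref{Lemma:Isomorphism_Invariant_Sections_Quotient_Sections} to both the bundle and its dual produces $e_i \in \Secinfty_D(E, \nabla)$ and $e^i \in \Secinfty_D(E^*, \nabla^*)$.

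Finally, the required identity $v_p = \sum_{i \in I} e_i(p) \, e^i(v_p)$ is obtained by evaluating the dual basis relation at the class $[v_p]$ and transporting back through $\Xi$; the compatibility of $\Xi$ with the pairing ensures that the scalars match. The main obstacles I anticipate are purely bookkeeping: verifying that $\nabla^*$ is holonomy-free and that the canonical pairing really descends to the announced identification $E^* \slash \mathord{\sim}_{\nabla^*} \cong (E \slash \mathord{\sim}_\nabla)^*$, and checking that the finite index set $I$ can indeed be chosen globally (which follows from finite generation of the projective $\Cinfty(C \slash D)$-module, independently of any connectedness assumption on $C \slash D$).
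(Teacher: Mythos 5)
Your proposal is correct and follows essentially the same route as the paper: apply the classical Serre--Swan theorem to the quotient bundle $E \slash \mathord{\sim}_\nabla$ over $C \slash D$ and pull the resulting finite dual basis back through \autoref{Lemma:Isomorphism_Invariant_Sections_Quotient_Sections}, concluding via the fiberwise bijectivity of the quotient map. The paper merely streamlines the dual side by setting $e^i(v_p) \coloneqq f^i([v_p])$ directly instead of first spelling out the identification $E^* \slash \mathord{\sim}_{\nabla^*} \cong (E \slash \mathord{\sim}_\nabla)^*$, which is only a presentational difference.
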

\begin{proof}
    Let $I$ be a finite index set and let
    $f_i \in \Secinfty(E \slash \mathord{\sim}_\nabla)$ and
    $f^i \in \Secinfty\bigl( (E \slash \mathord{\sim}_\nabla)^*\bigr)$
    for $i \in I$ such that
    \begin{equation*}
	u_x = \sum_{i \in I} f_i(x) f^i(u_x)
    \end{equation*}
    for all $u_x \in E \slash \mathord{\sim}_\nabla$ at
    $x \in C \slash D$.
    \autoref{Lemma:Isomorphism_Invariant_Sections_Quotient_Sections}
    gives $e_i \in \Secinfty_D(E,\nabla)$ with
    $[e_i(p)] = f_i(\pi(p))$ for all $p \in C$.  Setting
    $e^i(v_p) \coloneqq f^i([v_p])$ gives
    $e^i \in \Secinfty_D(E^*, \nabla^*)$ with
    \begin{equation*}
	\Bigl[ \sum_{i \in I} e_i(p) e^i(v_p) \Bigr]
        =
        \sum_{i \in I} [e_i(p)] f^i([v_p]) = [v_p]
    \end{equation*}
    for all $v_p \in E$. But then also
    \begin{equation*}
	\sum_{i \in I} e_i(p) e^i(v_p) = v_p
    \end{equation*}
    because
    $\pi_\nabla \colon E_p \to \bigl( E \slash \mathord{\sim}_\nabla
    \bigr)_{\pi(p)}$ is bijective.
\end{proof}

For singular $D$ we are not guaranteed that taking parallel sections
gives us a finitely generated projective $\Cinfty_D(C)$-module.
Nevertheless, in one dimension we have the following positive result:
\begin{example}
    \label{Ex:Singular_Projective_1D}%
    Let $C = \field{R}$ and $E \to \field{R}$ be a vector bundle.  For
    dimensional reasons the curvature of any covariant derivative on
    $E$ vanishes and therefore the parallel transport is always
    path-independent.  In particular, if $D \subseteq TC$ is some
    integrable distribution and $\nabla$ is a $D$-connection on $E$,
    then it has an extension to a holonomy-free $TC$-connection
    $\hat{\nabla}$.  Therefore by
    \autoref{Prop:Dual_Basis_Invariant_Sections} there exists a finite
    dual basis $e_i \in \Secinfty_{TC}(E,\hat{\nabla})$ and
    $e^i \in \Secinfty_{TC}(E^*, \hat{\nabla}^*)$ for $i \in I$ with
    \begin{equation}
        v_p = \sum_{i \in I} e_i(p) e^i(v_p)
    \end{equation}
    for all $v_p \in E$. In particular $e_i \in \Secinfty_D(E,\nabla)$
    and
    $e^i \in \Hom_{\Cinfty_D(C)}(\Secinfty_D(E,\nabla), \Cinfty_D(C))$
    yield a dual basis of $\Secinfty_D(E, \nabla)$. Therefore
    $\Secinfty_D(E, \nabla)$ is a finitely generated projective
    $\Cinfty_D(C)$-module.
\end{example}

Our condition that a $D$-connection arises from a covariant derivative
defined on all tangent vectors becomes crucial in the singular
case. Without this assumption, the module $\Secinfty_D(E, \nabla)$
might fail to be finitely generated and projective:
\begin{example}
    We reconsider the situation from
    \autoref{Ex:Covariant_not_restriction}, i.e.
    \begin{equation}
        D_p \coloneqq
        \begin{cases}
            T_p \field{R} & \textrm{if } p > 0 \\
            \lbrace 0_p \rbrace & \textrm{if } p \leq 0,
        \end{cases}
    \end{equation}
    and the $\nabla$ which was not the restriction of a covariant
    derivative. Then the parallel sections are given by
    \begin{equation}
        A
        \coloneqq
        \lbrace
        f \in \Cinfty(\field{R})
        \mid
        f(x) = 0
        \textrm{ for }
        x > 0
        \rbrace.
    \end{equation}
    This $\Cinfty_D(C)$-module $A$ is neither finitely generated nor
    projective.  In \cite[Prop.~5.3]{drager.lee.park.richardson:2012a}
    it is shown that $A$ is not a finitely generated
    $\Cinfty(\field{R})$-module. Thus it is not a finitely generated
    $\Cinfty_D(C)$-module either.  To see that it is not projective,
    assume that there exists a dual basis $e_i \in A$,
    $e^i \in \Hom_{\Cinfty_D(C)}(A, \Cinfty_D(C))$ for some (possibly
    infinite) index set $I$ such that for fixed $s \in A$ only
    finitely many $e^i(s) \neq 0$ and
    \begin{equation}
        s = \sum_{i \in I} e_i e^i(s).
    \end{equation}
    If $r \in A$ and $r(x) = 0$ for some $x < 0$, then we can
    choose some bump function $\zeta \in A \subseteq \Cinfty_D(C)$
    with $\zeta(x) = 1$.  It follows that
    \begin{equation}
        e^i(r)\at{x}
        =
        \zeta(x) e^i(r)\at{x} = e^i(\zeta r)\at{x}
        =
        r(x) e^i(\zeta)\at{x} = 0 .
    \end{equation}
    Now for $\hat{s}(x) \coloneqq \chi(-x)$ with $\chi$ as above we
    have $\hat{s} \in A$ and therefore only finitely many of the
    $e^i(\hat{s}) \neq 0$.  We denote the indices by
    $j_1, \ldots, j_n$. If $s \in A$ is arbitrary, then for fixed
    $x < 0$
    \begin{equation}
        s^\prime
        \coloneqq
        s - \frac{s(x)}{\hat{s}(x)} \zeta \hat{s} \in A
    \end{equation}
    fulfills $s^\prime(x) = 0$ and therefore $e^i(s^\prime)\at{x} = 0$
    for all $i \in I$.  This shows that because
    $\frac{s(x)}{\hat{s}(x)} \zeta \in A\subseteq \Cinfty_D(C)$ we
    have
    \begin{equation}
        e^i(s)\at{x}
        =
        \frac{s(x)}{\hat{s}(x)} \zeta(x) e^i(\hat{s})\at{x}
        =
        \frac{s(x)}{\hat{s}(x)} e^i(\hat{s})\at{x} .
    \end{equation}
    In particular, $e^i(s)\at{x} = 0$ for all
    $i \notin \lbrace j_1, \ldots, j_n \rbrace$, all $x < 0$ and all
    $s \in A$. As $e_i\at{y} = 0$ for all $y \geq 0$ we have
    $e_i e^i(s) = 0$ for all
    $i \notin \lbrace j_1, \ldots, j_n \rbrace$ and all $s \in A$
    which implies
    \begin{equation}
        s = \sum_{k = 1}^n e_{j_k} e^{j_k}(s) .
    \end{equation}
    In particular, $A$ would be finitely generated in contradiction to
    what we saw before. Therefore $A$ can not be projective.
\end{example}

The situation in the one-dimensional case was very special as we could
extend any $D$-connection to a covariant derivative with globally
path-independent parallel transport.  The next example considers a
situation in which this is not the case:
\begin{example}
    \label{Ex:Singular_Projective_2D}%
    Let $C = \field{R}^2$, let $E \to C$ be a vector bundle and let
    $D\subseteq TC$ be the integrable distribution defined by
    \begin{equation}
	D_p \coloneqq
        \begin{cases}
            T_p \field{R}^2 & \textrm{if } p_x > 0 \\
            \lbrace 0_p \rbrace & \textrm{if } p_x \leq 0.
        \end{cases}
    \end{equation}
    Let $\nabla$ be a holonomy-free $D$-connection.  For $p,q \in C$
    we define a smooth path connecting $p$ and $q$ by
    \begin{equation}
	\gamma_{p,q}(t) \coloneqq (1-t)p + tq .
    \end{equation}
    Clearly $\gamma_{p,q}$ depends smoothly on $p$ and $q$ and we
    obtain a smooth map
    \begin{equation}
	\Transport^\prime \colon E \times C \to E,
        \quad
	(v_p, q) \coloneqq \Parallel^\nabla_{\gamma_{p,q}, 0 \to 1}(v_p) .
    \end{equation}
    Fixing some point $p_0$ in the non-trivial leaf
    $\field{R}^+ \times \field{R}$ of $D$, for example $p_0 = (1,0)$,
    and a basis $b_1, \ldots, b_n$ of $E_{p_0}$ with dual basis
    $b^1, \ldots, b^n$ we can define sections by setting
    \begin{equation}
	e_i(p) \coloneqq \Transport^\prime(b_i, p) \quad \text{and} \quad
	e^i(p) \coloneqq b^i\bigl(\Transport^\prime(v_p, p_0)\bigr)
    \end{equation}
    for $p \in C$. Then
    \begin{equation}
	\sum_{i=1}^n e_i(p) e^i(v_p)
	=
        \sum_{i=1}^n \Transport^\prime
        \Bigl(
	b^i\bigl(\Transport^\prime(v_p, p_0)\bigr) b_i, p
        \Bigr)
	=
        \Transport^\prime\bigl(\Transport^\prime(v_p, p_0), p\bigr)
	=
        v_p
    \end{equation}
    as $\gamma_{p,p_0}$ is the reverse curve to
    $\gamma_{p_0,p}$. Moreover, the $e_i,e^i$ really are a dual basis
    of $\Secinfty_D(E, \nabla)$ because $\Transport^\prime$ restricted
    to $\field{R}^+ \times \field{R}$ is just $\Transport^\nabla$.
\end{example}

%
%

\section{Projective Coisotropic $\Cinfty(M,C,D)$-Modules}

Now that we have described the finitely generated projective modules
over $\Cinfty_D(C)$ we again look at the whole coisotropic algebra
$\Cinfty(M,C,D)$ and want to find a description of its regular
projective coisotropic modules.  There are two main steps we have to
take: First of all, we need to extend from our submanifold
$C \subseteq M$ to $M$ in order to pass from $\Cinfty_D(C)$-modules to
$\Cinfty_D(M)$-modules.  As a second step we need to incorporate the
relations between the $\Total$-, the $\Wobs$- and the
$\Null$-components which become evident in \autoref{prop:DualBasis}.

%
%

\subsection{Regular Projective Coisotropic $\Cinfty(M,C,D)$-Modules}

As we have seen in the previous section we need to look at sections of
vector bundles but also need the additional data of a covariant
derivative. In the following we will use $\iota^\sharp$ to denote the
restriction of vector bundles on $M$ as well as their sections to the
submanifold $C$.
\begin{proposition}[Triples of invariant sections]
    \label{Def:Triples_Inv_Sections}%
    Let $\iota \colon C \to M$ be a submanifold and $D \subseteq TC$
    an integrable distribution. Let $E_\Total \to M$ be a vector
    bundle, $E_\Wobs \subseteq E_\Total$ and
    $E_\Null \subseteq \iota^\sharp E_\Wobs \to C$ vector subbundles,
    and $\nabla$ a $D$-connection on $\iota^\sharp E_\Wobs$.  Then for
    $\VBQuad{E}\coloneqq (E_\Total, E_\Wobs,E_\Null, \nabla)$ the
    $\Cinfty(M,C,D)$-module $\Secinfty(\VBQuad{E})$ defined by
    \begin{equation}
        \begin{split}
            \Secinfty(\VBQuad{E})_\Total
            &\coloneqq \Secinfty(E_\Total)
            \\
            \Secinfty(\VBQuad{E})_\Wobs
            &\coloneqq
            \lbrace
            s \in \Secinfty(E_\Wobs)
            \mid
            \nabla_X \iota^\sharp s = 0 \ \forall X \in \Secinfty(D)
            \rbrace
            \\
            \Secinfty(\VBQuad{E})_\Null
            &\coloneqq
            \lbrace
            s \in \Secinfty(E_\Wobs)
            \mid
            \iota^\sharp s \in \Secinfty(E_\Null)
            \textrm{ and }
            \nabla_X \iota^\sharp s = 0
            \ \forall X \in \Secinfty(D)
            \rbrace
        \end{split}
    \end{equation}
    together with $\iota_{\Secinfty(\VBQuad{E})}$ being the inclusion
    of sections $\Secinfty(E_\Wobs) \to \Secinfty(E_\Total)$ is called
    the \emph{triple of invariant sections} subject to $\VBQuad{E}$.
\end{proposition}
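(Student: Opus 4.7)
The plan is to verify the four conditions making $\Secinfty(\VBQuad{E})$ into a coisotropic right $\Cinfty(M,C,D)$-module according to \autoref{def:CoisoModules}: each component is a module over the appropriate algebra, the natural inclusion is compatible with $\iota_{\Cinfty(M,C,D)}$, $\Secinfty(\VBQuad{E})_\Null$ is a submodule of $\Secinfty(\VBQuad{E})_\Wobs$, and the ideal condition $\Secinfty(\VBQuad{E})_\Wobs \cdot \algebra{J}_C \subseteq \Secinfty(\VBQuad{E})_\Null$ holds.

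First I would treat the $\Total$-component: $\Secinfty(E_\Total)$ is a $\Cinfty(M)$-module by the classical Serre-Swan setup (cf.~\autoref{Theorem:Serre_Swan}), so nothing is to be done there. The nontrivial step is to show that $\Secinfty(\VBQuad{E})_\Wobs$ is stable under multiplication by $\Cinfty_D(M)$. For $s \in \Secinfty(\VBQuad{E})_\Wobs$ and $f \in \Cinfty_D(M)$ one has $fs \in \Secinfty(E_\Wobs)$ (since $E_\Wobs$ is a subbundle and $f$ is a scalar function), and for any $X \in \Secinfty(D)$ the Leibniz rule gives
\begin{equation*}
    \nabla_X \iota^\sharp(fs)
    =
    X(\iota^* f) \cdot \iota^\sharp s + \iota^* f \cdot \nabla_X \iota^\sharp s.
\end{equation*}
The second summand vanishes because $s \in \Secinfty(\VBQuad{E})_\Wobs$, and the first vanishes because $\iota^* f \in \Cinfty_D(C)$ is constant along the leaves of $D$, so $X(\iota^* f) = 0$. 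This is really the only place where the defining condition on $\Cinfty_D(M)$ enters, and I would highlight it as the key computation.

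Next I would check that $\Secinfty(\VBQuad{E})_\Null$ is a $\Cinfty_D(M)$-submodule of $\Secinfty(\VBQuad{E})_\Wobs$. The inclusion on underlying sets is immediate from the definitions, and stability under multiplication by $f \in \Cinfty_D(M)$ follows from the previous paragraph together with the fact that $\iota^\sharp(fs) = \iota^* f \cdot \iota^\sharp s$ lies in $\Secinfty(E_\Null)$ whenever $\iota^\sharp s$ does, since $E_\Null$ is a subbundle and hence its sections form a module over $\Cinfty(C)$. The inclusion $\iota_{\Secinfty(\VBQuad{E})}\colon \Secinfty(E_\Wobs) \hookrightarrow \Secinfty(E_\Total)$ is obviously a module morphism along $\iota_{\Cinfty(M,C,D)}$ because the latter is just the inclusion $\Cinfty_D(M)\hookrightarrow\Cinfty(M)$.

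Finally the ideal compatibility \eqref{eq:ModuleTripleCondition}: for $s \in \Secinfty(\VBQuad{E})_\Wobs$ and $f \in \algebra{J}_C$, the product $fs$ lies in $\Secinfty(\VBQuad{E})_\Wobs$ by the previous step (noting $\algebra{J}_C \subseteq \Cinfty_D(M)$), and $\iota^\sharp(fs) = \iota^* f \cdot \iota^\sharp s = 0 \in \Secinfty(E_\Null)$, so $fs \in \Secinfty(\VBQuad{E})_\Null$. None of this is deep; the essential point is the Leibniz-rule computation for stability of $\Wobs$-sections, and all the remaining verifications are routine bookkeeping with subbundles and pullbacks. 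I do not expect a real obstacle.
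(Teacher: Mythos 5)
Your verification is correct and is exactly the routine check the paper leaves implicit (it states the proposition without proof): the only substantive point is the Leibniz-rule computation showing $\nabla_X\iota^\sharp(fs)=X(\iota^*f)\,\iota^\sharp s+\iota^*f\,\nabla_X\iota^\sharp s=0$ for $f\in\Cinfty_D(M)$, which you identify and carry out properly, and the remaining module, compatibility and ideal conditions follow as you describe.
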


\begin{remark}
    Triples of the form as in \autoref{Def:Triples_Inv_Sections} are
    not the only possible way to define $\Cinfty(M,C,D)$-modules using
    sections of vector bundles.  For example, it would suffice to
    specify $E_\Wobs$ on the submanifold $C$ and $\nabla$ for sections
    of $E_\Wobs / E_\Null$: More precisely, given a vector bundle
    $E_\Total \to M$ and subbundles
    $E_\Null \subseteq E_\Wobs \subseteq \iota^\sharp E_\Total \to C$
    we could consider the coisotropic module $\module{E}$ defined by
    \begin{equation}
        \begin{split}
            \module{E}_\Total
            &\coloneqq
            \Secinfty(E_\Total)
            \\
            \module{E}_\Wobs
            &\coloneqq
            \lbrace
            s \in \Secinfty(E_\Total)
            \mid
            \iota^\sharp s \in \Secinfty(E_\Wobs)
            \textrm{ and }
            \nabla_X [\iota^\sharp s] = 0
            \ \forall X \in \Secinfty(D)
            \rbrace
            \\
            \module{E}_\Null
            &\coloneqq
            \lbrace
            s \in \Secinfty(E_\Total)
            \mid
            \iota^\sharp s \in \Secinfty(E_\Null)
            \rbrace.
        \end{split}
    \end{equation}
    However, triples of this type will not fit into the description of
    regular projective $\Cinfty(M,C,D)$-modules obtained in
    \autoref{Thm:Coisotropic_Serre_Swan}.  Roughly speaking, the
    reason for this is the constancy of the rank of $e_\Wobs$ for
    an idempotent $e^2 = e$ on a free coisotropic
    $\Cinfty(M,C,D)$-module with $\image e = \module{E}$.
\end{remark}

Under certain additional assumptions we can prove that triples of the
form as in \autoref{Def:Triples_Inv_Sections} are regular projective:
\begin{proposition}
    \label{Prop:Set3Inj_projective_sections}%
    Let $\iota \colon C \to M$ be a closed submanifold and let
    $D \subseteq TC$ be simple with projection
    $\pi \colon C \to C \slash D$. Moreover, let
    $\VBQuad{E} \coloneqq (E_\Total, E_\Wobs, E_\Null, \nabla)$ be as
    in \autoref{Def:Triples_Inv_Sections} with the additional
    assumptions that
    \begin{propositionlist}
    \item $E_\Null$ is closed under $\nabla$, i.e.
        $\nabla_X s \in \Secinfty(E_\Null)$ for all
        $X\in\Secinfty(D), s \in \Secinfty(E_\Null)$,
    \item $\nabla$ is holonomy-free.
    \end{propositionlist}
    Then $\Secinfty(\VBQuad{E})$ is a finitely generated regular
    projective coisotropic $\Cinfty(M,C,D)$-module.
\end{proposition}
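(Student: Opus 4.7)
The plan is to verify regular projectivity by exhibiting an explicit dual basis in the sense of \autoref{prop:DualBasis}, which by \autoref{thm:RegularProjective} implies that $\Secinfty(\VBQuad{E})$ is a finitely generated regular projective coisotropic module. The dual basis has to be indexed by a coisotropic index set $M \in \SetTripleInj$ with $M_\Null \subseteq M_\Wobs \subseteq M_\Total$, and the five compatibility conditions of \autoref{prop:DualBasis} have to be arranged simultaneously.

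I would construct the basis sections in three blocks matching the filtration $E_\Null \subseteq \iota^\sharp E_\Wobs \subseteq \iota^\sharp E_\Total$. Applying \autoref{Prop:Dual_Basis_Invariant_Sections} to $E_\Null \to C$ with the restricted $D$-connection (well-defined and holonomy-free by the two additional hypotheses) produces a finite parallel dual basis; applying it to the quotient bundle $(\iota^\sharp E_\Wobs)/E_\Null$ with the induced $D$-connection and lifting representatives through a splitting yields further $\nabla$-parallel sections that, together with the first block, generate $\iota^\sharp E_\Wobs$. Since $\iota\colon C \to M$ is a closed embedding, each such section on $C$ extends to a smooth section of $E_\Wobs$ on $M$ via a tubular neighborhood and a partition of unity; the parallelism condition is only needed on $C$, so the extensions automatically lie in $\Secinfty(\VBQuad{E})_\Wobs$. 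Finally, I would apply the classical \autoref{Theorem:Serre_Swan} to $E_\Total \to M$, choosing a complement $E_\Total\at{C} = E_\Wobs \oplus E'$ above $C$, to produce the remaining generators of $\Secinfty(E_\Total)$ indexed by $M_\Total \setminus \iota_M(M_\Wobs)$. The corresponding dual functionals are obtained from \autoref{Prop:Dual_Basis_Invariant_Sections} on the $\Wobs$-side, where the $D$-invariant functions on $C$ that arise are extended to elements of $\Cinfty_D(M)$ via the isomorphism $\pi^* \colon \Cinfty(C/D) \to \Cinfty_D(C)$ followed by an extension off $C$, and from classical Serre-Swan on the $\Total$-side.

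The main obstacle is making all five compatibilities hold at once. Conditions \ref{prop:DualBasis_1}, \ref{prop:DualBasis_2}, and \ref{prop:DualBasis_5} follow from the block structure of the construction, the latter because $\Wobs$-functionals indexed by $M_\Wobs \setminus M_\Null$ come from duals of the quotient bundle and therefore annihilate sections whose restrictions to $C$ land in $E_\Null$. The delicate conditions are \ref{prop:DualBasis_3} and \ref{prop:DualBasis_4}: the $\Total$- and $\Wobs$-duals must agree under the inclusion $\iota_{\Secinfty(\VBQuad{E})}$, and the $\Total$-duals indexed by $M_\Total \setminus \iota_M(M_\Wobs)$ must vanish on every section coming from $\Secinfty(\VBQuad{E})_\Wobs$. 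This forces the Serre-Swan dual basis for $E_\Total$ to be chosen so as to split along the decomposition $E_\Wobs \oplus E'$ above $C$, with the duals on the $E_\Wobs$-part extending the previously constructed $\Cinfty_D(M)$-valued duals. This is a geometric bookkeeping task made possible by the closedness of $C$, the simplicity of $D$, and the $\nabla$-stability of $E_\Null$. Once arranged, the reconstruction formulas of \autoref{prop:DualBasis_Eq1} hold fiberwise by construction in each block, and the index set $M$ has injective $\iota_M$ by its nested definition, so $M \in \SetTripleInj$ and the criterion of \autoref{prop:DualBasis} applies.
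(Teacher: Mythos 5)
Your overall route --- proving regular projectivity by exhibiting a dual basis in the sense of \autoref{prop:DualBasis} and invoking \autoref{thm:RegularProjective} --- is exactly the paper's, and your treatment of the geometry along $C$ is essentially the paper's first step. The genuine gap lies in the passage from $C$ to $M$. The reconstruction identities \eqref{prop:DualBasis_Eq1} must hold for \emph{all} elements of $\Secinfty(\VBQuad{E})_\Wobs$ and of $\Secinfty(E_\Total)$, which are global sections over $M$; the parallelism constraint only restricts their behavior along $C$, so $\Secinfty(\VBQuad{E})_\Wobs$ contains, for instance, every section of $E_\Wobs$ vanishing on $C$, with arbitrary values away from $C$. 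Your $\Wobs$-generators are only the finitely many parallel sections over $C$, extended via a tubular neighborhood and a cut-off, with functionals extended off $C$ arbitrarily: such a family cannot satisfy $x_\Wobs=\sum_m f_m f^m(x_\Wobs)$ at points outside the tubular neighborhood (with cut-off extensions the right-hand side even vanishes there), and the identity is unclear on $U\setminus C$ for an arbitrary extension of the functionals. The defect propagates to the $\Total$-component: by \ref{prop:DualBasis_1}, \ref{prop:DualBasis_3} and \ref{prop:DualBasis_4}, an $E_\Wobs$-valued section supported away from $C$ would have to be reconstructed from the $\Wobs$-block alone, which is impossible. Relatedly, splitting off a complement of $E_\Wobs$ only over $C$ is not enough: \ref{prop:DualBasis_4} demands that the extra $\Total$-functionals annihilate sections of $E_\Wobs$ over all of $M$, so the decomposition $E_\Total=E_\Wobs\oplus E_\Wobs^\perp$ and the projections must be chosen globally on $M$.

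What is missing is precisely the paper's third block and the gluing: choose a classical dual basis $(h_i,h^i)$ of $\Secinfty(E_\Wobs\at{M\setminus C})$, extend the data over $C$ to a tubular neighborhood $U$ through the pull-back isomorphism $\Theta$ of \autoref{Lemma:Vector_Bundle_Isomorphism_Theta} (this specific extension, not an arbitrary one, is what makes the reconstruction hold on all of $U$), and glue the two families with a quadratic partition of unity $\chi_1^2+\chi_2^2=1$ with $\supp\chi_1\subseteq U$ and $\supp\chi_2\subseteq M\setminus C$; closedness of $C$ enters here, and $\chi_1\at{C}=1$ keeps the glued sections in the $\Wobs$- resp.\ $\Null$-component. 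The index set then acquires an extra block $J\subseteq N_\Wobs\setminus N_\Null$, and conditions \ref{prop:DualBasis_3}--\ref{prop:DualBasis_5} are verified with the global projections $\Pi_\Wobs$ and $\Pi_\Wobs^\perp$. One smaller point: lifting parallel sections of the quotient $(\iota^\sharp E_\Wobs)\slash E_\Null$ through an arbitrary splitting does \emph{not} yield $\nabla$-parallel sections; you need the complementary subbundle to be closed under $\nabla$, which is exactly \autoref{Lemma:Complementary_Subbundle_Closed} (and this is where holonomy-freeness and simplicity of $D$ are used), so this hypothesis should be invoked rather than an unspecified splitting.
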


Before we prove projectivity, we will need to extend sections of
$\iota^\sharp E_\Wobs$ to sections on $E_\Wobs$.  This can be done by
locally writing a vector bundle $E$ over $M$ as the pull-back vector
bundle of the restriction $\iota^\sharp E$ to $C$:
\begin{lemma}
    \label{Lemma:Vector_Bundle_Isomorphism_Theta}%
    Let $\iota \colon C \to M$ be a submanifold and $E \to M$ a vector
    bundle.  Then on an open neighborhood $U \supseteq \iota(C)$ there
    exists a surjective submersion $t \colon U \to C$ with
    $t \circ \iota = \id_C$ and a vector bundle isomorphism
    \begin{equation}
	\Theta \colon E \at{U} \to t^\sharp (\iota^\sharp E)
    \end{equation}
    over the identity with
    $t^\sharp \circ \Theta \circ \iota^\sharp = \id_{\iota^\sharp E}$
    for the projections $\iota^\sharp \colon \iota^\sharp E \to E$ and
    $t^\sharp \colon t^\sharp(\iota^\sharp E) \to \iota^\sharp E$.
\end{lemma}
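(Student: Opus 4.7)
The plan is to combine the tubular neighborhood theorem with a parallel-transport construction. First, apply the tubular neighborhood theorem to the submanifold $\iota\colon C \to M$ to obtain an open neighborhood $U \supseteq \iota(C)$ together with a diffeomorphism between $U$ and an open neighborhood of the zero section in the normal bundle $NC \to C$. Pulling the bundle projection $NC \to C$ through this diffeomorphism yields a surjective submersion $t\colon U \to C$, and since the zero section is identified with $\iota(C)$, we have $t \circ \iota = \id_C$ as required.

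To produce $\Theta$, I would choose an arbitrary covariant derivative $\nabla^E$ on $E$. For each point $p \in U$, the tubular neighborhood identification gives a canonical smooth curve $\gamma_p\colon [0,1] \to U$ from $p$ to $t(p)$, namely the straight line in the fiber of $NC \to C$ joining the point representing $p$ to the origin of that fiber, depending smoothly on $p$. Define
\begin{equation*}
  \Theta\colon E\at{U} \to t^\sharp(\iota^\sharp E),
  \quad
  v_p \mapsto (p,\, \Parallel^{\nabla^E}_{\gamma_p, 0 \to 1}(v_p)),
\end{equation*}
which is well-defined because $\Parallel^{\nabla^E}_{\gamma_p, 0 \to 1}(v_p)$ lies in $E_{t(p)} = (\iota^\sharp E)_{t(p)}$. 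Fiberwise this is a linear isomorphism (parallel transport is always a linear isomorphism), hence $\Theta$ is a vector bundle isomorphism over $\id_U$.

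To verify $t^\sharp \circ \Theta \circ \iota^\sharp = \id_{\iota^\sharp E}$, observe that for any $c \in C$ the curve $\gamma_{\iota(c)}$ is constant at $\iota(c)$, since $t(\iota(c)) = c$ and the corresponding normal vector is zero. Therefore parallel transport along $\gamma_{\iota(c)}$ is the identity, so $\Theta$ restricted to fibers over $\iota(C)$ equals the canonical identification of $E_{\iota(c)}$ with $(\iota^\sharp E)_c$; composing with the natural projections $\iota^\sharp$ and $t^\sharp$ yields the identity on $\iota^\sharp E$.

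The main subtlety is making the tubular neighborhood and the induced retraction $t$ precise enough that the straight-line curves $\gamma_p$ really are well-defined on all of $U$ and depend smoothly on $p$; this amounts to choosing $U$ so that its image in $NC$ is fiberwise star-shaped about the zero section, which is always possible by shrinking $U$ if necessary. Smoothness of $\Theta$ then follows from smooth dependence of solutions of linear ODEs on parameters.
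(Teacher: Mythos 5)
Your proposal is correct and follows essentially the same route as the paper: a tubular neighborhood giving the retraction $t$, the fiberwise straight-line curves $\gamma_p$ from $p$ to $t(p)$, and parallel transport with respect to an arbitrary covariant derivative to define $\Theta$, with the constancy of $\gamma_{\iota(c)}$ giving the compatibility $t^\sharp \circ \Theta \circ \iota^\sharp = \id_{\iota^\sharp E}$. The only cosmetic difference is that the paper takes the tubular neighborhood as a diffeomorphism from the whole total space of a vector bundle over $C$ onto $U$, which makes the star-shapedness issue you flag disappear automatically.
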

\begin{proof}
    We choose a vector bundle $\pr_F \colon F \to C$, an open
    neighborhood $U \subseteq M$ of $\iota(C) \subseteq M$ and a
    diffeomorphism $\tau \colon F \to U$ with
    $\tau \circ \iota_F = \iota$ for the zero section
    $\iota_F \colon C \to F$. This can be done using a tubular
    neighborhood.  We can define
    $t \coloneqq \pr_F \circ \tau^{-1}$.  Now we choose an arbitrary
    covariant derivative $\nabla$ on $E\at{U} \to U$.  For $p \in U$
    we define
    \begin{equation*}
	\gamma_p \colon I \to U,
        \quad
        \gamma_p(s)
        \coloneqq
	\tau\bigl((1-s) \tau^{-1}(p)\bigr) .
    \end{equation*}
    We then have $\gamma_p(0) = p$ and $\gamma_p(1) = t(p)$.  We
    obtain a family of smooth curves which depends smoothly on
    $p \in U$. Setting
    \begin{equation*}
	\Theta(v_p)
        \coloneqq
        \bigl(p, \Parallel^\nabla_{\gamma_p, 0 \to 1}(v_p) \bigr),
    \end{equation*}
    we obtain a vector bundle morphism
    $\Theta \colon E \at{U} \to t^\sharp (\iota^\sharp E)$.  Its
    inverse is given by
    \begin{equation*}
	(p,v_{t(p)}) \mapsto \Parallel^\nabla_{\gamma_p, 1 \to 0}(v_{t(p)}) .
    \end{equation*}
\end{proof}

Moreover, we will need a decomposition of $\iota^\sharp E_\Wobs$ into
$E_\Null$ and a complementary subbundle $E_\Null^\perp$ which is also
closed under $\nabla$:
\begin{lemma}
    \label{Lemma:Complementary_Subbundle_Closed}%
    Let $C$ be a smooth manifold, $D \subseteq TC$ simple with
    projection $\pi \colon C \to C \slash D$ and $\nabla$ a
    holonomy-free $D$-connection on a vector bundle $E \to C$.  If
    $F \subseteq E$ is a vector subbundle such that
    \begin{equation}
	\nabla_X s \in \Secinfty(F)
    \end{equation}
    for all $s \in \Secinfty(F), X \in \Secinfty(D)$, then there
    exists a complementary vector subbundle $F^\perp$ such that
    \begin{equation}
	\nabla_X r \in \Secinfty(F^\perp)
    \end{equation}
    for all $r \in \Secinfty(F^\perp), X \in \Secinfty(D)$.
\end{lemma}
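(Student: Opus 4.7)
The plan is to leverage the quotient vector bundle construction from \autoref{Prop:Quotient_Vector_Bundles}: since $\nabla$ is holonomy-free and $D$ is simple, we have at our disposal the quotient vector bundle $E \slash \mathord{\sim}_\nabla \to C \slash D$ together with the isomorphism $\Xi \colon E \to \pi^\sharp(E \slash \mathord{\sim}_\nabla)$ of \eqref{Eq:Isomorphism_Quotient_VB_Pullback}. The strategy is to descend $F$ to a subbundle of $E \slash \mathord{\sim}_\nabla$ on the smooth base $C \slash D$, choose a complement there by classical means, and pull it back.

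First I would show that the closedness of $F$ under $\nabla$ implies that the parallel transport $\Parallel_{\gamma, a \to b}$ along any curve $\gamma$ inside a leaf of $D$ maps $F_{\gamma(a)}$ to $F_{\gamma(b)}$. This follows from \autoref{Lemma:Parallel_Sections_Parallel_Transport} applied to $F$ regarded as a vector bundle with the restricted $D$-connection: any $v \in F_{\gamma(a)}$ extends locally to a section of $F$ and then to a $\nabla$-parallel section of $F$ (via the standard ODE argument for the parallel transport equation), and such a parallel section stays in $F$ along $\gamma$. Consequently $\sim_\nabla$ restricts to an equivalence relation on $F$, the transport map $\Transport^\nabla$ maps $F \times_\pi C$ into $F$, and by the same argument as in \autoref{Prop:Quotient_Vector_Bundles} the quotient $F \slash \mathord{\sim}_\nabla$ is a smooth vector subbundle of $E \slash \mathord{\sim}_\nabla$ over $C \slash D$, with $\Xi(F) = \pi^\sharp(F \slash \mathord{\sim}_\nabla)$.

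Second, on the smooth manifold $C \slash D$ I choose a fiber metric on the vector bundle $E \slash \mathord{\sim}_\nabla$ using a partition of unity, and let $G \subseteq E \slash \mathord{\sim}_\nabla$ be the fiberwise orthogonal complement of $F \slash \mathord{\sim}_\nabla$. Then $G$ is a smooth vector subbundle with $E \slash \mathord{\sim}_\nabla = (F \slash \mathord{\sim}_\nabla) \oplus G$. I define
\begin{equation*}
    F^\perp \coloneqq \Xi^{-1}\bigl(\pi^\sharp G\bigr) \subseteq E,
\end{equation*}
which is a smooth vector subbundle of $E$, and fiberwise $E_p = F_p \oplus F^\perp_p$ since $\Xi$ is an isomorphism over $\id_C$ and the analogous decomposition holds in $E \slash \mathord{\sim}_\nabla$ at $\pi(p)$.

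Finally, I verify that $F^\perp$ is closed under $\nabla$. By \autoref{Lemma:Isomorphism_Invariant_Sections_Quotient_Sections} applied to $G$, any section of $G$ over $C \slash D$ pulls back to a $\nabla$-parallel section of $F^\perp$; in particular, $F^\perp$ admits local frames $s_1, \ldots, s_k$ by $\nabla$-parallel sections, i.e.\ $\nabla_Y s_i = 0$ for all $Y \in \Secinfty(D)$. Given $r \in \Secinfty(F^\perp)$ and $X \in \Secinfty(D)$, writing $r = \sum_i f_i s_i$ locally with $f_i \in \Cinfty(C)$ and applying the Leibniz rule yields
\begin{equation*}
    \nabla_X r
    =
    \sum_i X(f_i) \, s_i + \sum_i f_i \nabla_X s_i
    =
    \sum_i X(f_i) \, s_i
    \in \Secinfty(F^\perp),
\end{equation*}
which is the claimed closedness. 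The main step likely to require care is the first one, namely making precise that fiberwise closedness of $F$ under $\nabla$ transfers cleanly to the invariance of $F$ under $\Parallel_{\gamma, a \to b}$; everything afterwards reduces to classical vector bundle constructions on $C \slash D$ and the pullback formalism of \autoref{Prop:Quotient_Vector_Bundles}.
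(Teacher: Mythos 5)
Your proposal is correct and follows essentially the same route as the paper: you descend $F$ to a subbundle $F \slash \mathord{\sim}_\nabla$ of $E \slash \mathord{\sim}_\nabla$ over $C \slash D$ using invariance under leafwise parallel transport, choose a complement there, and pull it back through the isomorphism $\Xi$ of \autoref{Prop:Quotient_Vector_Bundles}. The only difference is that you make explicit what the paper leaves implicit (the fiber metric producing the complement, and the verification of closedness via $\nabla$-parallel local frames and the Leibniz rule), which is a fair expansion of the paper's remark that the isomorphism is compatible with the covariant derivatives.
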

\begin{proof}
    By assumption the restriction $\nabla^F$ of $\nabla$ to $F$
    defines a $D$-connection on $F$.  Its parallel transport coincides
    with the parallel transport of elements of $F$ with respect to
    $\nabla$. In particular, the parallel transport with respect to
    $\nabla^F$ is path-independent along the leaves of $D$.
    Therefore, we can take the vector bundle quotient of $F$ with
    respect to $\nabla^F$ and obtain a vector subbundle
    $F \slash \mathord{\sim}_{\nabla^F} \subseteq E \slash
    \mathord{\sim}_\nabla$: Indeed, a suitable injective vector bundle
    morphism over the identity is given by
    \begin{equation*}
	F \slash \mathord{\sim}_{\nabla^F} \ni [v_p]
        \mapsto
        [v_p] \in E \slash \mathord{\sim}_\nabla .
    \end{equation*}
    The fact that we have a vector subbundle
    $F \slash \mathord{\sim}_{\nabla^F} \subseteq E \slash
    \mathord{\sim}_\nabla$ implies that there exists a complementary
    subbundle $Q \subseteq E \slash \mathord{\sim}_\nabla$, i.e.
    \begin{equation*}
	\bigl(F \slash \mathord{\sim}_{\nabla^F}\bigr) \oplus Q
        =
        E \slash \mathord{\sim}_\nabla .
    \end{equation*}
    Pulling $Q$ back gives a subbundle
    $\pi^\sharp Q \subseteq \pi^\sharp \bigl(E \slash
    \mathord{\sim}_\nabla\bigr)$.  Using the isomorphism
    \begin{equation*}
	\Phi \colon
        E \to \pi^\sharp \bigl(E \slash \mathord{\sim}_\nabla\bigr),
        \quad
	v_p \mapsto (p, [v_p])
    \end{equation*}
    we obtain a subbundle
    $F^\perp \coloneqq\Phi^{-1}\bigl(\pi^\sharp Q\bigr) \subseteq E$.
    This is clearly complementary to $F$ and also closed under
    $\nabla$ as $\Phi$ is compatible with covariant derivatives.
\end{proof}

A decomposition like that now allows us to construct a dual basis to
show regular projectivity of the triple of invariant sections:
\begin{proof}[Of \autoref{Prop:Set3Inj_projective_sections}]
    We choose a finite dual basis $(f_i,f^i)_{i \in I_\Null}$ of
    $\Secinfty_D(E_\Null)$ as in
    \autoref{Prop:Dual_Basis_Invariant_Sections} and a finite dual
    basis $(g_i,g^i)_{i \in I_\Null^\perp}$ of
    $\Secinfty_D(E_\Null^\perp)$.  Let $\Pi_\Null$
    (resp. $\Pi_\Null^\perp$) be the projections
    $\iota^\sharp E_\Wobs \to E_\Null$ (resp.
    $\iota^\sharp E_\Wobs \to E_\Null^\perp$).  We now include the
    $f_i,g_i$ into $\Secinfty(\iota^\sharp E_\Wobs)$ and pre-compose
    the $f^i,g^i$ with $\Pi_\Null$ (resp. $\Pi_\Null^\perp$), i.e.  we
    have $f^i \in \Secinfty((\iota^\sharp E_\Wobs)^*)$ with
    $f^i(v_q) = f^i(\Pi_\Null v_q)$ and
    $g^i(v_q) = g^i(\Pi_\Null^\perp v_p)$ for
    $v_q \in \iota^\sharp E_\Wobs$.  This way, we obtain a dual basis
    of $\Secinfty_D(\iota^\sharp E_\Wobs)$: Indeed, the projections
    fulfill
    $\nabla_X (\Pi_\Null \circ s) = \Pi_\Null \circ \nabla_X s$ and
    $\nabla_X (\Pi_\Null^\perp \circ s) = \Pi_\Null^\perp \circ
    \nabla_X s$ because
    \begin{equation*}
	\Pi_\Null \circ \nabla_X s
	=
        \Pi_\Null \circ \bigl( \nabla_X (\Pi_\Null \circ s)
	+
        \nabla_X (\Pi_\Null^\perp \circ s) \bigr)
	=
        \nabla_X( \Pi_\Null \circ s)
    \end{equation*}
    for $s \in \Secinfty(\iota^\sharp E_\Wobs)$.  Therefore, for
    $s \in \Secinfty_D(\iota^\sharp E_\Wobs)$ we have
    $\nabla_X \bigl(\Pi_\Null \circ s\bigr) = \Pi_\Null \circ \nabla_X
    s = 0$.  So the precomposed $f^i,g^i$ (which we still denote by
    the same symbol) map parallel sections in
    $\Secinfty_D(\iota^\sharp E_\Wobs)$ to functions in
    $\Cinfty_D(C)$.  Moreover, for $v_p \in \iota^\sharp E_\Wobs$ we
    have
    \begin{equation*}
	\sum_{i \in I_\Null} f_i(p)f^i(v_p)
        +
        \sum_{i \in I_\Null^\perp} g_i(p)g^i(v_p)
	=
        \sum_{i \in I_\Null} f_i(p)f^i(\Pi_\Null v_p)
	+
        \sum_{i \in I_\Null^\perp} g_i(p)g^i(\Pi_\Null^\perp v_p)
	=
        \Pi_\Null v_p + \Pi_\Null^\perp v_p
	=
        v_p .
    \end{equation*}
    Now we extend the $f_i,f^i,g_i,g^i$ to a dual basis of
    $\lbrace s \in \Secinfty(E_\Wobs\at{U}) \mid \nabla_X \iota^\sharp
    s = 0 \ \forall X \in \Secinfty(D) \rbrace$ for an open
    neighborhood $U \subseteq M$ of $C$: To do so, let
    $\Theta \colon E_\Wobs \at{U} \to t^\sharp (\iota^\sharp E_\Wobs)$
    be a vector bundle isomorphism as in
    \autoref{Lemma:Vector_Bundle_Isomorphism_Theta}.  We define
    \begin{equation*}
	\tilde{f}_i(p) \coloneqq \Theta^{-1}(p, f_i(t(p)))
    \end{equation*}
    for $p \in U$ which leads to $\iota^\sharp \tilde{f}_i = f_i$ and
    \begin{equation*}
	\tilde{f}^i_p(v_p) \coloneqq f^i_{t(p)}(t^\sharp(\Theta(v_p)))
    \end{equation*}
    for $v_p \in E_\Wobs\at{U}$ which leads to
    $\iota^\sharp \tilde{f}^i = f^i$.  We then have
    \begin{align*}
        \sum_{i \in I_\Null} \tilde{f}_i(p)
        & \tilde{f}^i(v_p)
        + \sum_{i \in I_\Null^\perp} \tilde{g}_i(p)\tilde{g}^i(v_p)
        \\
        &=
        \sum_{i \in I_\Null}
        \Theta^{-1}(p, f_i(t(p))) f^i_{t(p)}(t^\sharp(\Theta(v_p)))
        +
        \sum_{i \in I_\Null^\perp}
        \Theta^{-1}(p, g_i(t(p))) g^i_{t(p)}(t^\sharp(\Theta(v_p)))
        \\
        &=
        \Theta^{-1}\Bigl(p,
	\sum_{i \in I_\Null}
        f_i(t(p)) f^i_{t(p)}(t^\sharp(\Theta(v_p)))
	\sum_{i \in I_\Null^\perp}
        g_i(t(p)) g^i_{t(p)}(t^\sharp(\Theta(v_p)))
	\Bigr)
        \\
        &=
        \Theta^{-1}\Bigl(p, t^\sharp(\Theta(v_p)) \Bigr)
        \\
        &= v_p
    \end{align*}
    for $v_p \in E\at{U}$. Again, we denote the extensions by
    $f_i,f^i,g_i,g^i$.  Next, we choose a dual basis
    $(h_i,h^i)_{i \in J}$ of $\Secinfty(E_\Wobs\at{M \setminus C})$.
    Such a dual basis exists by \autoref{Theorem:Serre_Swan}.  Now we
    choose a quadratic partition of unity
    $\chi_1, \chi_2 \in \Cinfty(M, [0,1])$ with
    $\chi_1^2 + \chi_2^2 = 1$ and $\supp \chi_1 \subseteq U$ and
    $\supp \chi_2 \subseteq M \setminus C$. Existence is guaranteed by
    the closedness of $C$. Multiplying the $f_i,f^i,g_i,g^i$ with
    $\chi_1$ and the $h_i,h^i$ with $\chi_2$ yields sections
    $\tilde{f}_i,\tilde{f}^i,\tilde{g}_i,\tilde{g}^i,\tilde{h}_i,\tilde{h}^i$
    defined on the whole of $M$. Moreover, the
    $\tilde{h}_i, \tilde{h}^i$ vanish on $C$ and the
    $\tilde{f}_i,\tilde{f}^i,\tilde{g}_i,\tilde{g}^i$ vanish on
    $M \setminus U$. This also shows that the
    $\tilde{h}_i, \tilde{h}^i$ coincide with $h_i,h^i$ on
    $M \setminus U$ and that the
    $\tilde{f}_i,\tilde{f}^i,\tilde{g}_i,\tilde{g}^i$ coincide with
    the $f_i,f^i,g_i,g^i$ on $C$.  All those
    $\tilde{f}_i,\tilde{f}^i,\tilde{g}_i,\tilde{g}^i,\tilde{h}_i,\tilde{h}^i$
    together yield a dual basis of
    $\lbrace s \in \Secinfty(E_\Wobs) \mid \nabla_X \iota^\sharp s = 0
    \ \forall X \in \Secinfty(D) \rbrace$ as for $v_p \in E_\Wobs$
    with $p \in M \setminus U$ we have
    \begin{equation*}
        v_p
        =
        \sum_{i \in J} h_i(p) h^i(v_p)
        =
        \sum_{i \in J} \tilde{h}_i(p) \tilde{h}^i(v_p)
        =
        \sum_{i \in I_\Null} \tilde{f}_i(p) \tilde{f}^i(v_p)
        +
        \sum_{i \in I_\Null^\perp} \tilde{g}_i(p)\tilde{g}^i(v_p)
        +
        \sum_{i \in J} \tilde{h}_i(p) \tilde{h}^i(v_p),
    \end{equation*}
    if $p \in C$ we have
    \begin{equation*}
        v_p
        =
        \sum_{i \in I_\Null} f_i(p)f^i(v_p)
        +
        \sum_{i \in I_\Null^\perp} g_i(p) g^i(v_p)
        =
        \sum_{i \in I_\Null} \tilde{f}_i(p) \tilde{f}^i(v_p)
        +
        \sum_{i \in I_\Null^\perp} \tilde{g}_i(p)\tilde{g}^i(v_p)
        +
        \sum_{i \in J} \tilde{h}_i(p) \tilde{h}^i(v_p),
    \end{equation*}
    and if $p \in U \setminus C$ we have
    \begin{align*}
	v_p
        &=
        \chi_1^2(p) v_p + \chi_2^2(p) v_p
        \\
	&=
        \chi_1^2(p) \biggl(\sum_{i \in I_\Null} f_i(p)f^i(v_p)
	+ \sum_{i \in I_\Null^\perp} g_i(p) g^i(v_p) \biggr)
	+ \chi_2^2(p) \sum_{i \in J} h_i(p) h^i(v_p)
        \\
        &=
        \sum_{i \in I_\Null} \tilde{f}_i(p) \tilde{f}^i(v_p)
	+ \sum_{i \in I_\Null^\perp} \tilde{g}_i(p)\tilde{g}^i(v_p)
	+ \sum_{i \in J} \tilde{h}_i(p) \tilde{h}^i(v_p).
    \end{align*}
    For simplicity, we again denote the resulting
    $\tilde{f}_i,\tilde{f}^i,\tilde{g}_i,
    \tilde{g}^i,\tilde{h}_i,\tilde{h}^i$ by $f_i,f^i,g_i,g^i,h_i,h^i$.

    It remains to extend it to a dual basis of $\Secinfty(E_\Total)$.
    For this, we choose a subbundle $E_\Wobs^\perp$ complementary to
    $E_\Wobs$, i.e. $E_\Total = E_\Wobs \oplus E_\Wobs^\perp$.  We
    denote the projection onto $E_\Wobs$ by $\Pi_\Wobs$ and the
    projection onto $E_\Wobs^\perp$ by $\Pi_\Wobs^\perp$.  We choose a
    dual basis $(e_k,e^k)_{k \in K}$ of $\Secinfty(E_\Wobs^\perp)$.
    We can embed the $e_k$ into $\Secinfty(E_\Total)$ and turn the
    elements $e^k$ into elements of $\Secinfty(E_\Total^*)$ by
    precomposing with $\Pi_\Wobs^\perp$.  For
    $i \in I_\Null \sqcup I_\Null^\perp \sqcup J$ and
    $d \in \lbrace f,g,h \rbrace$ we set
    \begin{equation*}
        e_i \coloneqq d_i
        \quad
        \textrm{and}
        \quad
        e^i \coloneqq d^i \circ \Pi_\Wobs .
    \end{equation*}
    This gives us elements $e_i \in \Secinfty(E_\Total)$ and
    $e^i \in \Secinfty(E_\Total^*)$.

    Now we define the generator $N \in \SetTripleInj$ by
    $N_\Total \coloneqq I_\Null \sqcup I_\Null^\perp \sqcup J \sqcup
    K$, $N_\Wobs \coloneqq I_\Null \sqcup I_\Null^\perp \sqcup J$ and
    $N_\Null \coloneqq I_\Null$. As
    $\iota_N \colon N_\Wobs \to N_\Total$ we take the obvious
    inclusion.  So, our candidate for a $\SetTripleInj$-dual basis of
    $\Secinfty(\VBQuad{E})$ consists of the $f,g,h$ in the
    $\Wobs$-component, in the $\Null$-component we have the $f_i,f^i$
    and in the $\Total$-component we have the $e_i,e^i$ and the
    $e_k,e^k$. In the following, we check that this indeed fulfills
    the required properties: First of all, for $v_p \in E_\Wobs$ we
    have already seen that
    \begin{equation*}
	\sum_{i \in I_\Null \sqcup I_\Null^\perp \sqcup J}
	\bigl(d_i(p) d^i(v_p) \bigr) = v_p .
    \end{equation*}
    Moreover, on $C$ we have $\nabla_X d_i=0$ and $\nabla_X d^i=0$.
    For $i \in I_\Null$ we have
    $\iota^\sharp f_i \in \Secinfty_D(E_\Null)$ by definition of
    $f_i$.  This shows that for $i \in I_\Null$ we have
    $d_i \in \Secinfty(\VBQuad{E})_\Null$.

    For $i \in J$ the $h_i,h^i$ vanish on $C$ and therefore the
    condition $h^i(s) \in \algebra{J}_C$ for
    $s \in \Secinfty(\VBQuad{E})_\Null$ is trivially fulfilled.  For
    $i \in I_\Null^\perp$ it is fulfilled for the $g^i$ because
    $g^i(s)\at{C} = g^i\bigl(\Pi_\Null^\perp \circ s\bigr)\at{C} =
    g^i(0)\at{C} = 0$ for $s \in \Secinfty(E_\Null)$.  This shows that
    for $i \in I_\Null^\perp \sqcup J$ we have
    $d^i(s) \in \algebra{J}_C$ for
    $s \in \Secinfty(\VBQuad{E})_\Null$.

    The $e_i,e^i$ for $i \in N_\Total$ are a dual basis of
    $\Secinfty(E_\Total)$ because
    \begin{align*}
	\sum_{i \in I_\Null \sqcup I_\Null^\perp \sqcup J} e_i(p) e^i(v_p)
	+ \sum_{i \in K} e_i(p) e^i(v_p)
	&= \sum_{i \in I_\Null \sqcup I_\Null^\perp \sqcup J} d_i(p)
	d^i\bigl( \Pi_\Wobs(v_p)\bigr)
	+ \Pi_\Wobs^\perp(v_p)\\
	&= \Pi_\Wobs(v_p) + \Pi_\Wobs^\perp(v_p)\\
	&= v_p
    \end{align*}
    for all $v_p \in E_\Total$.  For $i \in N_\Wobs$ we have
    $e_{\iota_N(i)} = \iota_{\Secinfty(\VBQuad{E})}(d_i)$.  For
    $i \in N_\Total \setminus \iota_N(N_\Wobs)$, so $i \in K$, we have
    $e^i \circ \iota_{\Secinfty(\VBQuad{E})} = 0$ as
    $\Pi_\Wobs^\perp \at{E_\Wobs} = 0$.  For
    $i \in I_\Null \sqcup I_\Null^\perp \sqcup J$ we have
    $e^{\iota_N(i)} \circ \iota_{\Secinfty(\VBQuad{E})} = d^i$ as
    $\Pi_\Wobs \at{E_\Wobs} = \id_{E_\Wobs}$.

    This shows that we have finally found a $\SetTripleInj$-dual basis
    of $\Secinfty(\VBQuad{E})$. Therefore, $\Secinfty(\VBQuad{E})$ is
    a projective coisotropic $\Cinfty(M,C,D)$-module with finite
    generator $N \in \SetTripleInj$.
\end{proof}

Up to now, triples of invariant sections over triples of vector
bundles as in \autoref{Prop:Set3Inj_projective_sections} are merely an
example of regular projective coisotropic $\Cinfty(M,C,D)$-modules.
In the following, we want to see that essentially all regular
projective coisotropic $\Cinfty(M,C,D)$-modules are of this form. As a
first step, we formulate the following theorem, which does \emph{not}
need to assume any regularity of the integrable distribution $D$:
\begin{theorem}
    \label{Prop:Set3Inj_Proj_Modules_As_Sections}%
    Let $\module{P}$ be a finitely generated regular projective
    coisotropic module over the coisotropic algebra $\Cinfty(M,C,D)$.
    Assume that $M$ is connected and $C \slash D$ is connected with
    respect to the final topology induced by the projection
    $C \to C \slash D$.  Then there exists a vector bundle
    $E_\Total \to M$, vector subbundles $E_\Wobs \subseteq E_\Total$
    and $E_\Null \subseteq \iota^\sharp E_\Wobs$ and a holonomy-free
    $D$-connection $\nabla$ on $\iota^\sharp E_\Wobs$ such that
    $E_\Null$ is closed under $\nabla$ and the coisotropic module
    $\module{P}$ is isomorphic to
    $\Secinfty(E_\Total, E_\Wobs, E_\Null, \nabla)$.

    Moreover, we can choose $(E_\Total, E_\Wobs, E_\Null, \nabla)$ in
    such a way that there exists a decomposition
    $\iota^\sharp E_\Wobs = E_\Null \oplus E_\Null^\perp$ and
    $\nabla = \nabla^\Null \oplus \nabla^\perp$ where $\nabla^\Null$
    and $\nabla^\perp$ are holonomy-free $D$-connections on the
    subbundles $E_\Null$ and $E_\Null^\perp$, respectively.
\end{theorem}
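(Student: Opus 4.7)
The plan is to apply \autoref{thm:RegularProjective}, \ref{thm:RegularProjective_4} to realise $\module{P}$ as the image $e\algebra{A}^{(M)}$ of an idempotent $e = (e_\Total, e_\Wobs)$ on a finitely generated $\SetTripleInj$-free coisotropic module over $\algebra{A} = \Cinfty(M,C,D)$, and then to read off the geometric data directly from $e$.  The component $e_\Total$ is a smooth idempotent matrix with entries in $\Cinfty(M)$, so the classical Serre-Swan Theorem provides $E_\Total \to M$.  The compatibility $\iota_\algebra{A} \circ e_\Wobs = e_\Total \circ \iota_{\algebra{A}^{(M)}}$ forces $e_\Total$ to map $\field{R}^{\iota_M(M_\Wobs)} \subseteq \field{R}^{M_\Total}$ into itself with entries in $\Cinfty_D(M)$ coinciding with $e_\Wobs$, producing a subbundle $E_\Wobs \subseteq E_\Total$.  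Similarly, the condition $e_\Wobs(\algebra{A}_\Null^{(M_\Wobs)} + \algebra{A}_\Wobs^{(M_\Null)}) \subseteq \algebra{A}_\Null^{(M_\Wobs)} + \algebra{A}_\Wobs^{(M_\Null)}$ restricts on $C$ to a preservation of $\field{R}^{M_\Null} \subseteq \field{R}^{M_\Wobs}$ by $e_\Wobs|_C$, and $E_\Null \subseteq \iota^\sharp E_\Wobs$ is defined as the image of the resulting idempotent on $\Cinfty_D(C)^{M_\Null}$.  Rank constancy of $E_\Total$ and $E_\Wobs$ follows from connectedness of $M$, whereas the ranks of $\iota^\sharp E_\Wobs$ and $E_\Null$ lie in $\Cinfty_D(C)$ and are integer-valued, so they descend to continuous integer-valued functions on the connected space $C/D$ and are therefore constant.

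The $D$-connection $\nabla$ is obtained by restricting the trivial componentwise derivative $d$ on $C \times \field{R}^{M_\Wobs}$ to the subbundle $\iota^\sharp E_\Wobs$.  The key identity $d_X(e_\Wobs|_C \cdot s) = e_\Wobs|_C (d_X s)$ for $X \in \Secinfty(D)$, which holds because the $\Cinfty_D(C)$-valued entries of $e_\Wobs|_C$ make it $d$-parallel along $D$, simultaneously shows that $d$ preserves $\iota^\sharp E_\Wobs$ along $D$ (so $\nabla$ is well-defined) and that the formula $e_\Wobs|_C \circ d$ provides a covariant derivative on $\iota^\sharp E_\Wobs$ extending $\nabla$ to all tangent directions.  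Flatness of $d$ then makes $\nabla$ holonomy-free, and $E_\Null$ is $\nabla$-closed since it lies inside the $d$-preserved subbundle $C \times \field{R}^{M_\Null}$.  The isomorphism $\module{P} \simeq \Secinfty(\VBQuad{E})$ reduces to a routine check on each component: in the $\Wobs$-component one identifies $s \in \module{P}_\Wobs$ with sections of $E_\Wobs$ whose components restricted to $C$ lie in $\Cinfty_D(C)$; in the $\Null$-component one extends $s|_C \in \Secinfty(E_\Null)$ to some $x_2 \in \Cinfty_D(M)^{M_\Null}$ and writes $s = e_\Wobs(x_1 + x_2)$ with $x_1 \coloneqq s - e_\Wobs(x_2) \in \algebra{J}_C^{M_\Wobs}$.

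For the moreover part I plan to produce a $\nabla$-parallel idempotent section $\pi \in \Secinfty(\End(\iota^\sharp E_\Wobs))$ with image $E_\Null$ and then set $E_\Null^\perp \coloneqq \ker \pi$; this automatically yields the splitting $\iota^\sharp E_\Wobs = E_\Null \oplus E_\Null^\perp$ together with $\nabla = \nabla^\Null \oplus \nabla^\perp$ with both summands holonomy-free.  The block-upper-triangular form of $e_\Wobs|_C$ in the decomposition $\field{R}^{M_\Wobs} = \field{R}^{M_\Null} \oplus \field{R}^{M_\Wobs \setminus M_\Null}$ passes to parallel sections and produces the short exact sequence of $\Cinfty_D(C)$-modules
\begin{equation*}
    0 \to \Secinfty_D(E_\Null, \nabla) \to \Secinfty_D(\iota^\sharp E_\Wobs, \nabla) \to Q \to 0,
\end{equation*}
in which $Q$ is isomorphic to the image of the idempotent induced on $\Cinfty_D(C)^{M_\Wobs \setminus M_\Null}$ by the $(M_\Wobs \setminus M_\Null)$-diagonal block of $e_\Wobs|_C$; hence $Q$ is projective, the sequence splits, and one obtains a $\Cinfty_D(C)$-linear idempotent $\Pi$ onto $\Secinfty_D(E_\Null, \nabla)$.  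Extending scalars along $\Cinfty_D(C) \hookrightarrow \Cinfty(C)$ turns $\Pi$ into a $\Cinfty(C)$-linear idempotent on $\Secinfty(\iota^\sharp E_\Wobs)$, which by the classical correspondence $\End_{\Cinfty(C)}(\Secinfty(\iota^\sharp E_\Wobs)) \simeq \Secinfty(\End(\iota^\sharp E_\Wobs))$ is the desired smooth section $\pi$.

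The hard part will be verifying that $\pi$ is genuinely $\nabla$-parallel.  This rests on the observation that the finite family $e_\Wobs|_C(b_m)$ indexed by $m \in M_\Wobs$ consists of $\nabla$-parallel sections --- the standard basis vectors $b_m$ of $\field{R}^{M_\Wobs}$ are $d$-parallel and $e_\Wobs|_C$ has $\Cinfty_D(C)$-entries --- and spans $\iota^\sharp E_\Wobs$ pointwise by construction, combined with the fact that $(\nabla_X \pi)(s) = \nabla_X(\pi s) - \pi(\nabla_X s)$ vanishes on every $s \in \Secinfty_D(\iota^\sharp E_\Wobs, \nabla)$ because $\Pi = \pi|_{\Secinfty_D}$ preserves parallel sections by construction.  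A pleasant feature of this route is that it bypasses \autoref{Lemma:Complementary_Subbundle_Closed} entirely and hence requires no regularity assumption on $D$.
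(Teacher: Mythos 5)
Your construction of $(E_\Total,E_\Wobs,E_\Null,\nabla)$ and the verification $\module{P}\simeq\Secinfty(\VBQuad{E})$ follow the paper's proof essentially step by step: realize $\module{P}=\image(e)$ for an idempotent on a finitely generated $\SetTripleInj$-free module, obtain $E_\Total$ and $E_\Wobs\subseteq E_\Total$ from constant rank of the idempotent matrices (connectedness of $M$ plus compatibility with the $\iota$'s), obtain $E_\Null$ from the restriction of $\hat{e}_\Wobs\at{C}$ to the $\Null$-coordinates with rank constancy via connectedness of $C\slash D$, and take $\nabla=\hat{e}_\Wobs\at{C}\circ d$, which along $D$ is the flat componentwise derivative and hence holonomy-free. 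Two small remarks: in the $\Null$-component check your extension $x_2$ is superfluous, since the $\Null$-components of $\hat{s}$ are already global elements of $\Cinfty_D(M)$, so $\hat{s}\in(\algebra{A}^{(N)})_\Null$ and $\hat{s}=e_\Wobs(\hat{s})$ directly exhibits $\hat{s}\in\image(e)_\Null$ (this is how the paper argues, and it avoids any extension issue for non-closed $C$); and using $M$ both for the manifold and for the generating index set is an unfortunate clash (the paper writes $N$).

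In the ``moreover'' part you genuinely diverge. The paper chooses a constant complement $V_\Null^\perp\subseteq\field{R}^{N_\Wobs}$ of $V_\Null$ and sets $E_\Null^\perp\coloneqq\iota^\sharp E_\Wobs\cap(C\times V_\Null^\perp)$, closedness under $\nabla$ along $D$ following from its observation that $\iota^\sharp E_\Wobs\cap(C\times W)$ is preserved for any constant subspace $W$. You instead split the short exact sequence of $\Cinfty_D(C)$-modules; your identification of $Q$ with the image of the diagonal block of $\hat{e}_\Wobs\at{C}$ is correct and its projectivity over $\Cinfty_D(C)$ is purely algebraic, so indeed no regularity of $D$ enters, and your parallelness argument for $\pi$ is sound ($\nabla_X\pi$ is tensorial, vanishes on parallel sections, and the parallel sections $\hat{e}_\Wobs\at{C}(b_m)$ span every fibre). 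The one step you must spell out is the passage from $\Pi$ to $\pi$: extension of scalars a priori produces an idempotent on $\Secinfty_D(\iota^\sharp E_\Wobs,\nabla)\otimes_{\Cinfty_D(C)}\Cinfty(C)$, not on $\Secinfty(\iota^\sharp E_\Wobs)$; the clean fix is to define $\pi(s)\coloneqq\sum_m\Pi\bigl(\hat{e}_\Wobs\at{C}(b_m)\bigr)\,s^m$ via the canonical components $s^m$ of $s$, and to check $\Cinfty(C)$-linearity (hence smoothness by the classical Serre--Swan theorem), that $\pi$ restricts to $\Pi$ on parallel sections, and that it is fibrewise the identity on $E_\Null$ (again using the frame $\hat{e}_\Wobs\at{C}(b_m)$, $m\in N_\Null$). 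Granting this, your route buys robustness: it does not rely on the paper's unproved assertion $E_\Null\oplus\bigl(\iota^\sharp E_\Wobs\cap(C\times V_\Null^\perp)\bigr)=\iota^\sharp E_\Wobs$, which for an arbitrary idempotent can fail for every constant choice of $V_\Null^\perp$ (already for $\hat{e}_\Wobs\at{C}=\begin{psmallmatrix}0 & a\\ 0 & 1\end{psmallmatrix}$ with nonconstant $a\in\Cinfty_D(C)$ and $N_\Null$ the first index, where $E_\Null=0$ while $\iota^\sharp E_\Wobs$ has rank one), at the price of a longer, more module-theoretic argument; neither route needs simplicity of $D$.
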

\begin{proof}
    Let $\module{P} \simeq \image e$ for some
    $e = e^2 \colon \algebra{A}^{(N)} \to \algebra{A}^{(N)}$ with
    $N \in \SetTripleInj$ and $\algebra{A} = \Cinfty(M,C,D)$.  Then
    using the identification of module homomorphisms of free modules
    with matrices we find
    $\hat{e}_\Total \in M_{N_\Total}(\Cinfty(M))$ and
    $\hat{e}_\Wobs \in M_{N_\Wobs}(\Cinfty_D(M))$ with
    \begin{equation*}
	\Biggl(e_\Total
        \begin{psmallmatrix}
            f_1 \\ \vdots \\ f_{\abs{N_\Total}}
        \end{psmallmatrix}\Biggr)_i
        =
        \sum_{j \in N_\Total} (\hat{e}_\Total)_{ij} f_j
    \end{equation*}
    and similarly for the $\Wobs$-component. We define the vector
    bundle morphisms
    \begin{align*}
        \phi_\Total \colon
        M \times \field{R}^{N_\Total} \to M \times \field{R}^{N_\Total},
        \quad
        (p,v)
        &\mapsto
        (p, \hat{e}_\Total(p) v)
        \\
        \phi_\Wobs \colon
        M \times \field{R}^{N_\Wobs} \to M \times \field{R}^{N_\Wobs},
        \quad
        (p,v)
        &\mapsto
        (p, \hat{e}_\Wobs(p) v).
    \end{align*}
    Because $\hat{e}_\Total(p)^2 = \hat{e}_\Total(p)$ we have
    $\tr (\hat{e}_\Total(p)) = \dim \image \hat{e}_\Total(p) = \dim
    \ker (\Unit - \hat{e}_\Total(p)) \in \field{N}_0$.  As $M$ is
    assumed to be connected, the rank of $\phi_\Total$ (and similarly
    of $\phi_\Wobs$) is constant and their images define vector
    subbundles
    $E_\Total \coloneqq \image \phi_\Total \subseteq M \times
    \field{R}^{N_\Total}$ and
    $E_\Wobs \coloneqq \image \phi_\Wobs \subseteq M \times
    \field{R}^{N_\Wobs}$.  The map
    \begin{equation*}
        \psi \colon E_\Wobs \to E_\Total,
        \quad
        (p,v)
        \mapsto
        \left(
            p, \sum\nolimits_{i \in N_\Wobs} c_{\iota_N(i)} v^i
        \right)
    \end{equation*}
    with the standard basis $(c_j)$ of $\field{R}^{N_\Total}$ is an
    injective vector bundle morphism over the identity as
    $e_\Total \circ \iota_{\algebra{A}^{(N)}} =
    \iota_{\algebra{A}^{(N)}} \circ e_\Wobs$ and $\iota_N$ is assumed
    to be injective. Therefore, we can consider
    $E_\Wobs \subseteq E_\Total$ as a subbundle.

    Now we define a subbundle $E_\Null$ of $\iota^\sharp E_\Wobs$
    using the restriction of $\hat{e}_\Wobs$ to the subspace
    $V_\Null \subseteq \field{R}^{N_\Wobs}$ spanned by the standard
    basis $b_i$ for $i \in N_\Null$.  Indeed, we have
    \begin{equation*}
	\hat{e}_\Wobs(p)\at{V_\Null} \colon V_\Null \to V_\Null
    \end{equation*}
    for $p \in C$ as
    $e_\Wobs(\algebra{A}^{(N)}_\Null) \subseteq
    \algebra{A}^{(N)}_\Null$ and therefore
    $\bigl(\hat{e}_\Wobs(p)(v)\bigr)^i = 0$ for
    $i \in N_\Wobs \setminus N_\Null$.  Now we can proceed as before
    and define a vector bundle morphism
    \begin{equation*}
	\phi_\Null \colon C \times V_\Null \to C \times V_\Null,
	\quad
        (p,v) \mapsto (p, \hat{e}_\Wobs(p)(v))
    \end{equation*}
    over the identity. As the entries of $(\hat{e}_\Wobs)_{ij}\at{C}$
    are in $\Cinfty_D(C)$ we have a well-defined and continuous map
    \begin{equation*}
	C \slash D \ni [p]
        \mapsto
        \hat{e}_\Wobs(p)\at{V_\Null} \in M_{N_\Null}(\field{R}) .
    \end{equation*}
    Using $\hat{e}_\Wobs(p)^2 = \hat{e}_\Wobs(p)$ and the fact that
    $C \slash D$ is assumed to be connected with respect to the final
    topology we see analogously that $\phi_\Null$ has constant rank.
    This way we obtain a subbundle
    $E_\Null \coloneqq \image \phi_\Null \subseteq \iota^\sharp
    E_\Wobs$.  Choosing some complementary subspace $V_\Null^\perp$ to
    $V_\Null \subseteq \field{R}^{N_\Wobs}$ we obtain another
    subbundle
    $E_\Null^\perp \coloneqq \iota^\sharp E_\Wobs \cap \bigl( C \times
    V_\Null^\perp \bigr)$ of $\iota^\sharp E_\Wobs$. Indeed,
    $E_\Null^\perp$ is of constant rank because
    $E_\Null^\perp \oplus E_\Null = \iota^\sharp E_\Wobs$ and it is
    the intersection of two vector bundles over the manifold $C$.

    Having defined the subbundles, we now look at the covariant
    derivatives: On $\iota^\sharp E_\Wobs$ we can define a covariant
    derivative
    \begin{equation*}
        \nabla_X s
        \coloneqq
        \sum_{i, j \in N_\Wobs} b_i (\hat{e}_\Wobs(p))_{ij} \Lie_X(s^j),
    \end{equation*}
    where $X \in \Secinfty(TC)$,
    $s \in \Secinfty(\iota^\sharp E_\Wobs)$ and $\Lie_X$ denotes the
    Lie derivative of functions.  Because of
    $(\hat{e}_\Wobs)_{ij}\at{C} \in \Cinfty_D(C)$ and
    $s^i = \sum_{j \in N_\Wobs} (\hat{e}_\Wobs(p))_{ij} s^j$ for
    $s \in \Secinfty(\iota^\sharp E_\Wobs)$ we have
    \begin{equation*}
        \nabla_X s = \sum_{i \in N_\Wobs} b_i \Lie_X(s^i)
    \end{equation*}
    for $X \in \Secinfty(D)$ which makes it clear that the parallel
    transport is path-independent along paths inside the leaves of
    $D$.  It remains to show that the subbundles $E_\Null$ and
    $E_\Null^\perp$ are closed under $\nabla\at{D}$.  In fact, for any
    subspace $W \subseteq \field{R}^{N_\Wobs}$ it holds that
    $\Secinfty(\iota^\sharp E_\Wobs \cap (C \times W))$ is closed under
    differentiating in the direction of $D$: To see this, let
    $\Pi^\perp \colon \field{R}^{N_\Wobs} \to W^\perp$ be some linear
    projection onto a complementary subspace $W^\perp$ of $W$, so
    $\ker \Pi^\perp = W$.  Then for
    $s \in \Secinfty(\iota^\sharp E_\Wobs \cap (C \times W))$ and
    $v_p \in D$ we have
    \begin{equation*}
        \Pi^\perp \bigl(\nabla_{v_p} s \bigr)
        =
        \sum_{i \in N_\Wobs} \Pi^\perp(b_i) v_p(s^i)
        =
        \sum_{i, j \in N_\Wobs}
        b_j \Pi^\perp_{ij} v_p(s^i)
        =
        \sum_{i, j \in N_\Wobs}
        b_j v_p\bigl( \Pi^\perp_{ij} s^i \bigr)
        =
        \sum_{j \in N_\Wobs}
        b_j v_p\bigl( \bigl(\Pi^\perp (s)\bigr)^j \bigr)
        =
        0
    \end{equation*}
    for the \emph{constant} matrix coefficients $\Pi^\perp_{ij}$ of
    $\Pi^\perp$.  Therefore,
    $\nabla_{v_p} s \in \iota^\sharp E_\Wobs \cap (C \times W)$.  This
    shows that $E_\Null$ and $E_\Null^\perp$ are closed under
    $\nabla$.

    It remains to show that for
    $\VBQuad{E} \coloneqq (E_\Total, E_\Wobs,E_\Null, \nabla)$ the
    sections $\Secinfty(\VBQuad{E})$ are isomorphic to the module
    $\module{P}$ we started with.  For this, we show that
    $\Secinfty(\VBQuad{E}) \simeq \image e$.  We define
    $\Xi \colon \image(e) \to \Secinfty(\VBQuad{E})$ by
    \begin{align*}
        \Xi_\Total \colon \image(e_\Total) \ni
        \begin{psmallmatrix}
            \hat{s}^1 \\ \vdots \\ \hat{s}^{\abs{N_\Total}}
        \end{psmallmatrix}
        &\mapsto
        \bigl( p \mapsto (p, \hat{s}(p)) \bigr)
        \in \Secinfty(\VBQuad{E})_\Total
        \\
        \shortintertext{and}
        \Xi_\Wobs \colon \image(e_\Wobs) \ni
        \begin{psmallmatrix}
            \hat{r}^1 \\ \vdots \\ \hat{r}^{\abs{N_\Wobs}}
        \end{psmallmatrix}
        &\mapsto
        \bigl( p \mapsto (p, \hat{r}(p)) \bigr)
        \in \Secinfty(\VBQuad{E})_\Wobs .
    \end{align*}
    Here the components fulfill $\hat{s}^i \in \Cinfty(M)$ and
    $\hat{r}^i \in \Cinfty_D(M)$.  The map $\Xi_\Total$ indeed maps
    into $\Secinfty(E_\Total)$ because
    \begin{equation*}
        \hat{s} \in \image(e_\Total)
        \iff
        e_\Total(\hat{s}) = \hat{s}
        \iff
        \hat{e}_\Total(p) (\hat{s}(p))
        =
        \hat{s}(p) \; \forall p \in M ,
    \end{equation*}
    so $(p, \hat{s}(p)) \in (E_\Total)_p$ for all $p \in M$.
    Moreover, the second component of a section
    $s \in \Secinfty(E_\Total)$ is just an element $\hat{s}$ of
    $\Cinfty(M)^{N_\Total}$ which fulfills
    $\hat{e}_\Total(p) (\hat{s}(p)) = \hat{s}(p)$ for all $p \in M$.
    Therefore $\Xi_\Total$ is surjective.  Injectivity is clear.
    Next, $\Xi_\Wobs$ maps into $\Secinfty(\VBQuad{E})_\Wobs$
    because it maps into $\Secinfty(E_\Wobs)$ for the same reasons as
    above (the components now being in
    $\Cinfty_D(M) \subseteq \Cinfty(M)$) and because
    \begin{equation*}
        \bigl(\nabla_{v_p} (\iota^\sharp r)\bigr)^i
        =
        \sum_{j \in N_\Wobs} v_p(\hat{r}^j \circ \iota)
        =
        0
    \end{equation*}
    for $r \coloneqq \Xi_\Wobs(\hat{r})$ and $v_p \in D$ as
    $\hat{r}^j \circ \iota \in \Cinfty_D(C)$.  For surjectivity of
    $\Xi_\Wobs$ note that $\bigl(\nabla_{v_p} s \bigr)^i = v_p(s^i)$
    for $v_p \in D$, $s \in \Secinfty(\iota^\sharp E_\Wobs)$.  Then it
    is clear that the second component of an element in
    $\Secinfty(\VBQuad{E})_\Wobs$ is just an element
    $\hat{r} \in \Cinfty_D(M)^{N_\Wobs}$ which fulfills
    $\hat{e}_\Wobs(p) (\hat{r}(p)) = \hat{r}(p)$ for all $p \in M$.
    Therefore, $\Xi_\Wobs$ is surjective.  Clearly, $\Xi_\Wobs$ is
    injective.  We check the condition
    $\Xi_\Total \circ \iota_{\image(e)} =
    \iota_{\Secinfty(\VBQuad{E})} \circ \Xi_\Wobs$: Let
    $\hat{r} \in \image(e_\Wobs)$.  Then
    \begin{equation*}
        \Xi_\Total(\iota_{\image(e)}(\hat{r}))
        =
        \Xi_\Total
        \Bigl(\sum_{i \in N_\Wobs} c_{\iota_N(i)} \hat{r}^i\Bigr)
        =
        \Bigl(
        p
        \mapsto
        \bigl(
        p, \bigl(
        \sum_{i \in N_\Wobs} c_{\iota_N(i)} \hat{r}^i(p)
        \bigr)
        \bigr)
        \Bigr)
    \end{equation*}
    and
    \begin{align*}
        \iota_{\Secinfty(\VBQuad{E})}(\Xi_\Wobs(\hat{r}))
        &=
        \iota_{\Secinfty(\VBQuad{E})}
        \bigl( p \mapsto (p, \hat{r}(p))\bigr)
        \\
        &=
        \bigl( p \mapsto (p, \psi(\hat{r}(p)))\bigr)
        \\
        &=
        \Bigl(
        p \mapsto
        \Bigl(
        p,
        \sum_{i \in N_\Wobs} c_{\iota_N(i)} \hat{r}^i(p)
        \Bigr)
        \Bigr) .
    \end{align*}
    Therefore $\Xi$ is compatible with the $\iota$'s.  We now have to
    look at the $\Null$-component: If $\hat{r} \in \image(e)_\Null$,
    then by definition $\hat{r}^i$ is zero on $C$ for
    $i \in N_\Wobs \setminus N_\Null$.  In fact for
    $\hat{r} \in \image(e)_\Wobs$ we have
    $\hat{r} \in \image(e)_\Null$ iff $\hat{r}^i\at{C} = 0$ for all
    $i \in N_\Wobs \setminus N_\Null$: If $\hat{r}^i\at{C} = 0$ for
    all $i \in N_\Wobs \setminus N_\Null$, then
    $\hat{r} = e_\Wobs(\hat{r}) \in
    \image\bigl(e_\Wobs\at{\algebra{A}^{(N)}_\Null})\bigr) =
    \image(e)_\Null$.  Conversely if
    $\hat{r} \in \image\bigl(e_\Wobs\at{\algebra{A}^{(N)}_\Null}\bigr)
    = \image(e)_\Null$, then $\hat{r} \in \algebra{A}^{(N)}_\Null$ as
    $e_\Wobs$ respects the $\Null$-components.  This shows
    $\hat{r}^i\at{C} = 0$ for all $i \in N_\Wobs \setminus N_\Null$.
    With this equivalence we see that $(p, \hat{r}(p)) \in F$ for
    $p \in C$ and $\hat{r} \in \image(e)_\Null$.  Therefore
    $\Xi_\Wobs(\image(e)_\Null) \subseteq
    \Secinfty(\VBQuad{E})_\Null$.  Conversely, for
    $r = (p \mapsto (p, \hat{r}(p))) \in \Secinfty(\VBQuad{E})_\Null$
    we first of all have $\hat{r} \in \image(e)_\Wobs$ but also
    $\hat{r}^i\at{C} = 0$ for $i \in N_\Wobs \setminus N_\Null$.  This
    shows that $\hat{r} \in \image(e)_\Null$ and therefore
    $\Xi_\Wobs \at{\image(e)_\Null} \colon \image(e)_\Null \to
    \Secinfty(\VBQuad{E})_\Null$ is surjective.  Therefore $\Xi$ is a
    morphism of coisotropic $\algebra{A}$-modules which has bijective
    $\Xi_\Total$ and $\Xi_\Wobs$ and $\Xi_\Wobs$ is surjective on the
    $\Null$-components.  This is equivalent to being an isomorphism of
    coisotropic modules.
\end{proof}

\begin{remark}
    Of course the existence of such a decomposition in
    \autoref{Prop:Set3Inj_Proj_Modules_As_Sections} always implies
    that $E_\Null$ is closed under $\nabla$ and that $\nabla$ is a
    holonomy-free $D$-connection.  However, without assumptions on the
    structure of $C\slash D$ the converse is not clear, see
    \autoref{Lemma:Complementary_Subbundle_Closed}.
\end{remark}

In the singular case, we are not guaranteed that taking triples of
invariant sections gives us a finitely generated projective
$\Cinfty(M,C,D)$-module.  Nevertheless, if for simplicity we consider
$M = C$, $E_\Total = E_\Wobs = E$ and
$E_\Null = C \times \lbrace 0 \rbrace$ (or $=E$),
\autoref{Ex:Singular_Projective_1D} and
\autoref{Ex:Singular_Projective_2D} give us some positive examples in
a singular setting.

In the regular case, \autoref{Prop:Set3Inj_Proj_Modules_As_Sections}
lets us formulate an equivalence of categories as in
\autoref{Theorem:Serre_Swan}: To do so, we define the following
category:
\begin{definition}[The category $\VectTriple(M,C,D)$]
    Let $\iota \colon C \to M$ be a submanifold of a manifold $M$ and
    $D \subseteq TC$ an integrable distribution.  Then the category
    $\VectTriple(M,C,D)$ has
    \begin{definitionlist}
    \item as objects quadruplets
        $(E_\Total, E_\Wobs, E_\Null, \nabla)$ consisting of a vector
        bundle $E_\Total \to M$, a vector subbundle
        $E_\Wobs \subseteq E_\Total$ and a vector subbundle
        $E_\Null \subseteq \iota^\sharp E_\Wobs$ together with a
        holonomy-free $D$-connection on $\iota^\sharp E_\Wobs$ such
        that $E_\Null$ is closed under $\nabla$.
    \item as morphisms vector bundle morphisms
        $\Phi \colon E_\Total \to F_\Total$ over the identity such
        that
        \begin{equation}
            \Phi(E_\Wobs) \subseteq F_\Wobs, \quad
            \iota^\sharp\Phi(E_\Null) \subseteq F_\Null
        \end{equation}
        and
        \begin{equation}
            \iota^\sharp \Phi \circ (\nabla^E_X s)
            = \nabla^F_X ((\iota^\sharp \Phi_\Wobs) \circ s)
        \end{equation}
        for all $s \in \Secinfty(\iota^\sharp E_\Wobs)$ and all
        $X \in \Secinfty(D)$.
    \end{definitionlist}
\end{definition}
\begin{example}
    \label{ex:ClassicalVBTriple}%
    Consider the special case $M=C$ and $D=0$, corresponding to a
    classical manifold.  Then every vector bundle $E$ over $M$ can be
    considered as an object in $\VectTriple(M,M,0)$ with identical
    $\Total$-and $\Wobs$-component and vanishing
    $\Null$-component. The $D$-connection $\nabla$ is trivial since
    $D=0$.
\end{example}

Note that---similarly to
\autoref{Lemma:Parallel_Sections_Parallel_Transport}---the
infinitesimal condition of preserving the covariant derivative can be
integrated to a condition preserving the parallel transport:
\begin{lemma}
    \label{Lemma:Morphism_Covariant_Parallel}%
    Let $D \subseteq TC$ be an integrable distribution.  Let
    $\Phi\colon E \to F$ be a vector bundle morphism over the identity
    $\id_C$.  Let $\nabla^E$ and $\nabla^F$ be $D$-connections on $E$
    (resp. $F$).  Then the following is equivalent:
    \begin{lemmalist}
    \item The vector bundle morphism $\Phi$ fulfills
        \begin{equation}
            \Phi \circ \bigl( \nabla^E_X s\bigr) = \nabla^F_X ( \Phi \circ s)
	\end{equation}
        for sections $X\in \Secinfty(D)$ and $s\in\Secinfty(E)$.
    \item For any smooth curve $\gamma \colon I \to C$ inside a leaf
        of $D$ one has for $a,b \in I$ that
        \begin{equation}
            \Phi \circ \Parallel^E_{\gamma, a \to b}
            =
            \Parallel^F_{\gamma, a \to b} \circ \Phi.
	\end{equation}
    \end{lemmalist}
\end{lemma}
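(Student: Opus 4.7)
The lemma is the standard equivalence between the infinitesimal (covariant derivative) and integrated (parallel transport) forms of compatibility of a bundle morphism, adapted to the setting of $D$-connections. I would prove both implications via the uniqueness of solutions to the parallel transport ODE along curves inside leaves of $D$, together with the pointwise differentiation formula that recovers $\nabla$ from $\Parallel$.

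For the direction $(\text{ii}) \Rightarrow (\text{i})$, fix $X \in \Secinfty(D)$, $s \in \Secinfty(E)$ and $p \in C$, and let $\gamma$ be an integral curve of $X$ with $\gamma(0) = p$; since $D$ is integrable, $\gamma$ stays in a single leaf of $D$. I would use the pointwise identity
\begin{equation*}
    \nabla^E_{X(p)} s
    =
    \frac{d}{dt}\bigg|_{t=0}
    \Parallel^{\nabla^E}_{\gamma, t \to 0}\bigl(s(\gamma(t))\bigr),
\end{equation*}
valid in the fiber $E_p$, and the analogous one for $\nabla^F$ applied to $\Phi \circ s$. Fiberwise linearity of $\Phi$ lets it commute with the $t$-derivative, while hypothesis $(\text{ii})$ lets it commute with the parallel transport, giving $\Phi\bigl(\nabla^E_{X(p)} s\bigr) = \nabla^F_{X(p)}(\Phi \circ s)$; since $p$ is arbitrary, this yields $(\text{i})$.

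For the direction $(\text{i}) \Rightarrow (\text{ii})$, fix a smooth curve $\gamma \colon I \to C$ inside a leaf of $D$, $a, b \in I$ and $v \in E_{\gamma(a)}$. Define sections along $\gamma$ by $\sigma(t) \coloneqq \Parallel^{\nabla^E}_{\gamma, a \to t}(v)$ and $\tau(t) \coloneqq \Parallel^{\nabla^F}_{\gamma, a \to t}(\Phi(v))$; both are uniquely characterized by their initial values at $a$ and by the vanishing of the induced covariant derivative along $\gamma$. I would show that $\Phi \circ \sigma$ satisfies the ODE defining $\tau$: near any $t_0 \in I$, extend $\sigma$ to a smooth local section $\tilde{s}$ of $E$ in a neighborhood of $\gamma(t_0)$, note that $\dot\gamma(t_0) \in D_{\gamma(t_0)}$ because $\gamma$ lies in a leaf, and apply the pointwise form of $(\text{i})$ to get
\begin{equation*}
    \nabla^F_{\dot\gamma(t_0)}(\Phi \circ \tilde{s})
    =
    \Phi\bigl(\nabla^E_{\dot\gamma(t_0)} \tilde{s}\bigr) = 0.
\end{equation*}
Uniqueness of solutions then forces $\Phi \circ \sigma = \tau$, and evaluating at $t = b$ delivers $(\text{ii})$.

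The main technical obstacle is the passage from the sectionwise hypothesis in $(\text{i})$, formulated for global $X \in \Secinfty(D)$, to the pointwise version needed at $\dot\gamma(t_0)$ or $X(p)$, since in the possibly singular setting of $D$ a vector in $D_p$ need not extend to a global section of $D$ on a whole neighborhood of $p$. This is handled by exploiting the definition of a $D$-connection as the restriction of a genuine covariant derivative on $TC$: the pointwise assignment $v_p \mapsto \nabla^E_{v_p} s$ is then intrinsic and depends only on $v_p$ and on $s$ near $p$, and a standard cutoff argument, combined with the existence of a $D$-valued extension of $\dot\gamma$ to a vector field along $\gamma$ inside its leaf, reduces the claim to the sectionwise intertwining assumed in $(\text{i})$.
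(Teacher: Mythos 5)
The paper itself gives no proof of this lemma: it is stated as a standard fact (in the same spirit as \autoref{Lemma:Parallel_Sections_Parallel_Transport}) and immediately used, so there is nothing of the authors' to compare against line by line. Your argument is the standard one and is essentially correct. The direction from parallel transport to covariant derivatives via the difference-quotient formula $\nabla^E_{X(p)} s = \frac{d}{dt}\big|_{t=0} \Parallel^E_{\gamma, t \to 0}(s(\gamma(t)))$, with $\gamma$ an integral curve of $X \in \Secinfty(D)$ (which stays in a leaf by integrability), together with fiberwise linearity of $\Phi$, is fine. You also correctly identify and resolve the only real subtlety in the converse: the hypothesis in (i) is formulated for sections $X \in \Secinfty(D)$, while the ODE argument needs the intertwining at the single vector $\dot\gamma(t_0) \in D_{\gamma(t_0)}$; since $\nabla^E$, $\nabla^F$ are restrictions of genuine covariant derivatives, $\nabla^E_{v_p}s$ depends only on $v_p$, and smoothness of $D$ (every $v_p \in D_p$ is a value, or a combination of values, of smooth sections of $D$, after a cutoff) bridges the gap.

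One small technical wrinkle in your $(i) \Rightarrow (ii)$ step: extending the parallel section $\sigma$ along $\gamma$ to a local section $\tilde{s}$ of $E$ with $\tilde{s} \circ \gamma = \sigma$ requires $\gamma$ to be an embedding near $t_0$, which fails where $\dot\gamma(t_0) = 0$ (e.g. $\gamma(t) = (t^3,0)$ forces a non-smooth dependence on the image point). This is easily repaired: work instead with a local frame $(e_\alpha)$ of $E$ near $\gamma(t_0)$, write $\sigma = \sum_\alpha \sigma^\alpha \, (e_\alpha \circ \gamma)$, and use the induced derivative along the curve together with the pointwise form of (i) applied to the genuine sections $e_\alpha$ to get $\Phi \circ \sigma$ parallel for $\nabla^F$; the case $\dot\gamma(t_0) = 0$ is then automatic. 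With that adjustment your proof is complete and matches what the authors evidently regard as the standard argument.
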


In the category $\VectTriple(M,C,D)$ we can turn taking invariant
sections into a functor:
\begin{definition}[Taking sections in $\VectTriple(M,C,D)$]
    Let $\iota \colon C \to M$ be a submanifold of a manifold $M$ and
    $D \subseteq TC$ an integrable distribution.  Then we define the
    functor
    \begin{equation}
	\Secinfty \colon \VectTriple(M,C,D)
	\to \cCoisoModTriple{\Cinfty(M,C,D)}
    \end{equation}
    by
    \begin{definitionlist}
    \item mapping objects
        $\VBQuad{E} \coloneqq (E_\Total, E_\Wobs, E_\Null, \nabla^E)$
        to the triple $\Secinfty(\VBQuad{E})$ of invariant sections.
    \item mapping morphisms $\Phi \colon \VBQuad{E} \to \VBQuad{F}$
        between
        $\VBQuad{E} \coloneqq (E_\Total, E_\Wobs, E_\Null, \nabla^E)$
        and
        $\VBQuad{F} \coloneqq (F_\Total, F_\Wobs, F_\Null, \nabla^F)$
        to
        \begin{equation}
            \begin{split}
                \Secinfty(\Phi)_\Total \colon
                \Secinfty(\VBQuad{E})_\Total \ni s
                &\mapsto
                \Phi \circ s \in \Secinfty(\VBQuad{F})_\Total
                \quad
                \textrm{and}
                \\
                \Secinfty(\Phi)_\Wobs \colon
                \Secinfty(\VBQuad{E})_\Wobs \ni r
                &\mapsto
                \Phi\at{E_\Wobs} \circ r
                \in \Secinfty(\VBQuad{F})_\Wobs.
            \end{split}
        \end{equation}
    \end{definitionlist}
\end{definition}
It is easily checked that this indeed gives a functor into
$\cCoisoModTriple{\Cinfty(M,C,D)}$.  By
\autoref{Prop:Set3Inj_projective_sections} we know that if $C$ is
closed and $D \subseteq TC$ is simple, then
\begin{equation}
    \Secinfty \colon \VectTriple(M,C,D)
    \to \Proj(\Cinfty(M,C,D)),
\end{equation}
where on the right hand side we have the full subcategory of finitely
generated regular projective coisotropic $\Cinfty(M,C,D)$-modules.
Moreover, we have seen in
\autoref{Prop:Set3Inj_Proj_Modules_As_Sections} that if $M$ and
$C \slash D$ are connected, then
\begin{equation}
    \Secinfty \colon \VectTriple(M,C,D)
    \to \Proj(\Cinfty(M,C,D))
\end{equation}
is essentially surjective.  It remains to show fullness and
faithfulness in order to prove the following result:
\begin{theorem}[Coisotropic Serre-Swan Theorem]
    \label{Thm:Coisotropic_Serre_Swan}%
    Let $\iota \colon C \to M$ be a closed submanifold of a connected
    manifold $M$ and let $D \subseteq TC$ be simple with connected
    leaf space $C \slash D$.  Then the functor
    \begin{equation}
        \Secinfty \colon \VectTriple(M,C,D)
        \to \Proj(\Cinfty(M,C,D))
    \end{equation}
    yields an equivalence of categories.
\end{theorem}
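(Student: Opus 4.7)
The plan is to split the theorem into essential surjectivity, faithfulness, and fullness. Essential surjectivity is already established in \autoref{Prop:Set3Inj_Proj_Modules_As_Sections} (the connectedness assumptions of the present theorem being precisely the ones needed there), and \autoref{Prop:Set3Inj_projective_sections} ensures that $\Secinfty$ lands in $\Proj(\Cinfty(M,C,D))$. What remains is to prove faithfulness and fullness of $\Secinfty$.

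Faithfulness will be straightforward. A morphism in $\VectTriple(M,C,D)$ is by definition a vector bundle morphism $\Phi \colon E_\Total \to F_\Total$ over $\id_M$ satisfying several subsidiary conditions, so it is determined by its action on fibres of $E_\Total$. If $\Secinfty(\Phi) = \Secinfty(\Psi)$, the $\Total$-components act identically on all sections of $E_\Total$; since any $v_p \in (E_\Total)_p$ is the value at $p$ of a global section (by cutoff and the classical Serre-Swan statement \autoref{Theorem:Serre_Swan}), the morphisms $\Phi$ and $\Psi$ must agree pointwise.

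For fullness, given a morphism $\Theta \colon \Secinfty(\VBQuad{E}) \to \Secinfty(\VBQuad{F})$ of coisotropic $\Cinfty(M,C,D)$-modules, I will apply the classical Serre-Swan Theorem \autoref{Theorem:Serre_Swan} to $\Theta_\Total$ to obtain a vector bundle morphism $\phi \colon E_\Total \to F_\Total$ over $\id_M$ with $\Secinfty(\phi) = \Theta_\Total$. It then suffices to check the three defining conditions $\phi(E_\Wobs) \subseteq F_\Wobs$, $\iota^\sharp \phi(E_\Null) \subseteq F_\Null$, and the intertwining of $\nabla^E$ and $\nabla^F$ on $\iota^\sharp E_\Wobs$. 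The first two conditions will follow from the $\SetTripleInj$-dual basis $(e_i, e^i)_{i \in N}$ for $\Secinfty(\VBQuad{E})$ constructed inside the proof of \autoref{Prop:Set3Inj_projective_sections}: the sections indexed by $N_\Wobs$ form a pointwise frame for $E_\Wobs$, and the compatibility $\Theta_\Total \circ \iota_{\Secinfty(\VBQuad{E})} = \iota_{\Secinfty(\VBQuad{F})} \circ \Theta_\Wobs$ forces the image of each fibre of $E_\Wobs$ under $\phi$ to land in the corresponding fibre of $F_\Wobs$. The $\Null$-case is analogous, now using that the $N_\Null$-indexed part of the dual basis has restrictions to $C$ pointwise spanning $E_\Null$ and that $\Theta_\Wobs$ respects $\Null$-components.

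The hardest step will be the compatibility of $\phi$ with the $D$-connections. Here I plan to apply \autoref{Prop:Dual_Basis_Invariant_Sections} directly to $\iota^\sharp E_\Wobs$ to obtain a pointwise dual basis $(f_j, f^j)_{j \in J}$ consisting of $\nabla^E$-parallel sections; this is exactly what simplicity of $D$ together with holonomy-freeness of $\nabla^E$ affords. An arbitrary $s \in \Secinfty(\iota^\sharp E_\Wobs)$ then decomposes as $s = \sum_j (f^j \circ s)\, f_j$ with coefficients $f^j \circ s \in \Cinfty(C)$, and the Leibniz rule together with $\nabla^E_X f_j = 0$ yields $\nabla^E_X s = \sum_j X(f^j \circ s)\, f_j$ for $X \in \Secinfty(D)$. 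The key point will be that each $f_j$ extends off $C$ (using the tube construction of \autoref{Lemma:Vector_Bundle_Isomorphism_Theta} combined with a cutoff, as in the proof of \autoref{Prop:Set3Inj_projective_sections}) to a section of $E_\Wobs$ lying in $\Secinfty(\VBQuad{E})_\Wobs$, so that its image under $\Theta$ lies in $\Secinfty(\VBQuad{F})_\Wobs$ and hence $\iota^\sharp \phi \circ f_j$ is $\nabla^F$-parallel. An analogous expansion of $\nabla^F_X(\iota^\sharp \phi \circ s)$ will then match $\iota^\sharp \phi \circ \nabla^E_X s$ term by term. This step is the most delicate because it requires a \emph{global} pointwise frame of parallel sections, which is exactly the content of the simplicity and holonomy-freeness hypotheses, and then the transfer of the intertwining from parallel sections to arbitrary ones via the Leibniz expansion.
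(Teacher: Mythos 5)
Your proposal is correct, and its overall architecture coincides with the paper's proof: essential surjectivity and the fact that $\Secinfty$ lands in $\Proj(\Cinfty(M,C,D))$ are quoted from \autoref{Prop:Set3Inj_Proj_Modules_As_Sections} and \autoref{Prop:Set3Inj_projective_sections}, faithfulness is immediate since sections hit every fibre vector, and fullness starts from the classical Serre-Swan correspondence applied to $\Theta_\Total$ and then uses the $\SetTripleInj$-dual basis of \autoref{Prop:Set3Inj_projective_sections} to check $\phi(E_\Wobs) \subseteq F_\Wobs$ and $\iota^\sharp\phi(E_\Null) \subseteq F_\Null$, exactly as in the paper. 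The one place where you take a different route is the compatibility with the $D$-connections: the paper argues on the integrated level, showing that $\iota^\sharp\Phi$ intertwines the parallel transports along leaves (using that $\chi_\Wobs(s)\at{C}$ is parallel for the invariant section $s$ through $v_p$, via \autoref{Lemma:Parallel_Sections_Parallel_Transport}) and then invokes \autoref{Lemma:Morphism_Covariant_Parallel} to pass back to covariant derivatives, whereas you argue infinitesimally, expanding an arbitrary $s \in \Secinfty(\iota^\sharp E_\Wobs)$ in a finite parallel dual basis from \autoref{Prop:Dual_Basis_Invariant_Sections} and using the Leibniz rule together with the fact that each $\iota^\sharp\phi \circ f_j$ is $\nabla^F$-parallel. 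Both work under the stated hypotheses; the paper's version needs only one invariant section through each point (already provided by the dual basis used for the $\Wobs$-condition) and avoids re-extending anything, while your version trades \autoref{Lemma:Morphism_Covariant_Parallel} and \autoref{Lemma:Parallel_Sections_Parallel_Transport} for a global parallel frame of $\iota^\sharp E_\Wobs$ plus the tube-and-cutoff extension. For the latter, do spell out that the extension $\chi_1 \tilde{f}_j$ restricts on $C$ to $f_j$ and hence lies in $\Secinfty(\VBQuad{E})_\Wobs$: this uses $\chi_1\at{C} = 1$, which follows from $\chi_1^2 + \chi_2^2 = 1$ and $\supp \chi_2 \subseteq M \setminus C$, and it is here that closedness of $C$ enters, just as in the proof of \autoref{Prop:Set3Inj_projective_sections}.
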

\begin{proof}
    Let $\VBQuad{E}, \VBQuad{F} \in \VectTriple(M,C,D)$.  We claim
    that
    \begin{equation*}
        \Hom_{\VectTriple}(\VBQuad{E}, \VBQuad{F}) \ni \Phi
        \mapsto
        (\Secinfty(\Phi)_\Total, \Secinfty(\Phi)_\Wobs)
        \in
        \Hom_{\cCoisoModTriple{\Cinfty(M,C,D)}}
        (\Secinfty(\VBQuad{E}), \Secinfty(\VBQuad{F}))
    \end{equation*}
    is a bijection. Injectivity is clear because if
    $\Secinfty(\Phi)_\Total = \Secinfty(\Phi^\prime)_\Total$, then
    $\Phi \circ s = \Phi^\prime \circ s$ for all
    $s \in \Secinfty(E_\Total)$.

    For surjectivity let a morphism $(\chi_\Total, \chi_\Wobs)$ of the
    coisotropic $\Cinfty(M,C,D)$-modules $\Secinfty(\VBQuad{E})$ and
    $\Secinfty(\VBQuad{F})$ be given.  We know that as
    $\chi_\Total \in \Hom_{\Cinfty(M)}(\Secinfty(E_\Total),
    \Secinfty(F_\Total)) \simeq \Hom_{\Vect(M)}(E_\Total,F_\Total)$
    there exists a (unique) vector bundle morphism $\Phi$ over $\id_M$
    with $\Phi \circ s = \chi_\Total(s)$ for all
    $s \in \Secinfty(E_\Total)$.  For $v_p \in E_\Wobs$ the section
    $s$ defined by
    \begin{equation*}
        s(q) \coloneqq \sum_{i \in N_\Wobs}d_i(q) d^i(v_p)
    \end{equation*}
    with a dual basis as in \autoref{Prop:Set3Inj_projective_sections}
    is an element of $\Secinfty(\VBQuad{E})_\Wobs$ and fulfills
    $s(p) = v_p$. Therefore
    \begin{equation*}
        \Phi(v_p)
        =
        \chi_\Total(\iota_1(s))\at{p}
        =
        \iota_2(\chi_\Wobs(s))\at{p} \in F_\Wobs
    \end{equation*}
    as $\chi_\Wobs(s) \in \Secinfty(\VBQuad{F})_\Wobs$. Similarly, for
    $v_p \in E_\Null$ the section $s$ defined by
    \begin{equation*}
        s(q) \coloneqq \sum_{i \in I_\Null}f_i(q) f^i(v_p)
    \end{equation*}
    with the $f_i,f^i$ as in
    \autoref{Prop:Set3Inj_projective_sections} is an element of
    $\Secinfty(\VBQuad{E})_\Null$ and fulfills $s(p) = v_p$. Therefore
    \begin{equation*}
        \Phi(v_p) = \chi_\Total(\iota_1(s))\at{p} = \iota_2(\chi_\Wobs(s))\at{p} \in F_\Null
    \end{equation*}
    as $\chi_\Wobs(s) \in \Secinfty(\VBQuad{F})_\Null$. This shows
    that
    \begin{equation*}
        \Phi(E_\Wobs) \subseteq F_\Wobs
        \quad
        \textrm{and}
        \quad
        \iota^\sharp\Phi(E_\Null) \subseteq F_\Null .
    \end{equation*}
    We need to check that $\iota^\sharp \Phi_\Wobs$ preserves the
    covariant derivatives. For this let
    $v_p \in \iota^\sharp E_\Wobs$. Then as already used above there
    exists a section $s \in \Secinfty(\VBQuad{E})_\Wobs$ with
    $s(p) = v_p$ and for $\gamma$ being an arbitrary curve in a leaf
    of $D$ connecting $p,q \in C$ we have
    \begin{equation*}
        \Parallel^F_{\gamma, p \to q}(\Phi(v_p))
        =
        \Parallel^F_{\gamma, p \to q}(\chi_\Wobs(s)\at{p})
        =
        \chi_\Wobs(s)\at{q}
        =
        \Phi(s(q))
        =
        \Phi\bigl(\Parallel^E_{\gamma, p \to q}(s(p))\bigr)
        =
        \Phi\bigl(\Parallel^E_{\gamma, p \to q}(v_p)\bigr),
    \end{equation*}
    because of
    $\chi_\Wobs(s)\at{C} \in \Secinfty_D(\iota^\sharp F_\Wobs,
    \nabla^F)$ and
    \autoref{Lemma:Parallel_Sections_Parallel_Transport}.  By
    \autoref{Lemma:Morphism_Covariant_Parallel} this shows that
    $\iota^\sharp \Phi_\Wobs$ preserves the covariant derivatives.
    Therefore, $\Phi$ is a preimage of $(\chi_\Total, \chi_\Wobs)$ in
    $\Hom_{\VectTriple(M,C,D)}(\VBQuad{E}, \VBQuad{F})$.
\end{proof}
\begin{remark}
    The classical Serre-Swan Theorem can be understood as a special
    case of \autoref{Thm:Coisotropic_Serre_Swan} as follows: By
    \autoref{ex:ClassicalVBTriple} the category $\Vect(M)$ of vector
    bundles over a manifold $M$ can be embedded into the category
    $\VectTriple(M,M,0)$.  Similarly, the category $\Proj(\Cinfty(M))$
    of finitely generated projective modules over $\Cinfty(M)$ can be
    embedded into the category $\Proj(\Cinfty(M,M,0))$, see
    \autoref{ex:ClassicalProjModule}.  Then it is easy to see that the
    functor of taking sections preserves these subcategories,
    reproducing the classical Serre-Swan Theorem.
\end{remark}

%
%

\subsection{Reduction of Regular Projective Coisotropic $\Cinfty(M,C,D)$-Modules}

As a final step, we want to look at the reduction functor
$\red \colon \cCoisoModTriple{\algebra{A}} \to
\cModules{\algebra{A}_\red}$: Knowing that---under some regularity
assumptions---regular projective coisotropic $\Cinfty(M,C,D)$-modules
are essentially triples of vector bundles together with a
holonomy-free $D$-connection, we can of course ask whether the
quotient from \autoref{Prop:Quotient_Vector_Bundles} is in some sense
compatible with reduction of coisotropic modules, i.e.  taking the
quotient of the $\Wobs$- and the $\Null$-component.  As we will see in
the following, this is indeed the case.  However, we need yet another
quotient of vector bundles in order to cope with the sections taking
values in $E_\Null$: For a vector bundle $E \to C$ and a subbundle
$F \subseteq E$ we can consider the fiberwise quotient vector bundle
$\frac{E}{F} \to C$.  It has the property that the projection
$\Pi \colon E \to \frac{E}{F}$ is both a submersion and a vector
bundle morphism.  Moreover, if $\nabla$ is a $D$-connection on $E$
such that $F$ is closed under $\nabla$, then there exists a unique
$D$-connection $\widehat{\nabla}$ on $\frac{E}{F}$ which fulfills
\begin{equation}
    \label{Eq:Reduced_Nabla}
    \widehat{\nabla}_X\bigl( \Pi \circ s \bigr)
    =
    \Pi \circ \bigl( \nabla_X s \bigr)
\end{equation}
for all $X \in \Secinfty(D), s \in \Secinfty(E)$.  Note also that for
any complementary vector subbundle $F^\perp$ to $F$ the map
\begin{equation}
    \psi \colon F^\perp \to \tfrac{E}{F},
    \quad
    v_p \mapsto [v_p]
\end{equation}
is an isomorphism of vector bundles. Using this, it is straightforward
to prove the following lemma:
\begin{lemma}
    \label{Lemma:Section_Reduction_Complementary}%
    Let $\iota \colon C \to M$ be a closed submanifold of a manifold
    $M$ and $D \subseteq TC$ an integrable distribution.  Let
    $E_\Wobs \to M$ be a vector bundle and
    $E_\Null \subseteq \iota^\sharp E_\Wobs$ a subbundle. Assume that
    $\nabla$ is a $D$-connection on $\iota^\sharp E_\Wobs$, such that
    $E_\Null$ is closed under $\nabla$ and there exists a
    complementary subbundle $E_\Null^\perp$ which is also closed under
    $\nabla$.  Then for
    \begin{equation}
        \Secinfty_D(E_\Wobs, \nabla)
        \coloneqq
        \lbrace
        s \in \Secinfty(E_\Wobs)
        \mid
        \nabla_X \iota^\sharp s = 0
        \quad
        \forall X \in \Secinfty(D)
        \rbrace
    \end{equation}
    and
    \begin{equation}
        \Secinfty_D(E_\Wobs, \nabla)_\Null
        \coloneqq
        \lbrace
        s \in \Secinfty_D(E_\Wobs, \nabla)
        \mid
        \iota^\sharp s \in \Secinfty(E_\Null)
        \rbrace
    \end{equation}
    the map
    \begin{equation}
        \label{Eq:Isomorphism_Quotient_Sections}
	\Secinfty_D(E_\Wobs, \nabla)
        \slash
        \Secinfty_D(E_\Wobs, \nabla)_\Null
	\ni [s]
        \mapsto
        \Pi \circ \iota^\sharp s \in
	\Secinfty_D
        \Bigl(
        \tfrac{\iota^\sharp E_\Wobs}{E_\Null}, \widehat{\nabla}
        \Bigr)
    \end{equation}
    with
    $\Pi \colon \iota^\sharp E_\Wobs \to \frac{\iota^\sharp
      E_\Wobs}{E_\Null}$ and $\widehat{\nabla}$ as in
    \eqref{Eq:Reduced_Nabla} defines an isomorphism of
    $\Cinfty_D(C)$-modules.
\end{lemma}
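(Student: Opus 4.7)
The plan is to verify that $\Psi \colon [s] \mapsto \Pi \circ \iota^\sharp s$ is a well-defined, $\Cinfty_D(C)$-linear bijection, splitting the argument into well-definedness with linearity, injectivity, and surjectivity. The hard part will be surjectivity, which requires first inverting $\Pi$ via the complementary subbundle $E_\Null^\perp$ and then extending sections from $C$ to all of $M$.

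For well-definedness, I would observe that for $s \in \Secinfty_D(E_\Wobs, \nabla)$ and $X \in \Secinfty(D)$ the defining property \eqref{Eq:Reduced_Nabla} of $\widehat{\nabla}$ yields $\widehat{\nabla}_X(\Pi \circ \iota^\sharp s) = \Pi \circ \nabla_X \iota^\sharp s = 0$, so the image indeed lies in $\Secinfty_D(\iota^\sharp E_\Wobs / E_\Null, \widehat{\nabla})$. Sections in $\Secinfty_D(E_\Wobs, \nabla)_\Null$ are sent to zero because $\iota^\sharp s \in \Secinfty(E_\Null) = \Secinfty(\ker \Pi)$, so $\Psi$ descends to the quotient. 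Since $C$ is closed, the map $\iota^* \colon \Cinfty_D(M) \to \Cinfty_D(C)$ is surjective with kernel $\algebra{J}_C$, giving the identification $\Cinfty_D(M)/\algebra{J}_C \cong \Cinfty_D(C)$, and the identity $\iota^\sharp(f s) = (\iota^* f)(\iota^\sharp s)$ shows $\Psi$ is $\Cinfty_D(C)$-linear. Injectivity is immediate: $\Pi \circ \iota^\sharp s = 0$ forces $\iota^\sharp s \in \Secinfty(E_\Null)$, hence $s \in \Secinfty_D(E_\Wobs, \nabla)_\Null$ and $[s] = 0$.

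For surjectivity, given $r \in \Secinfty_D(\iota^\sharp E_\Wobs/E_\Null, \widehat{\nabla})$ I would first construct a $\nabla$-parallel lift $\tilde r$ of $r$ over $C$. Using the vector bundle isomorphism $\psi \colon E_\Null^\perp \to \iota^\sharp E_\Wobs / E_\Null$, $v_p \mapsto [v_p]$, I set $\tilde r \coloneqq \psi^{-1} \circ r \in \Secinfty(E_\Null^\perp) \subseteq \Secinfty(\iota^\sharp E_\Wobs)$, which satisfies $\Pi \circ \tilde r = r$. To verify $\tilde r$ is $\nabla$-parallel, I combine the closedness of $E_\Null^\perp$ under $\nabla$ (which gives $\nabla_X \tilde r \in \Secinfty(E_\Null^\perp)$ for $X \in \Secinfty(D)$) with the computation $\Pi \circ \nabla_X \tilde r = \widehat{\nabla}_X(\Pi \circ \tilde r) = \widehat{\nabla}_X r = 0$; since $\Pi|_{E_\Null^\perp} = \psi$ is injective, $\nabla_X \tilde r = 0$.

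The remaining step, extending $\tilde r$ from $C$ to a section of $E_\Wobs$ over $M$, is the most delicate but is handled by machinery already at hand: \autoref{Lemma:Vector_Bundle_Isomorphism_Theta} produces a neighborhood $U \supseteq \iota(C)$, a retraction $t \colon U \to C$, and a vector bundle isomorphism $\Theta \colon E_\Wobs|_U \to t^\sharp(\iota^\sharp E_\Wobs)$ restricting to the identity on $C$. I would then define $s_U(p) \coloneqq \Theta^{-1}(p, \tilde r(t(p)))$, a smooth section of $E_\Wobs|_U$ satisfying $\iota^\sharp s_U = \tilde r$. Multiplying by a cutoff $\chi \in \Cinfty(M)$ equal to $1$ near $C$ and supported in $U$ (available because $C$ is closed) gives $s \coloneqq \chi \cdot s_U \in \Secinfty(E_\Wobs)$, still with $\iota^\sharp s = \tilde r$. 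Since the $\nabla$-parallel condition only sees values on $C$, we conclude $s \in \Secinfty_D(E_\Wobs, \nabla)$ and $\Psi([s]) = \Pi \circ \tilde r = r$, completing surjectivity.
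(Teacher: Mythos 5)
Your proposal is correct and follows exactly the route the paper intends: the paper leaves this lemma as ``straightforward to prove'' using the isomorphism $\psi \colon E_\Null^\perp \to \iota^\sharp E_\Wobs / E_\Null$ introduced immediately before it, which is precisely your device for lifting a parallel section of the quotient, and the defining property \eqref{Eq:Reduced_Nabla} of $\widehat{\nabla}$ handles well-definedness and parallelism just as you argue. Your extension step from $C$ to $M$ via \autoref{Lemma:Vector_Bundle_Isomorphism_Theta} and a cutoff supported in the tubular neighborhood (using closedness of $C$) is a needed detail the paper leaves implicit, and you supply it correctly.
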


In the case of a simple $D \subseteq TC$ we can thus prove that
reduction is compatible with taking sections:
\begin{theorem}[Reduction of triples of invariant sections]
    Let $\iota \colon C \to M$ be a closed submanifold and let
    $D \subseteq TC$ be simple.  Moreover, let
    $\VBQuad{E} \in \VectTriple(M,C,D)$.  Then
    \begin{equation}\label{Eq:Reduced_Sections_Triple}
	\bigl(\Secinfty(\VBQuad{E}) \bigr)_\red
	\simeq \Secinfty\Bigl( \tfrac{\iota^\sharp E_\Wobs}{E_\Null}
	\slash \mathord{\sim}_{\widehat{\nabla}} \Bigr)
    \end{equation}
    with $\widehat{\nabla}$ as in \eqref{Eq:Reduced_Nabla}. More
    precisely, the diagram
    \begin{equation}
        \begin{tikzcd}
            \VectTriple(M,C,D)
            \arrow{r}{\red}
            \arrow{d}[swap]{\Secinfty}
            & \Vect(C \slash D)
            \arrow{d}{\Secinfty}
            \\
            \Proj(\Cinfty(M,C,D))
            \arrow{r}{\red}
            & \Proj(\Cinfty(C \slash D))
        \end{tikzcd}
    \end{equation}
    commutes up to a natural isomorphism, where
    $\red \colon \VectTriple(M,C,D) \to \Vect(C \slash D)$ is the
    functor mapping objects $(E_\Total, E_\Wobs, E_\Null, \nabla)$ to
    $\frac{\iota^\sharp E_\Wobs}{E_\Null} \slash
    \mathord{\sim}_{\widehat{\nabla}}$ and morphisms to their obvious
    induced counterparts.
\end{theorem}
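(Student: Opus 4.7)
The plan is to assemble the isomorphism \eqref{Eq:Reduced_Sections_Triple} from two facts already at our disposal, namely \autoref{Lemma:Section_Reduction_Complementary} and \autoref{Lemma:Isomorphism_Invariant_Sections_Quotient_Sections}, and then to upgrade it to a natural transformation. By definition
\begin{equation*}
    \bigl(\Secinfty(\VBQuad{E})\bigr)_\red
    =
    \Secinfty(\VBQuad{E})_\Wobs \big/ \Secinfty(\VBQuad{E})_\Null,
\end{equation*}
and the module on the right is exactly $\Secinfty_D(E_\Wobs,\nabla) / \Secinfty_D(E_\Wobs,\nabla)_\Null$ in the notation of \autoref{Lemma:Section_Reduction_Complementary}.

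First, to apply that lemma we need a subbundle $E_\Null^\perp \subseteq \iota^\sharp E_\Wobs$ complementary to $E_\Null$ and closed under $\nabla$; since $D$ is simple and $\nabla$ is holonomy-free, such a complement is provided by \autoref{Lemma:Complementary_Subbundle_Closed}. Hence \autoref{Lemma:Section_Reduction_Complementary} gives an isomorphism of $\Cinfty_D(C)$-modules (equivalently, of $\Cinfty(C\slash D)$-modules via $\pi^*$)
\begin{equation*}
    \bigl(\Secinfty(\VBQuad{E})\bigr)_\red
    \;\xrightarrow{\;\simeq\;}\;
    \Secinfty_D\Bigl(\tfrac{\iota^\sharp E_\Wobs}{E_\Null},\widehat{\nabla}\Bigr).
\end{equation*}
Next I would verify that the induced connection $\widehat{\nabla}$ is again holonomy-free; this is immediate from \eqref{Eq:Reduced_Nabla} since the parallel transport of $\widehat{\nabla}$ is obtained by pushing forward that of $\nabla$ along $\Pi$, and path-independence is preserved under such pushforwards. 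Then \autoref{Lemma:Isomorphism_Invariant_Sections_Quotient_Sections} yields
\begin{equation*}
    \Secinfty_D\Bigl(\tfrac{\iota^\sharp E_\Wobs}{E_\Null},\widehat{\nabla}\Bigr)
    \;\simeq\;
    \Secinfty\Bigl(\tfrac{\iota^\sharp E_\Wobs}{E_\Null}\big/\mathord{\sim}_{\widehat{\nabla}}\Bigr),
\end{equation*}
and composing both isomorphisms proves \eqref{Eq:Reduced_Sections_Triple}.

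For the commutativity of the diagram up to a natural isomorphism, I denote the composite isomorphism above by $\eta_{\VBQuad{E}}$. What remains is to check that for every morphism $\Phi\colon \VBQuad{E} \to \VBQuad{F}$ in $\VectTriple(M,C,D)$ the square
\begin{equation*}
    \begin{tikzcd}
        (\Secinfty(\VBQuad{E}))_\red
        \arrow{r}{\eta_{\VBQuad{E}}}
        \arrow{d}[swap]{(\Secinfty(\Phi))_\red}
        & \Secinfty(\red(\VBQuad{E}))
        \arrow{d}{\Secinfty(\red(\Phi))}
        \\
        (\Secinfty(\VBQuad{F}))_\red
        \arrow{r}{\eta_{\VBQuad{F}}}
        & \Secinfty(\red(\VBQuad{F}))
    \end{tikzcd}
\end{equation*}
commutes. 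This reduces, via the explicit formulas for $\eta$, to the statement that the reduced morphism $\red(\Phi)\colon \tfrac{\iota^\sharp E_\Wobs}{E_\Null}\big/\mathord{\sim}_{\widehat{\nabla}^E} \to \tfrac{\iota^\sharp F_\Wobs}{F_\Null}\big/\mathord{\sim}_{\widehat{\nabla}^F}$ is the unique map making the obvious square of quotients commute. Since $\Phi$ maps $E_\Wobs$ to $F_\Wobs$, $E_\Null$ to $F_\Null$ (after restriction to $C$), and intertwines the two $D$-connections (and therefore by \autoref{Lemma:Morphism_Covariant_Parallel} also the parallel transports), descending to the two successive quotients is forced, and naturality follows from a straightforward diagram chase.

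The mildly technical point, and the only real obstacle, is the verification that $\widehat{\nabla}$ remains holonomy-free and that the definition of $\red$ on morphisms is well-posed at the level of vector bundles over $C\slash D$; both rely on reading $\widehat{\nabla}$ through the identification with $E_\Null^\perp$ provided by \autoref{Lemma:Complementary_Subbundle_Closed} and then invoking \autoref{Prop:Quotient_Vector_Bundles}. Once these points are in place, the rest of the proof is a routine unwinding of definitions.
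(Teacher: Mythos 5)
Your proposal is correct and follows essentially the same route as the paper: it obtains the complement via \autoref{Lemma:Complementary_Subbundle_Closed}, composes the isomorphisms of \autoref{Lemma:Section_Reduction_Complementary} and \autoref{Lemma:Isomorphism_Invariant_Sections_Quotient_Sections} after checking that $\widehat{\nabla}$ is holonomy-free (your pushforward-of-parallel-transport argument is exactly what the paper extracts from \autoref{Lemma:Morphism_Covariant_Parallel}), and then verifies naturality by showing the reduced morphism descends through the two quotients. The paper's proof differs only in spelling out the explicit formulas for $\red(\Phi)$ and for the natural isomorphism $\eta_{\VBQuad{E}}$.
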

\begin{proof}
    By \autoref{Lemma:Complementary_Subbundle_Closed} there exists a
    complementary subbundle $E_\Null^\perp$, which is closed under
    $\nabla$.  Thus by \autoref{Lemma:Section_Reduction_Complementary}
    we have
    \begin{equation*}
        \bigl(\Secinfty(\VBQuad{E}) \bigr)_\red
        \simeq
        \Secinfty_D
        \Bigl(
        \tfrac{\iota^\sharp E_\Wobs}{E_\Null}, \widehat{\nabla}
        \Bigr).
    \end{equation*}
    Using the fact that
    $\Pi \colon \iota^\sharp E_\Wobs \to \frac{\iota^\sharp
      E_\Wobs}{E_\Null}$ is compatible with the covariant derivatives
    we see that by \autoref{Lemma:Morphism_Covariant_Parallel}
    $\widehat{\nabla}$ is a holonomy-free $D$-connection.  Thus
    \autoref{Lemma:Isomorphism_Invariant_Sections_Quotient_Sections}
    yields the first result.  To see that the diagram of functors
    commutes up to natural isomorphism first note that
    $\red \colon \VectTriple(M,C,D) \to \Vect(C \slash D)$ maps
    morphisms $\Phi \colon \VBQuad{E} \to \VBQuad{F}$ in
    $\VectTriple(M,C,D)$ to vector bundle morphisms $\phi$ over the
    identity $\id_{C \slash D}$ explicitly given by
    \begin{equation*}
        \phi\Bigl( \bigl[ [v_p]_{E_\Null} \bigr]_{\widehat{\nabla}^E} \Bigr)
        =
        \bigl[ [\Phi(v_p)]_{F_\Null} \bigr]_{\widehat{\nabla}^F}.
    \end{equation*}
    The map $\phi$ is well-defined by the assumptions on $\Phi$ and a
    vector bundle morphism because the projections
    $\iota^\sharp E_\Wobs \to \frac{\iota^\sharp E_\Wobs}{E_\Null}$,
    $\frac{\iota^\sharp E_\Wobs}{E_\Null} \to \frac{\iota^\sharp
      E_\Wobs}{E_\Null} \slash \mathord{\sim}_{\widehat{\nabla}^E}$,
    $\iota^\sharp F_\Wobs \to \frac{\iota^\sharp F_\Wobs}{F_\Null}$
    and
    $\frac{\iota^\sharp F_\Wobs}{F_\Null} \to \frac{\iota^\sharp
      F_\Wobs}{F_\Null} \slash \mathord{\sim}_{\widehat{\nabla}^F}$
    are submersions and vector bundle morphisms.  Then, unwinding the
    definitions of the functors, one sees that the diagram commutes up
    to the natural isomorphisms $\eta$ given by
    \begin{align*}
	\eta_{\VBQuad{E}} \colon
        \bigl(\Secinfty(\VBQuad{E}) \bigr)_\red
	&\to
        \Secinfty\bigl(
        \tfrac{\iota^\sharp E_\Wobs}{E_\Null}
        \slash
        \mathord{\sim}_{\widehat{\nabla}}
        \bigr),
        \\
	[s] \mapsto \Bigl( C \slash D \ni q
        &\mapsto
	\Bigl[
        \bigl[
        \iota^\sharp s(\pi^{-1}(q)
        \bigr]_{E_\Null}
        \Bigr]_{\widehat{\nabla}}
        \in
	\tfrac{\iota^\sharp E_\Wobs}{E_\Null}
        \slash
        \mathord{\sim}_{\widehat{\nabla}} \Bigr]
        \Bigr),
    \end{align*}
    where $\pi^{-1}(q) \in C$ is an arbitrary element of
    $\pi^{-1}(\lbrace q \rbrace)$: This $\eta_{\VBQuad{E}}$ is just
    the composition of \eqref{Eq:Isomorphism_Quotient_Sections} and
    \eqref{Eq:Isomorphism_Invariant_Sections_Inverse}.
\end{proof}

A result like this was to be expected: In
\autoref{prop:ProjectiveReduction} we already saw that the reduction
of a regular projective coisotropic $\Cinfty(M,C,D)$-module yields a
projective $\Cinfty_D(C) \simeq \Cinfty(C \slash D)$-module.
\autoref{Theorem:Serre_Swan} tells us that those are essentially given
by sections of vector bundles over $C \slash D$.

\begin{example}[Bott connection]
    Let $D \subseteq TC$ be an integrable distribution of constant
    rank.  The Bott connection with
    \begin{equation}
        \nabla \colon
        \Secinfty(D) \times \Secinfty\bigl(\tfrac{TC}{D}\bigr)
        \to
        \Secinfty\bigl(\tfrac{TC}{D}\bigr),
        \quad
        \nabla_X (\Pi \circ Y) = \Pi([X,Y])
    \end{equation}
    for $X \in \Secinfty(D), Y \in \Secinfty(TC)$ and the projection
    $\Pi \colon TC \to \frac{TC}{D}$ is flat in direction $D$,
    cf. \cite[Lemma~6.3]{bott:1972a}.  It is not hard to check that if
    $\pi \colon C \to C \slash D$ is a submersion,
    \begin{equation}
        \tfrac{TC}{D} \ni \Pi(v_p)
        \mapsto
        (p, T\pi(v_p))
        \in \pi^\sharp\bigl(T\bigl( C \slash D\bigr)\bigr)
    \end{equation}
    is a vector bundle isomorphism over the identity and
    \begin{equation}
        \tfrac{TC}{D} \!\bigm\slash \mathord{\sim}_\nabla
        \,\simeq
        T\bigl(C \slash D\bigr) .
    \end{equation}
    To interpret this in the context of coisotropic triples, first we
    need to come up with a $D$-connection $\widetilde{\nabla}$ on $TC$
    which induces $\nabla$ on $\frac{TC}{D}$ via
    \eqref{Eq:Reduced_Nabla}.  This can be done by choosing an
    arbitrary complement $D^\perp \subseteq TC$ to $D$ and using the
    isomorphism
    $\psi \colon D^\perp \ni v_p \mapsto [v_p] \in \frac{TC}{D}$ of
    vector bundles to carry $\nabla$ over to $D^\perp$, yielding some
    $\widetilde{\nabla}^{D^\perp}$.  Now we need to choose an
    arbitrary $D$-connection $\widetilde{\nabla}^D$ on the vector
    bundle $D$. Their direct sum
    $\widetilde{\nabla} \coloneqq \widetilde{\nabla}^D \oplus
    \widetilde{\nabla}^{D^\perp}$ then is a $D$-connection on $TC$
    inducing $\nabla$ on $\frac{TC}{D}$ via \eqref{Eq:Reduced_Nabla}.
    Hence, \autoref{Lemma:Section_Reduction_Complementary} implies
    \begin{equation}
        \Secinfty_D(TC, \widetilde{\nabla})
        \bigm\slash
        \Secinfty_D(D, \widetilde{\nabla})
        \simeq
        \Secinfty_D\bigl(\tfrac{TC}{D}, \nabla \bigr)
        \simeq
        \Secinfty\Bigl(\bigl( \tfrac{TC}{D}, \nabla\bigr)
        \bigm\slash
        \mathord{\sim}_\nabla\Bigr)
        \simeq
        \Secinfty\bigl(T(C\slash D)\bigr) .
    \end{equation}
    To incorporate this construction into the setting of coisotropic
    $\Cinfty(M,C,D)$-modules, we might need to restrict to a tubular
    neighborhood of $C$:
    \autoref{Lemma:Vector_Bundle_Isomorphism_Theta} allows to find a
    neighborhood $U \subseteq M$ of a closed submanifold
    $\iota \colon C \to M$ such that
    $TM\at{U} = TU \simeq t^\sharp(\iota^\sharp(TM))$ for a submersion
    $t \colon U \to C$ with $t \circ \iota = \id_C$. Therefore,
    \begin{equation}
        \widetilde{TC} \coloneqq t^\sharp(TC)
        \subseteq
        t^\sharp(\iota^\sharp(TM)) \simeq TU
    \end{equation}
    defines a subbundle of $TU$ which restricts to $TC$ on $C$.  We
    arrive at the coisotropic $\Cinfty(U,C,D)$-module
    $\Secinfty(TU, \widetilde{TC}, D, \widetilde{\nabla})$ with
    \begin{equation}
	\Secinfty\bigl(
        TU, \widetilde{TC}, D, \widetilde{\nabla}
        \bigr)_\red
	\simeq
        \Secinfty\bigl(T(C\slash D)\bigr)
    \end{equation}
    by \eqref{Eq:Reduced_Sections_Triple}.  If in addition
    $\widetilde{\nabla}$ is a holonomy-free $D$-connection also on
    $D$, (it is always holonomy-free on $D^\perp$), then
    \autoref{Prop:Set3Inj_projective_sections} tells us that
    $\Secinfty(TU, \widetilde{TC}, D, \widetilde{\nabla})$ is a
    finitely generated regular projective coisotropic
    $\Cinfty(U,C,D)$-module.
\end{example}

%
%

{
  \footnotesize
%

\begin{thebibliography}{10}

	\bibitem {bayen.et.al:1978a}
	\chairxauthorbibfont{Bayen, F., Flato, M., Fr{{\o}}nsdal, C.,
	Lichnerowicz, A.,
		Sternheimer, D.:}\newblock \chairxtitlebibfont{Deformation
		Theory and
		Quantization}.
	\newblock Ann. Phys.  \textbf{111} (1978), 61--151.
	\par\csname bayen.et.al:1978achairxnote\endcsname

	\bibitem {bordemann:2005a}
	\chairxauthorbibfont{Bordemann, M.:}\newblock
	\chairxtitlebibfont{(Bi)Modules,
		morphisms, and reduction of star-products: the symplectic
		case, foliations,
		and obstructions}.
	\newblock Trav. Math.  \textbf{16} (2005), 9--40.
	\par\csname bordemann:2005achairxnote\endcsname

	\bibitem {bott:1972a}
	\chairxauthorbibfont{Bott, R.:}\newblock
	\chairxtitlebibfont{Lectures on
		characteristic classes and foliations}.
	\newblock Lectures on Algebraic and Differential Topology.
	Lecture Notes in
	Mathematics  \textbf{279} (1972), 1--94.
	\par\csname bott:1972achairxnote\endcsname

	\bibitem {bursztyn.waldmann:2000b}
	\chairxauthorbibfont{Bursztyn, H., Waldmann, S.:}\newblock
	\chairxtitlebibfont{Deformation Quantization of Hermitian Vector
	Bundles}.
	\newblock Lett. Math. Phys.  \textbf{53} (2000), 349--365.
	\par\csname bursztyn.waldmann:2000bchairxnote\endcsname

	\bibitem {cattaneo.felder:2004a}
	\chairxauthorbibfont{Cattaneo, A.~S., Felder, G.:}\newblock
	\chairxtitlebibfont{Coisotropic Submanifolds in {P}oisson
	Geometry and Branes
		in the {P}oisson Sigma Model}.
	\newblock Lett. Math. Phys.  \textbf{69} (2004), 157--175.
	\par\csname cattaneo.felder:2004achairxnote\endcsname

	\bibitem {cattaneo.felder:2007a}
	\chairxauthorbibfont{Cattaneo, A.~S., Felder, G.:}\newblock
	\chairxtitlebibfont{Relative formality theorem and quantisation of
		coisotropic submanifolds}.
	\newblock Adv. Math.  \textbf{208} (2007), 521--548.
	\par\csname cattaneo.felder:2007achairxnote\endcsname

	\bibitem {dippell.esposito.waldmann:2019a}
	\chairxauthorbibfont{Dippell, M., Esposito, C., Waldmann,
	S.:}\newblock
	\chairxtitlebibfont{Coisotropic Triples, Reduction and Classical
	Limit.}
	\newblock Doc. Math.  \textbf{24} (2019), 1811--1853.
	\par\csname dippell.esposito.waldmann:2019achairxnote\endcsname

	\bibitem {dippell.esposito.waldmann:2020a}
	\chairxauthorbibfont{Dippell, M., Esposito, C., Waldmann,
	S.:}\newblock
	\chairxtitlebibfont{Deformation and Hochschild Cohomology of
	Coisotropic
		Algebras}   (2020).
	\newblock Preprint, arXiv:2008.03495.
	\par\csname dippell.esposito.waldmann:2020achairxnote\endcsname

	\bibitem {dirac:1964a}
	\chairxauthorbibfont{Dirac, P. A.~M.:}\newblock
	\chairxtitlebibfont{Lectures on
		Quantum Mechanics}.
	\newblock Belfer Graduate School of Science, Yeshiva University,
	New York,
	1964.
	\par\csname dirac:1964achairxnote\endcsname

	\bibitem {drager.lee.park.richardson:2012a}
	\chairxauthorbibfont{Drager, L.~D., Lee, J.~M., Park, E.,
	Richardson,
		K.:}\newblock \chairxtitlebibfont{Smooth distributions are
		finitely
		generated.}
	\newblock Ann. Global Anal. Geom.  \textbf{41}.3 (2012), 357--369.
	\par\csname drager.lee.park.richardson:2012achairxnote\endcsname

	\bibitem {fernandes:2002a}
	\chairxauthorbibfont{Fernandes, R.~L.:}\newblock
	\chairxtitlebibfont{Lie
		algebroids, holonomy and characteristic classes}.
	\newblock Adv. Math.  \textbf{170} (2002), 119--179.
	\par\csname fernandes:2002achairxnote\endcsname

	\bibitem {higgins.mackenzie:1990a}
	\chairxauthorbibfont{Higgins, P.~J., Mackenzie, K.:}\newblock
	\chairxtitlebibfont{Algebraic Constructions in the Category of Lie
		Algebroids}.
	\newblock Journal of Algebra  \textbf{129} (1990), 194--230.
	\par\csname higgins.mackenzie:1990achairxnote\endcsname

	\bibitem {jacobson:1989a}
	\chairxauthorbibfont{Jacobson, N.:}\newblock
	\chairxtitlebibfont{Basic Algebra
		{II}}.
	\newblock Freeman and Company, New York, 2. edition, 1989.
	\par\csname jacobson:1989achairxnote\endcsname

	\bibitem {johnstone:2002a}
	\chairxauthorbibfont{Johnstone, P.~T.:}\newblock
	\chairxtitlebibfont{Sketches
		of an Elephant: A Topos Theory Compendium - Volume 1\&2}.
	\newblock Clarendon Press, 2002.
	\par\csname johnstone:2002achairxnote\endcsname

	\bibitem {jotz.ortiz:2014a}
	\chairxauthorbibfont{Jotz~Lean, M., Ortiz, C.:}\newblock
	\chairxtitlebibfont{Foliated groupoids and infinitesimal ideal
	systems}.
	\newblock Indag. Math.  \textbf{25} (2014), 1019--1053.
	\par\csname jotz.ortiz:2014achairxnote\endcsname

	\bibitem {kamber.tondeur:1975a}
	\chairxauthorbibfont{Kamber, F., Tondeur, P.:}\newblock
	\chairxtitlebibfont{Foliated Bundles and Characteristic Classes},
	vol. 493 in
	\chairxseriesbibfont{Lecture Notes in Mathematics}.
	\newblock Springer-Verlag, Berlin, 1975.
	\par\csname kamber.tondeur:1975achairxnote\endcsname

	\bibitem {mackenzie:2005a}
	\chairxauthorbibfont{Mackenzie, K. C.~H.:}\newblock
	\chairxtitlebibfont{General
		Theory of {L}ie Groupoids and {L}ie Algebroids}, vol. 213 in
	\chairxseriesbibfont{London Mathematical Society Lecture Note
	Series}.
	\newblock Cambridge University Press, Cambridge, UK, 2005.
	\par\csname mackenzie:2005achairxnote\endcsname

	\bibitem {maclane:1998a}
	\chairxauthorbibfont{MacLane, S.:}\newblock
	\chairxtitlebibfont{Categories for
		the Working Mathematician}, vol.~5 in
		\chairxseriesbibfont{Graduate Texts in
		Mathematics}.
	\newblock Springer-Verlag, New York, Berlin, 2. edition, 1998.
	\par\csname maclane:1998achairxnote\endcsname

	\bibitem {nestruev:2003a}
	\chairxauthorbibfont{Nestruev, J.:}\newblock
	\chairxtitlebibfont{Smooth
		manifolds and observables}, vol. 220 in
		\chairxseriesbibfont{Graduate Texts
		in Mathematics}.
	\newblock Springer-Verlag, Heidelberg, Berlin, New York, 2003.
	\par\csname nestruev:2003achairxnote\endcsname

	\bibitem {wyler:1991a}
	\chairxauthorbibfont{Wyler, O.:}\newblock
	\chairxtitlebibfont{Lecture notes on
		topoi and quasitopoi}.
	\newblock World Scientific Publishing Co., Inc., Teaneck, NJ,
	1991.
	\par\csname wyler:1991achairxnote\endcsname

	\bibitem {zambon:2008a}
	\chairxauthorbibfont{Zambon, M.:}\newblock
	\chairxtitlebibfont{Reduction of
		branes in generalized complex geometry}.
	\newblock J. Symplectic Geom.  \textbf{6}.4 (2008), 353--378.
	\par\csname zambon:2008achairxnote\endcsname

\end{thebibliography}

}

%
%

\end{document}